\begin{document}

\newtheorem{tm}{Theorem}[section]
\newtheorem{prop}[tm]{Proposition}
\newtheorem{defin}[tm]{Definition} 
\newtheorem{coro}[tm]{Corollary}
\newtheorem{lem}[tm]{Lemma}
\newtheorem{assumption}[tm]{Assumption}
\newtheorem{rk}[tm]{Remark}
\newtheorem{nota}[tm]{Notation}
\numberwithin{equation}{section}

\newcommand{\stk}[2]{\stackrel{#1}{#2}}
\newcommand{\dwn}[1]{{\scriptstyle #1}\downarrow}
\newcommand{\upa}[1]{{\scriptstyle #1}\uparrow}
\newcommand{\nea}[1]{{\scriptstyle #1}\nearrow}
\newcommand{\sea}[1]{\searrow {\scriptstyle #1}}
\newcommand{\csti}[3]{(#1+1) (#2)^{1/ (#1+1)} (#1)^{- #1
 / (#1+1)} (#3)^{ #1 / (#1 +1)}}
\newcommand{\RR}[1]{\mathbb{#1}}

\newcommand{\rd}{{\mathbb R^d}}
\newcommand{\ep}{\varepsilon}
\newcommand{\rr}{{\mathbb R}}
\newcommand{\alert}[1]{\fbox{#1}}
\newcommand{\eqd}{\sim}
\def\p{\partial}
\def\R{{\mathbb R}}
\def\N{{\mathbb N}}
\def\Q{{\mathbb Q}}
\def\C{{\mathbb C}}
\def\l{{\langle}}
\def\r{\rangle}
\def\t{\tau}
\def\k{\kappa}
\def\a{\alpha}
\def\la{\lambda}
\def\De{\Delta}
\def\de{\delta}
\def\ga{\gamma}
\def\Ga{\Gamma}
\def\ep{\varepsilon}
\def\eps{\varepsilon}
\def\si{\sigma}
\def\Re {{\rm Re}\,}
\def\Im {{\rm Im}\,}
\def\E{{\mathbb E}}
\def\P{{\mathbb P}}
\def\Z{{\mathbb Z}}
\def\D{{\mathbb D}}
\newcommand{\ceil}[1]{\lceil{#1}\rceil}

\title{Analysis of a time-delayed HIV/AIDS epidemic model with education campaigns.
}

\author{
Dawit Denu\footnote{Department of Mathematics, Georgia Southern University, Savannah, GA 31419 (ddenu@georgiasouthern.edu)}\qquad   \quad \quad 
  Sedar Ngoma\footnote{Department of Mathematics, State University of New York at Geneseo, NY 14454 (ngoma@geneseo.edu)} \qquad \quad \quad
 Rachidi B. Salako\footnote{Department of Mathematics, University of Nevada at Las Vegas, Las Vegas, NV 89154(rachidi.salako@unlv.edu)}  }

\date{}
\maketitle

\begin{abstract}
We consider a time-delayed HIV/AIDS epidemic model  with education dissemination and study the asymptotic dynamics of solutions as well as the asymptotic behavior of the endemic equilibrium with respect to  the amount of information disseminated about the disease. Under appropriate assumptions on the infection rates, we show that if the basic reproduction number is less than or equal to one, then the disease will be eradicated in the long run and any solution to the Cauchy problem converges to the unique disease-free equilibrium of the model.  On the other hand, when the basic reproduction number is greater than one, we prove that the disease will be permanent but its impact on the population can be significantly minimized as the amount of education dissemination increases. In particular, under appropriate hypothesis on the model parameters, we establish that the size of the component of the infected population of the endemic equilibrium decreases linearly as a function of the amount of information disseminated. 
We also fit our model to a set of data on HIV/AIDS in order to estimate the infection, effective response, and information rates of the disease. We then use these estimates to present numerical simulations to illustrate our theoretical findings.
\end{abstract}




\section{Introduction}
The human immunodeficiency virus (HIV) is the virus which is responsible for causing acquired immunodeficiency syndrome (AIDS). The virus cannot replicate by its own and thus in order to reproduce it will hijack cells (CD4) that play a central role in responding to the invasion of infections in the body. The virus  can spread through some body fluids and once it is inside the body it can destroy the immune system, making the person too weak to fight other infections. The elevated viral load of someone recently infected with HIV is the main biological reason that they are more likely to transmit HIV to others. The higher the viral load, the greater the risk is of transmitting HIV to others. When the virus has destroyed a certain number of CD4 cells and the CD4 count drops below 200 cells per $mm^3$, a person will have progressed to AIDS. Until now there is no vaccine to prevent HIV infection nor a cure for AIDS and if HIV is untreated earlier it will lead to AIDS in about 8 to 10 years. However, there are some medications and antiviral treatments to slow  down the progression of the disease \cite{ragni1992effect}. According to UNAIDS, in 2018 there were approximately 38 million people with HIV/AIDS worldwide. Out of these many people the majority ($95.5\%$) were adults and the remaining were children of age less than 15 years old. The use and access to antiretroviral therapy (ART) has been gradually improved and  in 2018 close to 23 million people with HIV were using this treatment. Despite the fact that HIV/AIDS is not one of the leading causes of death in developed countries due to efficient HIV prevention and treatment techniques, it remains a leading cause of death for certain age groups and people living in developing countries. At the international AIDS conference in Durban, South Africa, in July 2016, the world health organization (WHO) reported four key challenges caused by the HIV/AIDS pandemic  \cite{friedland2016marking,hakim2016oral}. These are the need to renew attention to HIV prevention, scaling up access to HIV treatments, the growing emergence of antiretroviral drug resistance, and the need for sustainable financing of the global response.

In the absence of effective medical procedures to prevent or cure HIV infection and AIDS, experts have repeatedly emphasized education as the main means to prevent the spread of the infection. These may include dissemination of information on how the infection is transmitted from one person to another, fostering social support, helping people to change certain risky behaviors such as unsafe sex, sharing needles or syringes and others \cite{walque2004does,bhunu2011mathematical,nyabadza2006mathematical,nyabadza2010analysis}. In addition to the aforementioned general strategies, different countries have been implementing various techniques to reduce  behaviors  that make individuals more vulnerable to becoming infected, or infecting others, with HIV. These interventions have generally aimed to increase the use of condoms, decrease the number of sexual partners, delay the start of sexual intercourse, encourage young persons to practice sexual abstinence, decrease sharing of needles, syringes, and substance use \cite{de2006gets,des1992international,kirby2007sex}. 

In order to understand the impact of preventive control strategies on the spread of HIV infection, various authors have been studying a number of mathematical models and showed that the different interventions mentioned above are very important to prevent a rampant HIV/AIDS transmission \cite{joshietal2008, velasco1994modelling, mukandavire2009modelling, mukandavire2007asymptotic}. Education and media campaign have a significant effect in changing the behavior of susceptible people by raising their awareness about infections such as HIV. In this regards,  Joshi et al. \cite{joshietal2008} proposed a HIV/AIDS epidemic model to study the effect of information campaign on the HIV epidemic in Uganda. Their simulation results predict that an adequate information campaign can successfully  reduce both prevalence and incidence of HIV infection. They considered SIRE epidemic model, where the first three letters represent the susceptible, infected and removed classes, respectively. The compartment E represents the information and education campaigns (IEC), which upon interaction with the susceptible group will change the behavior of the susceptible individual.   The authors have included two types of information campaigns, collectively known as ABC, that were practiced in Uganda to mitigate the power of HIV infection since 1992 \cite{joshietal2008}. The first strategy, abstinence and be faithful campaign (the AB behavior), was mainly changing people behavior based on a combination of ``risk avoidance" and harm reduction by encouraging young adults to delay sexual debut or eliminate casual or other concurrent sex partners to reduce exposure to HIV. The second option was to encourage people to practice safe-sex by the use of condom (the C behavior). As a result when the susceptible group S interacts with the information, then part of the group will change its behavior and practice the abstinence and be faithful behaviors to join the $S_{AB}$ group. Similarly, the interaction of the susceptible group with the information will result in changing the behavior of some of the people to use condom and as a result they will join the $S_C$ group. 

  One way to better capture the dynamics of HIV starting from the time of cell's infection to the time it produces new virions is to introduce a latency (or delay) time $u>0$. Time delay can arise for various practical reasons in epidemiology. For example,  pharmacological delay occurs between the ingestion of drug and its appearance within cells, while intracellular delay is the time between initial infection of a cell by HIV and the release of new virions. 
In this perspective, there are several HIV epidemic models that have been developed to study the effect of the delays on the overall dynamics of the infection \cite{culshaw2000delay, lutambi2016effect, mukandavire2007asymptotic, roy2013long, cai2009stability, naresh2011nonlinear}. In particular,  Z. Mukandavire et al. presented a mathematical model for HIV/AIDS with explicit incubation period as a system of discrete time delay differential equations \cite{mukandavire2007asymptotic}. They analyzed the stability of equilibrium points using the Lyapunov functional approach. Moreover, they investigated the positivity and boundedness of the solutions, and provided conditions for the permanence of the infection. 

Thus, motivated by the above discussion, due to the advantage of the positive effect of information and education campaigns to control the transmission of HIV/AIDS and the fact that delay models provide a more realistic dynamic of the infection, we propose and analyze a time-delayed HIV/AIDS model with the presence of information and education campaigns.

\subsection{Model formulation}
In this paper we consider a population of total size $N(t)$ at time $t$, with a constant recruitment rate of $B$. The population size $N(t)$ is divided into five compartments $S_0(t)$, $S_1(t)$,  $S_2(t)$, $I(t)$, and $R(t)$.  The $S_0$-class, also called the general susceptible class, denotes the sub-group of the population which are not yet infected but are susceptible to contract the disease upon interaction with infected people. Furthermore, the $S_0$-class individuals have direct access to information and education campaigns about the disease, which includes practicing abstinence and faithfulness and the use of condom. 
We denote by $Z(t)$ the amount of the educational information from the information and education campaigns (IEC).
 Thanks to the access to education campaigns, part of the $S_0$ population will change their behavior and join other compartments according to the type of information they receive and choose to practice.
The group $S_1(t)$ denotes those individuals who change their behavior and practice abstinence and faithfulness as a result of the information and education campaigns. Similarly, $S_2(t)$ represents the group that changes their behavior and started  practicing safe sex with the use of condoms. 
There are some individuals who are not only exposed to the information and education campaigns, but also used the provided information to the fullest that they will never get infected by HIV throughout their life. It is also known that a small proportion of humans show apparently complete inborn resistance to HIV \cite{Susan,Sarah}. The main mechanism is a mutation of the gene encoding CCR5, which acts as a co-receptor for HIV. These individuals are grouped under the removed compartment $R(t)$. Note that we assume that it is through natural death and infection that people can leave the $S_i$, $i=1,2$, groups. We also assume that it is only through natural death that people can leave the $R$-group.
We assume that the interaction between $S_0$ and $Z$ will have rates of $\gamma_0$ to go to the $R$ group, $\gamma_1$ to go to the $S_1$ group, and $\gamma_2$ to go to the $S_2$ class. 

The density of information $Z(t)$ is assumed to be proportional to the number of regional and global organizations providing information and educational campaigns on HIV prevention, and the number of such organizations is in general proportional to the number of infections in the population.
From a more realistic perspective, one of the powerful ways to combat infectious diseases is to make sure that the population is well informed about the transmission mechanisms of infectious diseases. To accomplish this task, health authorities provide information and education campaigns within a given population of interest. Hence the inclusion of the information and education campaigns,  $Z(t)$, in our model. These campaigns are so important because they help spread a word throughout the population, change individuals behaviors for the best, and prevent the population from contracting the disease. For the case of HIV/AIDS, the World Health Organization (WHO) and the Center for Disease Control and Prevention (CDC) both have outlined important intervention strategies to use in order to protect against HIV/AIDS. These include but are not limited to abstinence (not having sex), never share needles, using condom the right way every time one has sex, being faithful, and the use of antiretroviral drugs (ARVs) for prevention such as pre-exposure prophylaxis or PrEP, which is medicine people at risk for HIV take to prevent getting HIV from sex or injection drug use, and post-exposure prophylaxis or PEP, which is medicine to take to prevent HIV after a possible exposure. Motivated by this short description, our model assumed the population is exposed to information and education campaigns and consequently changes its behavior according to either practicing abstinence and being faithful or using condom  \cite{belle2019behavioral, margevicius2013influence, Fan1993QuantitativeEF}.

We suppose that there is intracellular constant time delay $u\ge 0$ between the initial infection of cells by HIV and when a cell becomes infectious. Time delay is very important especially for an application point of view.  Indeed, according to the CDC,  getting and keeping a low undetectable viral load prevents HIV transmission during sex. Antiretroviral therapy can help  keep the viral load very low, called viral suppression;  and thereby creating some delay in the time of being infectious for newly infected HIV patients. This time delay is vital because it could lead to lower the basic reproduction number of the disease and consequently reduces significantly its impact on the population. To illustrate, in Section~\ref{numerics}, the simulations indicate that as the time delay increases, the infection rate decreases. This is an indication to suggest that if we are able to find ways to increase the intracellular time delay between the initial infection of cells by HIV and when those cells become infectious, we would be able to prevent many HIV infections.   

The infection transmission rates upon interaction with the infected group $I(t)$ are denoted by $\beta_{0,u},\beta_{1,u}$, and $\beta_{2,u}$  and correspond respectively, to  the groups $S_0, S_1$, and $S_2$. 
In addition to the  natural death rate $\mu$, which is the same for all subgroups, we denote  by $d>0$  the HIV/AIDS induced death rate on the infected population.  A schematic relationship between model classes is provided in Figure~\ref{fig:flowchart}.
\begin{figure}[ht]
\centering
\includegraphics[width=.8\textwidth,height=.3\textheight]{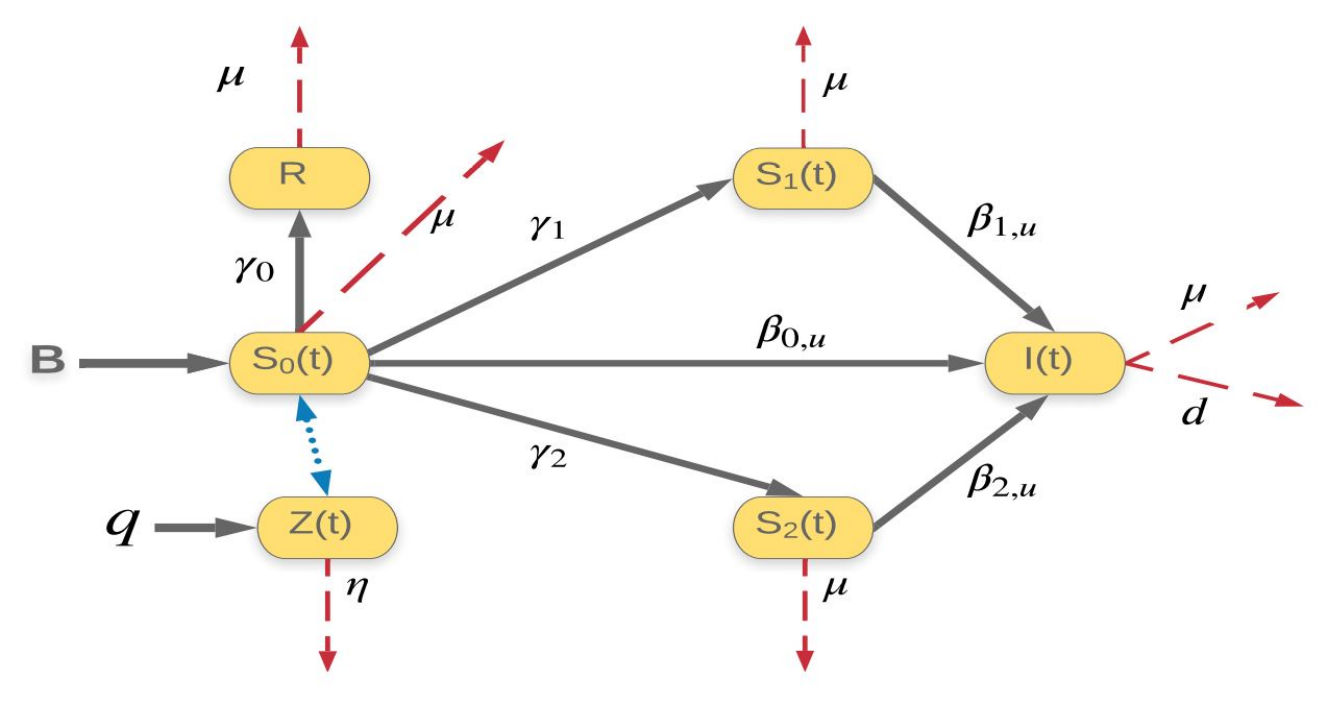}
\caption{A flow diagram describing the dynamics of HIV/AIDS epidemic model where $S_0(t), S_1(t), \text{ and }S_2(t)$ are the different susceptible groups of the human population, $I(t) \text{ and } R(t)$ are the number of infected and removed groups, respectively. Also, $Z(t)$ is the information density at time $t$, and the broken blue arrow indicates the interaction of the $S_0(t)$ group with the information $Z(t)$ to produce $S_1(t)$, $S_2(t)$, and $R(t)$. The solid arrows represent transitions between the different epidemiological classes, whereas the red broken arrows represent the death rates of the various groups in the model. }
\label{fig:flowchart}
\end{figure}

Table~\ref{tab:table1} provides a summary description of the model parameters. The values of the parameters in Table~\ref{tab:table1} together with the parameters to be estimated are provided in Section~\ref{sec3}.
\begin{table}[htbp]
 \caption{A list of model parameters and their interpretations}
    \centering
    \begin{tabular}{|c||c|}\hline
    Symbols  & Description \\\hline
 $u$ & Intracellular time delay \cr
    $\gamma_0, \gamma_1, \gamma_2$ & Effective response rates of $S_0(t), S_1(t), S_2(t) $ \\
    $\beta_{0,u}, \beta_{1,u}, \beta_{2,u}$    & Infection transmission rates of $S_0(t), S_1(t), S_2(t) $ \\
     $\mu$ & Natural death rate of $S_0(t), S_1(t), S_2(t), I(t), R(t)$ \\
    $d$ & Death rate due to the disease \\
    $q$ & Rate of increase of information w.r.t. $I(t)$ \\
    $\eta$ & Death rate of information\\
    $B$ &  Total rate of entry into  $S_0(t)$
    \\\hline
    \end{tabular}
    \label{tab:table1}
\end{table}

Taking into account the above considerations, the model dynamics is assumed to be governed by the following system of nonlinear 
delay differential equations
\begin{equation}\label{e-0}
\begin{cases}
\frac{d}{dt} S_{0}(t)=  B-\sum_{j=0}^{2} \gamma_{j} S_0(t) Z(t) -\beta_{0,u} S_{0}(t) I(t-u)  -\mu S_{0}(t),\cr
\frac{d}{dt}  S_{j}(t)= \gamma_{j} S_{0}(t) Z(t)-\beta_{j,u} S_{j}(t) I(t-u) -\mu S_{j}(t), \quad j =1, 2, \cr
\frac{d}{dt} Z(t)=q I(t)- \eta Z(t),\cr
\frac{d}{dt} I(t)=\sum_{j=0}^{2} \beta_{j,u} S_{j}(t)I(t-u)  -\mu I(t)-d I(t), \cr
\frac{d}{dt}R (t) = \gamma_0 S_0(t) Z(t) - \mu R(t).
\end{cases}
\end{equation}
 
A brief description of this model follows. We assume that  the  total  rate of entry into the general susceptible group $S_0(t)$ is $B$. The interactions of general susceptible individuals in class $S_0$ with information $Z$ will result in some $S_0$ individuals to move to the $R$ compartment with rate of movement of $\gamma_0$. Some part of the $S_0$ individuals will go from $S_0$ group to the $S_1$ and $S_2$ compartments with rates of movement of $\gamma_1$ and $\gamma_2$, respectively. Moreover, the interactions of $S_0$, $S_1$, and $S_2$ individuals with infectious individuals $I$ will result in those individuals to contract the disease and move to the infectious compartment $I$ with transmission rates $\beta_{0,u}$, $\beta_{1,u}$, and $\beta_{2,u}$, respectively. Furthermore, individuals in all compartment are subject to natural death with the same rate of $\mu$. The infectious individuals can die due to the disease at rate $d$. Finally, for the $Z$ compartment equation, we assume that the change in information within a population is proportional to the number of organizations providing information and education campaigns. The negative term $-\eta$ accounts for the lack of information due to limitation in resources. In fact, population areas with more infections should be subject to more information and education campaigns, and areas with less infection would be provided a corresponding level of information and education campaigns \cite{belle2019behavioral, margevicius2013influence, Fan1993QuantitativeEF}. This justifies our assumption that the growth rate $q$ of education campaign is directly proportional to the number of infected population.   
 
Furthermore, the nonlinear terms in the model such as $\beta_{j,u} S_{j}(t) I(t-u), \quad j= 0, 1, 2$ are obtained using the simple mass action law that assumes the infection transmission rates increases linearly with the population size. Since $\beta_{j,u}, \, j= 0, 1, 2$ represents the infection transmission rates of the $S_j$ compartment, then $\beta_{j,u} S_{j}(t)$ is the total number of contacts of the susceptible group with one infective per unit time. Thus $\beta_{j,u} S_{j}(t) I(t-u)$ will be the total number of contacts of of susceptibles with the
entire infective class per unit time. If needed the simple mass action used in this model can be easily modified to other incidence types such as standard, saturated or nonlinear incidence rates.  
 It is worth mentioning that the standard incidence rate is more suitable to model sexually transmitted infectious diseases. In this work, we use the mass action incidence in our model because it provides a good fit between data and model simulations (see section \ref{sec3}).   
 
It is the aim of the current work to study the dynamics of solutions of \eqref{e-0}. We also present numerical simulations to illustrate our theoretical findings.

\subsection{Statement of main results}
In this subsection we state our main results. We observe that the  first five equations of \eqref{e-0} decouple from the last equation. Furthermore, we note that the solution to the last equation in \eqref{e-0} is completely determined by the solution of the decouple sub-system formed by the first five equations. For this reason, we shall focus on the  study of the dynamics of the sub-system
\begin{align}\label{e-0-1}
\begin{cases}
\frac{d}{dt} S_{0}(t)  =  B-\sum_{j=0}^{2} \gamma_{j} S_0(t) Z(t) -\beta_{0,u} S_{0}(t) I(t-u)  -\mu S_{0}(t),\cr
\frac{d}{dt}  S_{j}(t)= \gamma_{j} S_{0}(t) Z(t)-\beta_{j,u} S_{j}(t) I(t-u) -\mu S_{j}(t), \quad j =1, 2,\cr
\frac{d}{dt} Z(t) = q I(t)- \eta Z(t),\cr 
\frac{d}{dt} I(t)=\sum_{j=0}^{2} \beta_{j,u} S_{j}(t)I(t-u)  -\mu I(t)-d I(t),  \cr
\end{cases}
\end{align}
where $q,\eta,\mu,d,B>0$, $\beta_{j,u}>0$ for every $j\in\{0,1,2\}$ and $\gamma_j>0$ for every $j\in\{0,1,2\}$, and  subject to the initial conditions 
\begin{equation}\label{initial-cond}
    (S_0(0),S_1(0),S_2(0),Z(0))\in \mathbb{R}_{+}^{4},\ S_0(0)>0,\ I_0(\cdot)\in C([-u,0]:\mathbb{R}_+),
\end{equation}
where $\mathbb{R}_+=\{x\in\mathbb{R}\,:\,x\ge0\}$. Recall that $u\ge 0$ is the time delay. We endow $ C([-u,0]:\mathbb{R}_+)$ with the sup-norm $\|\phi\|_{\infty}=\sup_{-u\leq s\le 0}|\phi(s)|$. Due to biological interpretation, we suppose that the infections rates $\beta_{j,u}$ is a non-increasing function of $u$ for every $j=0,1,2$. 
By standard theory on Delay-Differential Equations,  \eqref{e-0-1} subject to initial conditions satisfying \eqref{initial-cond} has a unique non-negative solution, which is defined for all $t\ge 0$ (see Section \ref{sec0}). Moreover, with the notations $ S(t):=\sum_{j=0}^2S_j(t)$ and $N(t):=S(t)+I(t)$, it holds that 
$$ 
\frac{d}{dt}N(t)= B-\mu N(t)-dI(t)-\gamma_0Z(t)S_0(t)\leq B-\mu N(t), \ t>0.
$$
Hence, it follows from comparison principle for ODEs that 
\begin{equation}\label{x-0}
\limsup_{t\to\infty}N(t)\le \frac{B}{\mu}\ \text{and}\     N(t)\leq \max\ \left\{N(0),\frac{B}{\mu}\ \right\}, \forall \ t\ge 0.
\end{equation}
Thus, using again the comparison principle for ODEs, we get 
\begin{equation}\label{x0-1}
\limsup_{t\to\infty}Z(t)\leq \frac{q}{\eta}\limsup_{t\to\infty}I(t)\leq \frac{qB}{\eta\mu} \quad \text{and}\quad Z(t)\leq \max\ \left\{Z(0), \frac{q}{\eta}\max\ \left\{N(0),\frac{B}{\mu}\ \right\} \right\}.
\end{equation} These show that the set  
$$
\mathcal{X}_u=\left\{ (S_0,S_1,S_2,Z,I(\cdot))\in R^4_+\times C([-u,0]:\mathbb{R}_+)\ : \ \sum_{j=0}^2S_j+I(0)\leq \frac{B}{\mu} \ \text{and}\ Z\le \frac{qB}{\mu\eta}  \right\}
$$ is forward invariant for the flow generated by solutions of \eqref{e-0-1}. Observe that when $I(s)\equiv 0$  on $[-u,0]$ then $I(t)\equiv 0$ for all $t\ge 0$ and $Z(t)=Z(0)e^{-\eta t}\to 0$ as $t\to\infty$. In which case it is also easily seen that $(S_0(t),S_1(t),S_2(t))\to (\frac{B}{\mu},0,0)$ as $t\to\infty$.  Hence, due to biological interpretations, we shall always suppose that our initial conditions belong to $\mathcal{X}_u$ with the additional properties that $I(s)>0$ for every $s\in[-u,0]$ and $S_0(0)>0$. 

It is easily seen that ${\bf E^0} = \left(\frac{B}{\mu},0, 0, 0, 0 \right)^T$  is always an equilibrium solution of \eqref{e-0-1}. The equilibrium state ${\bf E^0}$ corresponds to the disease-free equilibrium.  
 We note that any equilibrium solution ${\bf \tilde{E}}=(\tilde{S}_0,\tilde{S}_1,\tilde{S}_2,\tilde{Z},\tilde{I})^T$  of \eqref{e-0-1} is uniquely determined by its $\tilde{I}$ coordinate as the remaining components are given by the formulas
\begin{equation}\label{equilibrium-def}
    \tilde{S}_0=\frac{B}{\mu+b_{0,u}\tilde{I}},\quad \tilde{Z}=\frac{q\tilde{I}}{\eta}, \quad 
    \text{and}\quad \tilde{S}_j=\frac{q\gamma_j\tilde{I}B}{\eta(\mu+\beta_{j,u}\tilde{I})(\mu+b_{0,u}\tilde{I})},\ j=1,2,
\end{equation}
where $\gamma:=\sum_{j=0}^2\gamma_j$ and  $b_{0,u}= \beta_{0,u}+\frac{q}{\eta}\gamma$. Any equilibrium solution ${\bf \tilde{E}}=(\tilde{S}_0,\tilde{S}_1,\tilde{S}_2,\tilde{Z},\tilde{I})^T$  of \eqref{e-0-1} for which $\tilde{I}>0$ will be called an endemic equilibrium.

We note that when \eqref{e-0-1} is linearized at the disease free equilibrium ${\bf E^0}$, $I(t)$ satisfies 
$$ 
I_t=\beta_{0,u}\frac{B}{\mu}I(t-u)-(\mu+d)I.
$$
The coefficient $\frac{\beta_{0,u}B}{\mu}$ is then the transition rate of new infected individual while $\mu+d$ is the transition rate from the infected group at the disease-free equilibrium.  Thus, using the next generation matrix method \cite{Diek}, the basic reproduction number is given by 
\begin{equation}\label{R0}
\mathcal{R}_{0,u} = \frac{B\beta_{0,u}}{\mu(\mu + d)}.
\end{equation}
It is well known that the existence of equilibrium solutions and their stability are closely related to the  value of the basic reproduction number $\mathcal{R}_{0,u}$.

To simplify notations in our proofs, we introduce the following new quantity 
\begin{equation}
    \mathcal{D}_0=\frac{\mu+d}{B/\mu},
\end{equation}
which is the ratio of the death rate $\mu+d$ of the infected class (i.e., the rate at which an individual leaves the infective class) by the total size of the population $\frac{B}{\mu}$ when at the disease-free equilibrium. In other words, $\mathcal{D}_0$ represents the per capita death rate of an infective individual in an entirely susceptible population. We will refer to $\mathcal{D}_0$ as the {\it critical death rate} of the model ~\eqref{e-0}.  Observe that $\mathcal{R}_{0,u}\le 1$ if and only if $\beta_{0,u}\le\mathcal{D}_0$ and $\mathcal{R}_{0,u}> 1$ if and only if $\beta_{0,u}>\mathcal{D}_0$.

For this reason, we will use $\mathcal{D}_0$ in the remainder of this paper. Theorem \ref{tm-stability-of-DFE} and Theorem \ref{tm-existence-of-EE} below are good illustrations that justify the introduction of the {\it critical death rate} $\mathcal{D}_0$.

\medskip

\begin{tm}[Stability of the disease-free equilibrium ${\bf E^0}$]\label{tm-stability-of-DFE}
For every time delay $u\ge 0$, the disease-free equilibrium ${\bf E^0}$ is linearly stable if and only if $\beta_{0,u}<\mathcal{D}_0$. Furthermore if $\max\{\beta_{0,u},\beta_{1,u},\beta_{2,u}\}\le \mathcal{D}_0$ then the disease-free equilibrium is globally stable.
\end{tm}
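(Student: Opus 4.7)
The plan is to address the two halves of the theorem separately. For the linear stability claim I linearize \eqref{e-0-1} at $\mathbf{E^0}=(B/\mu,0,0,0,0)^{T}$ and observe that, because the couplings into the $i$-component all vanish at $\mathbf{E^0}$ (the $i$-equation depends only on $i(t)$ and $i(t-u)$ at the linear level), the characteristic matrix is upper triangular in the ordering $(s_0,s_1,s_2,z,i)$; its determinant factors as
\begin{equation*}
\Delta(\lambda)\;=\;(\lambda+\mu)^{3}(\lambda+\eta)\Bigl[\lambda+(\mu+d)-\beta_{0,u}\tfrac{B}{\mu}e^{-\lambda u}\Bigr].
\end{equation*}
The first four factors contribute only the roots $-\mu$ and $-\eta$, so linear (asymptotic) stability of $\mathbf{E^0}$ reduces to showing that every root of $h(\lambda):=\lambda+(\mu+d)-\beta_{0,u}(B/\mu)e^{-\lambda u}$ has negative real part. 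Since $h(0)=\tfrac{B}{\mu}(\mathcal{D}_{0}-\beta_{0,u})$ and $h(\lambda)\to+\infty$ as $\lambda\to+\infty$ on $\R$, the case $\beta_{0,u}>\mathcal{D}_{0}$ produces a positive real root. Conversely, if $\beta_{0,u}<\mathcal{D}_{0}$ and $\lambda=a+ib$ were a root with $a\ge 0$, the inequality $|\lambda+(\mu+d)|\ge\mu+d>\beta_{0,u}B/\mu\ge|\beta_{0,u}(B/\mu)e^{-\lambda u}|$ would contradict $h(\lambda)=0$; the borderline $\beta_{0,u}=\mathcal{D}_{0}$ puts $\lambda=0$ in the spectrum and is correctly excluded.

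For the global claim, with $\beta^{*}:=\max\{\beta_{0,u},\beta_{1,u},\beta_{2,u}\}\le\mathcal{D}_{0}$, I would work inside the forward-invariant set $\mathcal{X}_{u}$, which implies $S(t):=\sum_{j=0}^{2}S_j(t)\le B/\mu-I(t)$ for every $t\ge 0$. With the Lyapunov functional
\begin{equation*}
V(t)\;=\;I(t)+(\mu+d)\int_{t-u}^{t}I(s)\,ds,
\end{equation*}
direct differentiation gives $\dot V(t)=\bigl[\sum_{j=0}^{2}\beta_{j,u}S_j(t)-(\mu+d)\bigr]I(t-u)$; using $\sum\beta_{j,u}S_j(t)\le\beta^{*}(B/\mu-I(t))$ together with $\beta^{*}B/\mu\le\mu+d$ yields $\dot V(t)\le -\beta^{*}I(t)I(t-u)\le 0$. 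Hence $V$ is non-increasing and bounded below. The LaSalle invariance principle for delay equations (applicable because forward orbits are precompact in $C([-u,0]:\R^{5})$ by the $\mathcal{X}_u$-bounds and the equicontinuity implied by~\eqref{e-0-1}) then places the omega-limit set inside the largest invariant subset of $\{\dot V\equiv 0\}$; since $\beta^{*}>0$, every entire orbit of this invariant set must satisfy $I(t)I(t-u)\equiv 0$ for all $t\in\R$.

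To close the argument I must upgrade the degenerate identity $I(t)I(t-u)\equiv 0$ to $I\equiv 0$ on such an entire orbit. On the open set $E=\{t:I(t)>0\}$ one has $I(t-u)=0$, hence $\dot I=-(\mu+d)I$ on $E$, so $I$ is a decaying exponential on each connected component: a component $(a,b)$ with $a>-\infty$ forces $I(a)=0$ by continuity and thus $I\equiv 0$ on $(a,b)$, contradicting $(a,b)\subset E$, while a component of the form $(-\infty,b)$ produces $I(t)=Ce^{-(\mu+d)t}$, unbounded backwards, again contradicting the $\mathcal{X}_u$-bound. Therefore $E=\varnothing$; cascading, $\dot Z=-\eta Z$ on $\R$ forces $Z\equiv 0$, $\dot S_j=-\mu S_j$ on $\R$ forces $S_j\equiv 0$ for $j=1,2$, and $\dot S_0=B-\mu S_0$ on $\R$ leaves only $S_0\equiv B/\mu$. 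Hence the omega-limit set of every trajectory equals $\{\mathbf{E^0}\}$ and global stability follows. I expect the main subtlety of the proof to be this LaSalle step: because $V$ only controls the product $I(t)I(t-u)$, one must rule out exotic invariant trajectories supported on the degenerate set $\{I(t)I(t-u)=0\}$, which the backward-in-time argument above accomplishes.
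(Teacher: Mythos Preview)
Your linear-stability argument is essentially the paper's: the same factorization $(\lambda+\mu)^3(\lambda+\eta)\bigl[\lambda+(\mu+d)-\beta_{0,u}\tfrac{B}{\mu}e^{-\lambda u}\bigr]$ and the same trichotomy on the transcendental factor. Your modulus estimate $|\lambda+(\mu+d)|\ge\mu+d>\beta_{0,u}B/\mu\ge|\beta_{0,u}(B/\mu)e^{-\lambda u}|$ for $\Re\lambda\ge0$ is in fact a crisper way of disposing of complex roots than the paper's brief Case~1.

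For global stability you take a genuinely different route. The paper does \emph{not} use a Lyapunov functional or LaSalle. Instead it proves an elementary comparison lemma: with $\beta^*=\max_j\beta_{j,u}$ and $\overline{I}=\limsup_{t\to\infty}I(t)$, the inequality $\dot I\le\beta^*(B/\mu-I)I(t-u)-(\mu+d)I$ together with $I(t-u)\le\overline{I}+\lambda$ for $t$ large gives, after a linear ODE comparison and letting $\lambda\to0$, the algebraic constraint
\[
\overline{I}\Bigl(\beta^*\overline{I}-\tfrac{B}{\mu}(\beta^*-\mathcal{D}_0)\Bigr)\le 0,
\]
which forces $\overline{I}=0$ whenever $\beta^*\le\mathcal{D}_0$. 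Convergence of $Z,S_1,S_2,S_0$ then cascades exactly as in your last paragraph. This argument is shorter and completely self-contained: no invariance principle, no analysis of entire orbits, and no backward-in-time blow-up step. Your Lyapunov functional $V=I+(\mu+d)\int_{t-u}^{t}I$ with $\dot V\le-\beta^*I(t)I(t-u)$ is correct and is a standard technique for delay equations; the price you pay is precisely the subtlety you flag, namely upgrading $I(t)I(t-u)\equiv0$ to $I\equiv0$ on bounded entire orbits, which you handle cleanly. Either approach works; the paper's buys brevity, yours buys a reusable Lyapunov structure.
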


By Theorem \ref{tm-stability-of-DFE}, there is no endemic equilibrium whenever  $\beta_{0,u}=\max\{\beta_{0,u},\beta_{1,u},\beta_{2,u}\}\le\mathcal{D}_0$. In reality, it is expected that when people start adopting some sort of protection behaviors to avoid contracting the disease, then the infection rate decreases as well. Then it is natural to suppose that $\beta_{0,u}>\max\{\beta_{1,u},\beta_{2,u}\}$.  However by curiosity, one might want to know from a mathematical point of view what happens if $\beta_{0,u}<\max\{\beta_{1,u},\beta_{2,u}\}$.   As we shall see from Theorem \ref{lem-1-0-0-0} $(ii)$ later, there is a range of parameters satisfying $\beta_{0,u}\leq \mathcal{D}_0<\max\{\beta_{1,u},\beta_{2,u}\}$ for which ${\bf E}^0$ is not asymptotically stable. In such case there are two endemic equilibrium solutions of \eqref{e-0-1}. 

Our next result is about the existence and uniqueness of the endemic equilibrium when  $\beta_{0,u}>\mathcal{D}_0$.

\medskip

\begin{tm}[Existence and uniqueness of endemic equilibrium point]\label{tm-existence-of-EE}
System \eqref{e-0-1} has a unique endemic equilibrium solution whenever $\beta_{0,u}>\mathcal{D}_0$.  

\end{tm}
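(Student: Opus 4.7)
The plan is to reduce the existence-and-uniqueness problem to a scalar equation in the infected coordinate $\tilde{I}$ alone, exploiting the formulas \eqref{equilibrium-def}. Any endemic equilibrium is uniquely determined by a positive $\tilde{I}$, and substituting \eqref{equilibrium-def} into the steady-state version of the $I$-equation (and cancelling one factor of $\tilde{I}>0$) leads to the scalar condition
\begin{equation*}
F(\tilde{I})=\mu+d,\qquad F(I):=\frac{B}{\mu+b_{0,u}I}\Bigl[\beta_{0,u}+\frac{q}{\eta}\sum_{j=1}^{2}\frac{\beta_{j,u}\gamma_{j}I}{\mu+\beta_{j,u}I}\Bigr].
\end{equation*}
So the task becomes: show that $F(I)=\mu+d$ has exactly one positive solution whenever $\beta_{0,u}>\mathcal{D}_{0}$.

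To handle this cleanly, I would clear the denominator $\mu+b_{0,u}I$ and rewrite the equation as $G(I)=H(I)$, where
\begin{equation*}
G(I):=\beta_{0,u}B-\mu(\mu+d)+\frac{qB}{\eta}\sum_{j=1}^{2}\frac{\beta_{j,u}\gamma_{j}I}{\mu+\beta_{j,u}I},\qquad H(I):=(\mu+d)\,b_{0,u}\,I.
\end{equation*}
Evaluating at $I=0$ gives $G(0)=\beta_{0,u}B-\mu(\mu+d)>0$ precisely because $\beta_{0,u}>\mathcal{D}_{0}=\mu(\mu+d)/B$, while $H(0)=0$. Since each fraction $\beta_{j,u}\gamma_{j}I/(\mu+\beta_{j,u}I)$ is bounded by $\gamma_{j}$, $G$ is bounded on $[0,\infty)$, whereas $H(I)\to\infty$, so $(G-H)(I)\to-\infty$. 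By the intermediate value theorem, $G-H$ has at least one zero in $(0,\infty)$, yielding existence of an endemic equilibrium. Positivity of the remaining coordinates is then automatic from \eqref{equilibrium-def}.

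The uniqueness step is where the main care is needed, and the key observation is a concavity argument. A direct computation shows that for each $j\in\{1,2\}$,
\begin{equation*}
\frac{d}{dI}\Bigl(\frac{\beta_{j,u}\gamma_{j}I}{\mu+\beta_{j,u}I}\Bigr)=\frac{\mu\,\beta_{j,u}\gamma_{j}}{(\mu+\beta_{j,u}I)^{2}},
\end{equation*}
which is strictly positive and strictly decreasing in $I\ge 0$. Hence $G$ is strictly increasing and strictly concave on $[0,\infty)$. Subtracting the linear map $H$, the function $\Phi:=G-H$ is strictly concave, satisfies $\Phi(0)>0$ and $\Phi(I)\to-\infty$. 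A strictly concave function with these endpoint behaviors has exactly one positive zero: either $\Phi'(0)\le 0$, in which case $\Phi$ is strictly decreasing and the zero is unique; or $\Phi'(0)>0$, in which case $\Phi$ increases up to its unique maximum and then strictly decreases to $-\infty$, again crossing zero exactly once in $(0,\infty)$. This uniqueness for $\tilde{I}$, combined with \eqref{equilibrium-def}, yields the unique endemic equilibrium.

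The main obstacle I anticipate is the uniqueness step: the function $F$ itself is not monotone in $I$ (its components have the form $I/[(\mu+aI)(\mu+bI)]$, which rise then fall), so a direct monotonicity argument on $F$ fails. Passing from $F(I)=\mu+d$ to the equivalent form $G(I)=H(I)$ is the decisive move, since it isolates the concave and linear pieces and lets the concavity of the simple rational fragments do the work.
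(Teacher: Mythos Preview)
Your proof is correct and, in fact, somewhat cleaner than the route taken in the paper. The paper reduces to the same scalar equation (your $F(I)=\mu+d$, their $G(I,\beta_0,\tau)=\frac{\mathcal{D}_0B}{\mu}$) but attacks it differently: rather than clearing the denominator, they analyze $F$ directly by computing $\partial_I F$ and showing the equation $\partial_I F=0$ can be rewritten as a strictly increasing function of $I$ equaling a constant, hence has at most one root $I_{\max}$. This makes $F$ unimodal (increasing then decreasing), and since $F(0)=\frac{\beta_{0,u}B}{\mu}>\mu+d$ under $\beta_{0,u}>\mathcal{D}_0$, the level $\mu+d$ is hit exactly once, on the descending branch. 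Your move of multiplying through by $\mu+b_{0,u}I$ trades this unimodality analysis for the strict concavity of $I\mapsto \frac{\beta_{j,u}\gamma_j I}{\mu+\beta_{j,u}I}$, which is more transparent and avoids the somewhat involved derivative computation in the paper. What the paper's approach buys, on the other hand, is extra structural information (the sign of $\partial_I F$ at the endemic root, the location relative to $I_{\max}$) that they reuse later for the implicit-function arguments in Theorems~\ref{prop-2} and~\ref{Th-5}; your concavity framing gives existence and uniqueness efficiently but would need to be supplemented if those later monotonicity and bifurcation statements were also on the table.
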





We first remark that Theorem \ref{tm-existence-of-EE} follows from Theorem \ref{lem-1-0-0-0} below. Theorem \ref{lem-1-0-0-0}, which is of independent interest, provides a complete picture of the existence of endemic solutions of \eqref{e-0-1}.  Suppose  $\beta_{0,u}>\mathcal{D}_0$ and let ${\bf E^*}=(S_0^*,S_1^*,S_2^*,Z^*,I^*)^T$ denotes the unique endemic equilibrium solution of \eqref{e-0-1} where $S_0^*$, $S_1^*$, $S_2^*$ and  $Z^*$ are given by \eqref{equilibrium-def} with $\tilde{I}=I^*$. The existence of $I^*$ is uniquely determined by the unique positive solution of the algebraic equation
\begin{equation}\label{I-star}
    G(I)=\mu+d=\frac{B\mathcal{D}_0}{\mu},
\end{equation} 
where \begin{equation}\label{g-defi-1}
G(I):=\frac{\beta_{0,u}B}{\mu +b_{0,u}I} +\frac{Bq}{\eta}\sum_{j=1}^2\frac{\beta_{j,u}\gamma_jI}{(\mu +\beta_{j,u}I)(\mu+b_{0,u}I)}.
\end{equation}
 It is clear that an explicit expression is very difficult to derive for $I^*$. This makes the analysis of the endemic equilibrium solution more difficult. In fact, the study of the sign of the real parts of the roots of the characteristic equation of the linearized system at ${\bf E}^*$, see \eqref{p-0}, is more complex due to its form. 
 A plot of the nonlinear function $G(I)$ and the line $y = \mu + d$ together with the value of $I^*$, their intersection point, is given in Figure~\ref{fig:Istar}. Note that this graph has been produced with parameter values estimated in Section~\ref{sec3}.
\begin{figure}[htpb]
    \centering
    \includegraphics[width=.8\textwidth,height=.3\textheight]{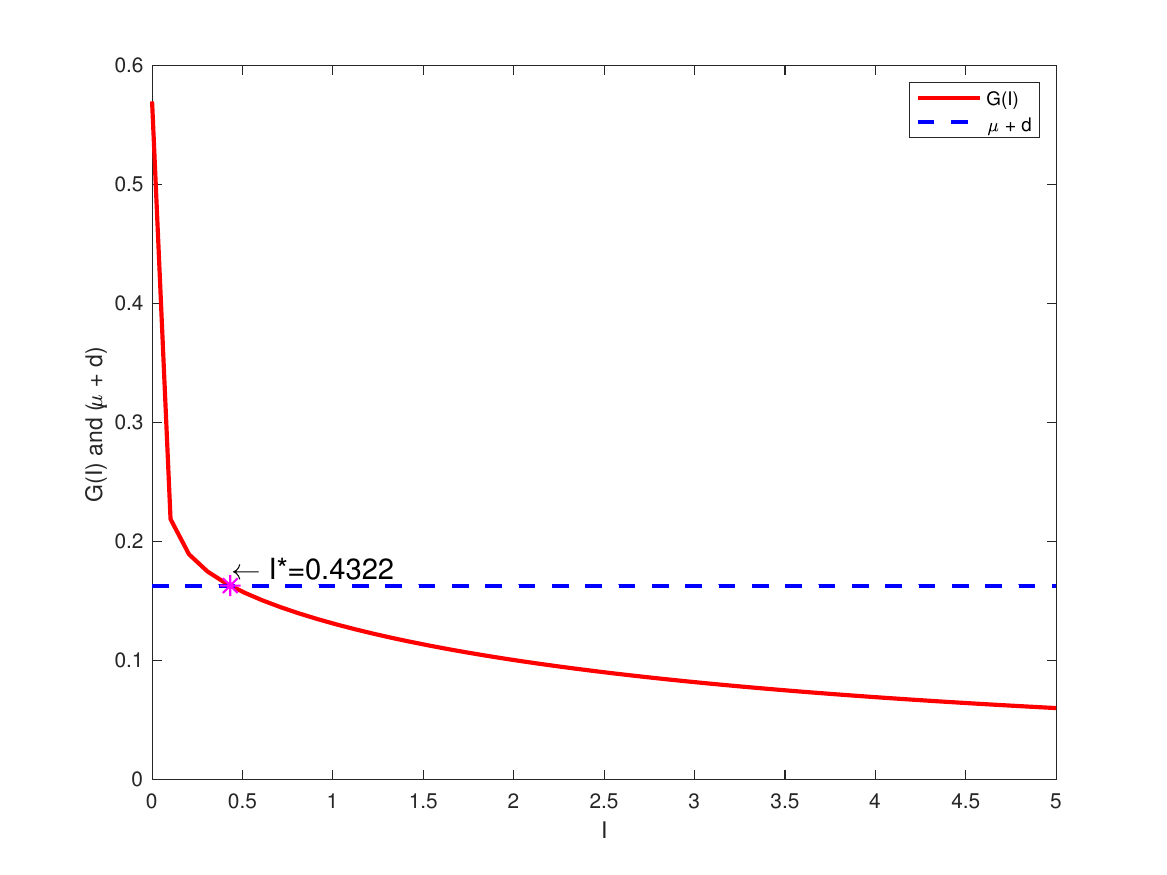}
    \caption{Graphs of $G(I)$ and $\mu + d$ together with their intersection point, $I^*$.}
    \label{fig:Istar}
\end{figure}

We note that while the expression of the disease-free equilibrium depends only on the total rate of entry to the $S_0$ group  and the natural death rate $\mu$, the endemic equilibrium however is a function of all the parameters of the model. 
 In reality, health officials are concerned with finding ways to control the outbreak of a disease so as to minimize its impact on the population, and if possible completely eradicate it. In most cases, the concerned population is  provided with more information on the causes and ways to limit infections. Whence it becomes relevant to know how the parameters of the model affect the size of the infected class. In particular,  studying  the effect of the information or education dissemination on the control of the disease is of interest. Theorem \ref{tm-stability-of-DFE} shows that the (local) stability of the disease-free equilibrium is independent of the amount of education  available about the disease, and the different infections ($S_1$ and $S_2$ classes) and education rates. In order to provide some information on the effect of education on the control of the disease, we decide to study how the magnitude of the component $I^*$ of the endemic equilibrium solution, when exists, is affected by the change in populations behavior due to education dissemination. To this end we introduce the quantity
$$ 
\tau=\frac{q}{\eta}
$$
which measures the ``{\it effective rate of education dissemination}" in the sense that $\tau>1$ means that there is sufficient information about the disease relative to the available resources and,  $\tau<1$ means that there is less education relative to the available resources. As a result, it becomes natural to study the dependence of $I^*$ with respect to $\tau$ when  $\beta_{0,u}>\mathcal{D}_0$. 
The following result holds.

\begin{tm}[Asymptotic behavior of $I^*$ with respect to $\tau=\frac{q}{\eta}$]\label{prop-2}
  Let $u\ge 0$ and suppose that $\beta_{0,u}=\max\{\beta_{j,u} : j=0,1,2 \}>\mathcal{D}_0$, and let ${\bf E^*}=(S_0^*,S_1^*,S_2^*,Z^*,I^*)^T$ denote the unique endemic equilibrium solution of \eqref{e-0-1}. The following conclusions hold. 
 \begin{itemize}
     \item [(i)]
  $I^*$ is non-increasing  with respect to $\tau=\frac{q}{\eta}$ and 
  \begin{equation}\label{asymp-of-I-star-1}
     \frac{B(\beta_{0,u}-\mathcal{D}_0)}{(\mu+d)(\beta_{0,u}+\frac{q\gamma}{\eta})}  \leq I^*  \le \min\left\{\frac{B(\beta_{0,u}-\mathcal{D}_0)}{\mu\beta_{0,u}},\frac{\mu}{\mathcal{D}_0}\right\}.
  \end{equation}
  Furthermore, if $\beta_{0,u}>\min\{\beta_{1,u},\beta_{2,u}\}$ or $\gamma_0>0$   then $I^*$ is strictly monotone decreasing with respect to $\tau=\frac{q}{\eta}$. 
  \item[(ii)] If  $  \mathcal{D}_0\geq \frac{1}{\gamma}\sum_{j=1}^2\beta_{j,u}\gamma_j$ 
  then 
 \begin{equation}\label{asymp-of-I-star}
     \lim_{\frac{q}{\eta}\to\infty}I^*=0.
 \end{equation}
 In addition, if $\mathcal{D}_0> \frac{1}{\gamma}\sum_{j=1}^2\beta_{j,u}\gamma_j$  
 then there is a positive constant $C>0$ such that 
 \begin{equation}\label{asymp-of-I-star-3}
   \frac{B(\beta_{0,u}-\mathcal{D}_0)}{(\mu+d)(\beta_{0,u}+\frac{q\gamma}{\eta})} \leq    I^*\leq \frac{C}{q/\eta} \quad \quad \forall\ \frac{q}{\eta}>0.
 \end{equation}
 \item[(iii)] If  $ \mathcal{D}_0<\frac{1}{\gamma}\sum_{j=1}^2\beta_{j,u}\gamma_j$, 
 then 
 \begin{equation}
     \lim_{\frac{q}{\eta}\to\infty}I^*=I^*_{\infty}>0
 \end{equation}
 where $I^*_\infty$ is the unique positive solution of the equation $ 
 \mathcal{D}_0=\frac{\mu}{\gamma}\sum_{j=1}^2\frac{\beta_{j,u}\gamma_j}{\mu+\beta_{j,u}I^*_\infty}$. 
 \end{itemize}
\end{tm}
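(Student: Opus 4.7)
The plan is to work throughout with the scalar fixed-point equation $G(I^*)=\mu+d$ from \eqref{I-star}--\eqref{g-defi-1}, viewing $\tau:=q/\eta$ as a parameter. Clearing the denominator $\mu+b_{0,u}I$ (where $b_{0,u}=\beta_{0,u}+\tau\gamma$) yields the cleaner identity
\begin{equation}\label{eq:plan-cleared}
(\mu+d)(\mu + b_{0,u}I^*) = \beta_{0,u}B + B\tau\sum_{j=1}^{2} \frac{\beta_{j,u}\gamma_j I^*}{\mu + \beta_{j,u}I^*},
\end{equation}
and the entire theorem reduces to an analysis of \eqref{eq:plan-cleared} and its $\tau\to\infty$ limit.

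For part (i), I would differentiate $G$ in $\tau$ at fixed $I$; after collecting terms, the bracket factor in $\partial G/\partial\tau$ simplifies to $\mu\sum_{j=1}^{2}\gamma_j(\beta_{j,u}-\beta_{0,u})/(\mu+\beta_{j,u}I)-\beta_{0,u}\gamma_0$, which is nonpositive under the hypothesis $\beta_{0,u}=\max\{\beta_{0,u},\beta_{1,u},\beta_{2,u}\}$, and strictly negative once $\gamma_0>0$ or some $\beta_{j,u}<\beta_{0,u}$. Since $G(\cdot;\tau)$ is strictly decreasing (established before the theorem), the implicit function theorem gives the stated (strict) monotonicity of $I^*$ in $\tau$. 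For the lower bound in \eqref{asymp-of-I-star-1}, drop the nonnegative sum in \eqref{eq:plan-cleared} to obtain $(\mu+d)(\mu+b_{0,u}I^*)\geq\beta_{0,u}B$, solve for $I^*$, and simplify using $B\mathcal{D}_0=\mu(\mu+d)$. The first upper bound $I^*\leq\mu/\mathcal{D}_0=B/(\mu+d)$ follows by summing the equilibrium equations of \eqref{e-0-1}: $\mu S^*+(\mu+d)I^*+\gamma_0 S_0^*Z^*=B$. The remaining bound $I^*\leq B(\beta_{0,u}-\mathcal{D}_0)/(\mu\beta_{0,u})$ combines $\sum_j\beta_{j,u}S_j^*=\mu+d$ with $\beta_{j,u}\leq\beta_{0,u}$ to get $S^*\geq(\mu+d)/\beta_{0,u}$, then subtracts from $S^*+I^*\leq B/\mu$.

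For parts (ii) and (iii), I would divide \eqref{eq:plan-cleared} by $\tau$. By (i), $\tau\mapsto I^*(\tau)$ is bounded and monotone and thus admits a limit $\bar I\geq 0$; sending $\tau\to\infty$ in the divided identity yields $B\sum_{j=1}^{2}\beta_{j,u}\gamma_j\bar I/(\mu+\beta_{j,u}\bar I)=(\mu+d)\gamma\bar I$. So either $\bar I=0$ or $\bar I$ solves $\frac{\mu}{\gamma}\sum_{j=1}^{2}\beta_{j,u}\gamma_j/(\mu+\beta_{j,u}\bar I)=\mathcal{D}_0$, which is precisely the equation defining $I^*_\infty$. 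Since the left-hand side of the latter is strictly decreasing in $\bar I$ from $\gamma^{-1}\sum\beta_{j,u}\gamma_j$ at $\bar I=0$ down to $0$ at infinity, a positive root $I^*_\infty$ exists iff $\mathcal{D}_0<\gamma^{-1}\sum\beta_{j,u}\gamma_j$ (case (iii)). In case (ii) no positive root exists, forcing $\bar I=0$ and giving \eqref{asymp-of-I-star}. In case (iii), one must additionally rule out $\bar I=0$: for each $\epsilon\in(0,I^*_\infty)$ one checks $G(\epsilon;\tau)\to\tilde G(\epsilon):=(B/\gamma)\sum\beta_{j,u}\gamma_j/(\mu+\beta_{j,u}\epsilon)>\mu+d$ as $\tau\to\infty$, so $G(\epsilon;\tau)>\mu+d$ for $\tau$ large; since $G(\cdot;\tau)$ is decreasing, this forces $I^*(\tau)>\epsilon$, and $\epsilon\nearrow I^*_\infty$ gives $\bar I\geq I^*_\infty$, whence $\bar I=I^*_\infty$.

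Finally, for the $O(1/\tau)$ rate in the strict version of case (ii), I would rearrange \eqref{eq:plan-cleared} into
\begin{equation*}
I^*\Bigl[(\mu+d)\gamma-B\sum_{j=1}^{2}\frac{\beta_{j,u}\gamma_j}{\mu+\beta_{j,u}I^*}\Bigr]=\frac{1}{\tau}\Bigl[B(\beta_{0,u}-\mathcal{D}_0)-(\mu+d)\beta_{0,u}I^*\Bigr].
\end{equation*}
The bracket on the left is nondecreasing in $I^*\geq 0$ and, at $I^*=0$, equals $(B\gamma/\mu)(\mathcal{D}_0-\gamma^{-1}\sum\beta_{j,u}\gamma_j)$, strictly positive by hypothesis; bounding the right-hand side above by $B(\beta_{0,u}-\mathcal{D}_0)/\tau$ then yields \eqref{asymp-of-I-star-3} with $C=\mu(\beta_{0,u}-\mathcal{D}_0)/[\gamma(\mathcal{D}_0-\gamma^{-1}\sum\beta_{j,u}\gamma_j)]$. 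The main obstacle is the limit argument in (iii): one must leverage the pointwise (in fact uniform on compact subintervals of $(0,\infty)$) convergence $G(\cdot;\tau)\to\tilde G$ to transfer the intermediate-value reasoning from $\tilde G$ back to $G(\cdot;\tau)$ for all sufficiently large $\tau$, so that the limit $\bar I$ is not mistakenly collapsed to $0$.
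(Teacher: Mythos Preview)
Your proposal is correct and follows essentially the same strategy as the paper's own proof: compute $\partial_\tau G$ and use the implicit function theorem (together with $\partial_I G<0$ at $I^*$, supplied by Theorem~\ref{lem-1-0-0-0}(iv)) for the monotonicity in (i); drop or bound terms in the cleared equation \eqref{eq:plan-cleared} for the two-sided estimate \eqref{asymp-of-I-star-1}; and pass to the limit $\tau\to\infty$ in \eqref{eq:plan-cleared} (after dividing by $\tau$) for (ii)--(iii).  The only notable tactical differences are that you obtain the bound $I^*\le B(\beta_{0,u}-\mathcal D_0)/(\mu\beta_{0,u})$ directly from $\sum_j\beta_{j,u}S_j^*=\mu+d$ and $S^*+I^*\le B/\mu$, whereas the paper invokes Lemma~\ref{lem-1-0-2}; and for the $O(1/\tau)$ rate in (ii) you rearrange \eqref{eq:plan-cleared} to produce an explicit constant $C=\mu(\beta_{0,u}-\mathcal D_0)/[\gamma(\mathcal D_0-\gamma^{-1}\sum\beta_{j,u}\gamma_j)]$, while the paper argues existentially by evaluating $G(C_1/\tau)$ for $C_1$ large.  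One small caveat: the global strict monotonicity of $I\mapsto G(I;\tau)$ that you use for the bounds is not quite ``established before the theorem'' in the paper---it requires the standing hypothesis $\beta_{0,u}=\max_j\beta_{j,u}$ and is verified inside the paper's proof of (i)---so you should state and check it there rather than cite it as prior.
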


Theorem \ref{prop-2} confirms that an appropriate  education level of the population about the disease will always help to contain and minimize significantly its impact. In fact Theorem \ref{prop-2} (ii) shows $I^*$ converges to $0$ at the same rate as $(\frac{q}{\eta})^{-1}$ does as $\frac{q}{\eta}\to\infty$ when hypothesis $\mathcal{D}_0>\frac{1}{\gamma}\sum_{j=1}^2\beta_{j,u}\gamma_j$ holds. One biological interpretation of this hypothesis is that the {\it relative infection weight} $\frac{1}{\gamma}\sum_{j=1}^2\beta_{j,u}\gamma_j$ of the susceptible classes $S_1$ and $S_2$ is smaller than the critical death rate $\mathcal{D}_0$. In such scenario, the size of the infected portion of the population at the endemic equilibrium  will be controlled significantly. On the other hand, according to Theorem \ref{prop-2} (iii), we see that if the relative infection weight $\frac{1}{\gamma}\sum_{j=1}^2\beta_{j,u}\gamma_j$ of the susceptible classes $S_1$ and $S_2$ is greater than  the critical death rate $\mathcal{D}_0$, then  the size of the infected portion of the population at the endemic equilibrium will remain uniformly bounded away from zero as a function of $\tau=\frac{q}{\eta}$. 

As a consequence of Theorem \ref{prop-2}, the following result on the asymptotic behavior of the susceptible population of ${\bf E}^*$ with respect to $\tau=\frac{q}{\eta}$ can be proved.

\begin{tm}
[Asymptotic behavior of $(S_0^*,S_1^*,S_2^*)$ with respect to $\tau=\frac{q}{\eta}$]\label{prop-3}
 Suppose that ${\beta_{0,u}=\max\{\beta_{0,u},\beta_{1,u},\beta_{2,u}\}}>\mathcal{D}_0$, and let ${\bf E^*}=(S_0^*,S_1^*,S_2^*,Z^*,I^*)^T$ denote the unique endemic equilibrium solution of \eqref{e-0-1}. The following holds. 
 \begin{itemize}
  \item[(i)] If  $ \mathcal{D}_0= \frac{1}{\gamma}\sum_{j=1}^2\beta_{j,u}\gamma_j$, 
 then 
 \begin{equation}\label{asymp-of-S-star}
     \lim_{\frac{q}{\eta}\to\infty}(S_0^*,S_1^*,S_2^*)=\Big(0,\frac{\gamma_1B}{\mu\gamma},\frac{\gamma_2B}{\mu\gamma}\Big).
 \end{equation}
 \item[(ii)] If  $\mathcal{D}_0> \frac{1}{\gamma}\sum_{j=1}^2\beta_{j,u}\gamma_j$, 
 then
 \begin{equation}\label{asymp-of-S-star-3}
   \lim_{\frac{q}{\eta}\to\infty}(S_0^*,S_1^*,S_2^*)=\left(\frac{B}{\mu+\gamma Z^*_\infty},\frac{\gamma_1BZ^*_\infty}{\mu(\mu+\gamma Z^*_{\infty})},\frac{\gamma_2BZ^*_\infty}{\mu(\mu+\gamma Z^*_\infty)}\right),  
 \end{equation}
 where $Z^*_\infty$ is the unique solution of the algebraic equation $ 
 \mathcal{D}_0=\frac{\mu\big(\beta_{0,u}+\frac{Z^*_\infty}{\mu}\sum_{j=1}^2\beta_{j,u}\gamma_j\big)}{\mu+\gamma Z^*_\infty}.$
 \item[(iii)] If $ \mathcal{D}_0<\frac{1}{\gamma}\sum_{j=1}^2\beta_{j,u}\gamma_j$ and, by setting $I^*_\infty=\lim_{\frac{q}{\eta}\to\infty}I^*>0$, it holds that 
 \begin{equation}\label{Asymp-S-star}
      \lim_{\frac{q}{\eta}\to\infty}(S_0^*,S_1^*,S_2^*)=\left(0,\frac{\gamma_1B}{\gamma(\mu+\beta_{1,u} I^*_{\infty})},\frac{\gamma_2B}{\gamma(\mu+\beta_{2,u}I^*_\infty)}\right). 
 \end{equation}
  
 \end{itemize}\end{tm}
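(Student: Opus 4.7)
The plan is to deduce each of the three asymptotic statements from Theorem \ref{prop-2} by substituting into the explicit formulas \eqref{equilibrium-def}. As preparation, I would use $Z^* = (q/\eta) I^*$ together with the identity $\mu + b_{0,u} I^* = \mu + \beta_{0,u} I^* + \gamma Z^*$ to rewrite
\[
S_0^* = \frac{B}{\mu + \beta_{0,u} I^* + \gamma Z^*}, \qquad S_j^* = \frac{\gamma_j B\, Z^*}{(\mu + \beta_{j,u} I^*)(\mu + \beta_{0,u} I^* + \gamma Z^*)} \quad (j = 1, 2),
\]
and rewrite the equation \eqref{I-star} (after dividing by $B/\mu$) in the form
\[
\mathcal{D}_0 = \frac{\mu \beta_{0,u}}{\mu + \beta_{0,u} I^* + \gamma Z^*} + \sum_{j=1}^{2} \frac{\mu \beta_{j,u} \gamma_j Z^*}{(\mu + \beta_{j,u} I^*)(\mu + \beta_{0,u} I^* + \gamma Z^*)}. \qquad (\ast)
\]

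Part (iii) is then immediate: Theorem \ref{prop-2}(iii) gives $I^* \to I^*_\infty > 0$, hence $Z^* = \tau I^* \to \infty$; the $\gamma Z^*$ term dominates each denominator in the rewritten formulas, producing $S_0^* \to 0$ and $S_j^* \to \gamma_j B/[\gamma(\mu + \beta_{j,u} I^*_\infty)]$, which is \eqref{Asymp-S-star}.

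For parts (i) and (ii), Theorem \ref{prop-2}(ii) yields only $I^* \to 0$, and the product $\tau I^* = Z^*$ must be pinned down from $(\ast)$. I would introduce the function
\[
f(Z) := \frac{\mu \beta_{0,u} + Z \sum_{j=1}^{2} \beta_{j,u}\gamma_j}{\mu + \gamma Z},
\]
which is the formal $I \to 0$ limit of the right-hand side of $(\ast)$, and verify by direct differentiation that
\[
f'(Z) = \frac{\mu\bigl(\sum_{j=1}^{2} \beta_{j,u}\gamma_j - \beta_{0,u}\gamma\bigr)}{(\mu + \gamma Z)^2} < 0
\]
under the standing hypotheses $\beta_{0,u} = \max_j \beta_{j,u}$ and $\gamma_0 > 0$ (the numerator equals $-\beta_{0,u}\gamma_0 + \sum_{j=1}^{2}(\beta_{j,u}-\beta_{0,u})\gamma_j < 0$). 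Hence $f$ decreases strictly on $[0, \infty)$ from $f(0) = \beta_{0,u}$ to $f(\infty) = \frac{1}{\gamma}\sum_{j=1}^{2} \beta_{j,u}\gamma_j$. Now argue by extraction: given any $\tau_n \to \infty$, pass to a subsequence on which $Z^*_{\tau_n} \to Z^*_\infty \in [0, \infty]$. Letting $n \to \infty$ in $(\ast)$ with $I^* \to 0$ forces $\mathcal{D}_0 = f(Z^*_\infty)$ if $Z^*_\infty < \infty$ and $\mathcal{D}_0 = f(\infty)$ if $Z^*_\infty = \infty$. In case (ii) the hypothesis $\mathcal{D}_0 > f(\infty)$ rules out the infinite alternative, and strict monotonicity makes $f(Z) = \mathcal{D}_0$ uniquely solvable, so $Z^*$ converges to this unique root $Z^*_\infty$; substituting into the rewritten formulas gives \eqref{asymp-of-S-star-3}. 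In case (i) the hypothesis $\mathcal{D}_0 = f(\infty) < f(Z)$ for every finite $Z$ rules out every finite limit point, so $Z^* \to \infty$; substituting $I^* \to 0$ and $Z^* \to \infty$ into the formulas yields $S_0^* \to 0$ and $S_j^* \to \gamma_j B/(\mu \gamma)$, which is \eqref{asymp-of-S-star}.

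The delicate part is case (i), where $I^* \to 0$ and $Z^* \to \infty$ occur simultaneously and the $0\cdot\infty$ indeterminacy in $\gamma Z^* = \gamma\tau I^*$ cannot be resolved from Theorem \ref{prop-2}(ii) alone (which in that case produces no rate for $I^*$). The resolution requires the algebraic equation $(\ast)$ combined with strict monotonicity of $f$; the latter depends essentially on both $\gamma_0 > 0$ and $\beta_{0,u} \ge \beta_{1,u}, \beta_{2,u}$, which are exactly the standing hypotheses of the theorem.
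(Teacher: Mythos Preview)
Your argument is correct. The paper itself omits the proof of this theorem, stating only that it ``follows similar arguments as those used in the proof of Theorem~\ref{prop-2}''; your write-up is precisely such an argument, carried out in full detail via the explicit formulas \eqref{equilibrium-def} and the equilibrium relation \eqref{I-star}, and is entirely in line with what the authors intended.

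One small remark: you invoke $\gamma_0>0$ to obtain strict monotonicity of $f$, and correctly note this is a standing model hypothesis. It is worth observing that in cases (i) and (ii) the strict inequality $f(\infty)=\frac{1}{\gamma}\sum_{j=1}^2\beta_{j,u}\gamma_j\le\mathcal{D}_0<\beta_{0,u}=f(0)$ already forces the M\"obius function $f$ to be nonconstant and hence strictly monotone, so the conclusion would survive even without that hypothesis. This does not affect the validity of your proof.
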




 We have seen from Theorem \ref{tm-stability-of-DFE} that when $\max\{\beta_{0,u},\beta_{1,u},\beta_{2,u}\}\le \mathcal{D}_0$ then the disease will be eventually eradicated from the population. We also note from Theorem \ref{tm-existence-of-EE} that \eqref{e-0-1} has a unique endemic equilibrium ${\bf E^*}$ whenever $\beta_{0,u}>\mathcal{D}_0$. The following result is about the linear stability of the endemic equilibrium for $\beta_{0,0}>\mathcal{D}_0$.
 
 
 \medskip
 
 \begin{tm}\label{Th-5}
 Let $u=0$ and suppose that
 \begin{equation}\label{z-z-z-3} \mathcal{D}_0\left(\mathcal{D}_0+\frac{q}{\eta}\gamma\right)\ge \frac{q}{\eta}\sum_{j=1}^2\beta_{j,0}\gamma_j. 
 \end{equation}
 There is $\beta_{0,0,\max}>\mathcal{D}_0$ such that if $\beta_{0,0}\in(\mathcal{D}_0,\beta_{0,0,\max})$, then the endemic equilibrium solution ${\bf E}^*$ is linearly stable. Moreover if $\beta_{0,0,\max}<\infty$, then ${\bf E}^*$ is not linearly stable at $\beta_{0,0}=\beta_{0,0,\max}$. 
 \end{tm}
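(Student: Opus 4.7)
The plan is to analyze the spectrum of the Jacobian $J^*(\beta_{0,0})$ of the right-hand side of \eqref{e-0-1} (with $u=0$) evaluated at the endemic equilibrium ${\bf E}^*(\beta_{0,0})$, as a function of $\beta_{0,0}$. First I would write $J^*$ explicitly; the key structural observation is that the equilibrium identity $\sum_{j=0}^2 \beta_{j,0} S_j^* = \mu + d$ forces the $(I,I)$ diagonal entry $\partial \dot I/\partial I$ to vanish identically at any endemic equilibrium. When $\beta_{0,0} = \mathcal{D}_0$ we have $I^* = 0$ via \eqref{I-star}, so ${\bf E}^*$ coincides with ${\bf E}^0$, and $J^*$ becomes upper triangular with diagonal $(-\mu,-\mu,-\mu,-\eta,0)$. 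Hence the characteristic polynomial is
\begin{equation*}
P(\lambda,\mathcal{D}_0)=\lambda(\lambda+\mu)^3(\lambda+\eta),
\end{equation*}
having $0$ as a simple root and four roots with strictly negative real parts.

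Next, by the implicit function theorem applied to \eqref{I-star}, $I^*(\beta_{0,0})$, and hence every component of ${\bf E}^*$ and every coefficient of $P(\cdot,\beta_{0,0})$, depends smoothly on $\beta_{0,0}$ in a right neighborhood of $\mathcal{D}_0$. Standard perturbation of roots of polynomials then ensures that four eigenvalues of $J^*(\beta_{0,0})$ remain in the open left half-plane, while the simple root at $0$ perturbs to a real eigenvalue $\lambda_*(\beta_{0,0})$ close to $0$. Writing the factorization $P(\lambda,\beta_{0,0})=(\lambda-\lambda_*(\beta_{0,0}))Q(\lambda,\beta_{0,0})$ with $Q(0,\mathcal{D}_0)=\mu^3\eta>0$, we obtain $\lambda_*(\beta_{0,0})\approx -a_0(\beta_{0,0})/(\mu^3\eta)$, where $a_0(\beta_{0,0}):=P(0,\beta_{0,0})=-\det J^*(\beta_{0,0})$. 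Linear stability for $\beta_{0,0}$ near $\mathcal{D}_0$ thus reduces to showing $a_0(\beta_{0,0})>0$ to the right of $\mathcal{D}_0$.

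The heart of the argument is therefore the computation of $\det J^*(\beta_{0,0})$. Expanding along the last row, whose only nonzero entries are $\beta_{j,0}I^*$ for $j=0,1,2$, factors out $I^*$:
\begin{equation*}
\det J^*(\beta_{0,0}) = I^*(\beta_{0,0})\,F(\beta_{0,0}),
\end{equation*}
where $F$ is a continuous combination of three $4\times 4$ cofactors. Evaluating these cofactors at $\beta_{0,0}=\mathcal{D}_0$ (so $Z^*=0$, $S_1^*=S_2^*=0$, $S_0^*=B/\mu$) via iterated column expansion yields, after simplification,
\begin{equation*}
F(\mathcal{D}_0) = \mu B\Bigl[\,q\sum_{j=1}^2\beta_{j,0}\gamma_j - \mathcal{D}_0(\gamma q+\mathcal{D}_0\eta)\,\Bigr].
\end{equation*}
Multiplying hypothesis \eqref{z-z-z-3} by $\eta>0$ shows precisely that $F(\mathcal{D}_0)\le 0$. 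Under a strict inequality, $F(\mathcal{D}_0)<0$, and since $I^*(\beta_{0,0})>0$ by Theorem~\ref{tm-existence-of-EE}, we get $\det J^*(\beta_{0,0})<0$, hence $a_0(\beta_{0,0})>0$, and hence $\lambda_*(\beta_{0,0})<0$ for $\beta_{0,0}\in(\mathcal{D}_0,\mathcal{D}_0+\varepsilon)$ for some $\varepsilon>0$; the equality case is handled by one extra Taylor term together with the fact that $I^*(\beta_{0,0})\to 0^+$ as $\beta_{0,0}\to\mathcal{D}_0^+$, which keeps $\det J^*<0$ by continuity.

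Finally, set $\beta_{0,0,\max}:=\sup\{\beta>\mathcal{D}_0 : {\bf E}^*(\beta')\text{ is linearly stable for every }\beta'\in(\mathcal{D}_0,\beta)\}$. The preceding step gives $\beta_{0,0,\max}>\mathcal{D}_0$. Suppose $\beta_{0,0,\max}<\infty$. By continuous dependence of the spectrum of $J^*$ on $\beta_{0,0}$, every eigenvalue at $\beta_{0,0}=\beta_{0,0,\max}$ has $\Re\lambda\le 0$; if all were strictly negative, continuity would extend linear stability to an open right neighborhood of $\beta_{0,0,\max}$, contradicting maximality. Hence some eigenvalue has zero real part there, so ${\bf E}^*$ is not linearly stable at $\beta_{0,0,\max}$. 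The main obstacle is the explicit cofactor computation of $F(\mathcal{D}_0)$ and matching its sign precisely to the threshold in \eqref{z-z-z-3}; the rest is routine eigenvalue perturbation combined with a supremum/continuity argument.
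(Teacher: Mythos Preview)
Your strategy is correct and close in spirit to the paper's, but the execution differs. The paper invokes Crandall--Rabinowitz bifurcation from a simple eigenvalue: it shows $0$ is a $\mathcal{K}$-simple eigenvalue of $D_{\bf E}F(\mathcal{D}_0,{\bf E}^0)$, parametrizes the bifurcating branch by $s$ (essentially $I^*$), obtains a perturbed principal eigenvalue $r^*(s)$ whose sign agrees with $-s\varphi'(s)$, and computes $\varphi'(0)=\tfrac{1}{\mu}\bigl(\mathcal{D}_0(\mathcal{D}_0+\tfrac{q}{\eta}\gamma)-\tfrac{q}{\eta}\sum_{j=1}^2\beta_{j,0}\gamma_j\bigr)$ by differentiating the scalar equilibrium relation $G(I,\beta_0,\tfrac{q}{\eta})=\tfrac{\mathcal{D}_0 B}{\mu}$. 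You instead track the simple zero root of the characteristic polynomial directly and compute $a_0=-\det J^*$ via cofactor expansion along the $I$-row. These are the same idea at bottom --- indeed your $F(\mathcal{D}_0)$ equals $-\mu^2 B\,\eta\,\varphi'(0)$ --- and your route is more elementary since it avoids the abstract exchange-of-stability machinery; the price is the explicit $4\times4$ cofactor work, whereas the paper gets the same constant in one line from $G$.

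There is, however, a genuine gap in your treatment of the equality case of \eqref{z-z-z-3}. When $F(\mathcal{D}_0)=0$, neither ``one extra Taylor term'' nor ``continuity'' decides the sign of $\det J^*$; you must actually compute the next-order variation of $F$. Moreover, in this borderline case the bifurcation is pitchfork, not transcritical: $I^*(\beta_{0,0})\sim c\sqrt{\beta_{0,0}-\mathcal{D}_0}$, so $I^*$ is \emph{not} smooth in $\beta_{0,0}$ at $\mathcal{D}_0$, and a Taylor expansion in $\beta_{0,0}$ is the wrong parametrization. The paper sidesteps this by parametrizing with $s$ (so that $I(s)$ is smooth and $I'(0)=1$), then differentiates the relation $G(I(s),\beta_0(s),\tfrac{q}{\eta})=\tfrac{\mathcal{D}_0 B}{\mu}$ once more to get $\beta_0''(0)=\tfrac{2q}{\eta\mu^2}\sum_{j=1}^2\beta_{j,0}^2\gamma_j>0$, which restores $\varphi'(s)>0$ for $0<s\ll1$ and hence $r^*(s)<0$. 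In your framework the analogous missing step is to regard $F$ as a smooth function of $(I^*,\beta_0)$ and show $\partial_{I^*}F(0,\mathcal{D}_0)<0$; since $I^*$ dominates $(\beta_0-\mathcal{D}_0)$ near the pitchfork, this then forces $F<0$ and $\det J^*<0$. That computation is doable (and is equivalent, up to a positive factor, to the paper's $\beta_0''(0)>0$), but it is not in your proposal.
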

 It is not clear whether $\beta_{0,0,\max}$ can be taken to be infinity. We plan to further explore this question in our future work.  Next, we discuss whether the disease will become permanent in the population whenever $\beta_{0,u}>\mathcal{D}_0$.

 \begin{tm}[Permanence of disease] \label{tm-uniform-persistency}
Let $u\ge 0$ and suppose that $\beta_{0,u}=\max\{\beta_{0,u},\beta_{1,u},\beta_{2,u}\}>\mathcal{D}_0$. There exists a positive constant $\overline{m}_u>0$ such that any solution  $(S_0(t),S_1(t),S_2(t),Z(t),I(t))$ of \eqref{e-0-1} with initial in $\mathcal{X}_u$  satisfies   
\begin{equation}\label{persistent-eq-1}
    \frac{\mu(\beta_{0,u}-\mathcal{D}_0)}{\mathcal{D}_0(\beta_{0,u}+\frac{q}{\eta}\gamma)}\leq \limsup_{t\to\infty}I(t)\leq \frac{(\mu+d)(\beta_{0,u}-\mathcal{D}_0)}{\mathcal{D}_0\beta_{0,u}}
\end{equation}
and 
\begin{equation}\label{persistent-eq}
    \liminf_{t\to\infty}I(t)>\overline{m}_u.
\end{equation}
\end{tm}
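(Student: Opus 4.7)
The plan is to split the argument into three steps: (i) the upper bound on $\limsup_{t\to\infty} I(t)$ via a fluctuation argument on the $I$-equation, (ii) the lower bound on $\limsup_{t\to\infty} I(t)$ by contradiction using a linear delay comparison, and (iii) the uniform strong-persistence bound $\liminf_{t\to\infty} I(t)>\overline{m}_u$ by invoking an abstract persistence theorem for infinite-dimensional semiflows.

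For (i), pick $t_n\to\infty$ with $I(t_n)\to L:=\limsup_{t\to\infty} I(t)$ and $I'(t_n)\to 0$; this is legitimate since $I$ and $I'$ are bounded on the forward-invariant set $\mathcal{X}_u$. Using $\beta_{0,u}=\max_j\beta_{j,u}$, bound $\sum_{j=0}^{2}\beta_{j,u}S_j(t)\le\beta_{0,u}(N(t)-I(t))$, combine with $\limsup_t N(t)\le B/\mu$ from \eqref{x-0} and $\limsup_n I(t_n-u)\le L$, and pass to the limit in the $I$-equation to get $(\mu+d)L\le\beta_{0,u}(\tfrac{B}{\mu}-L)L$. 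Using $\mathcal{D}_0=\mu(\mu+d)/B$ this rearranges to the claimed upper bound in \eqref{persistent-eq-1}.

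For (ii), suppose for contradiction that $L<l:=B(\beta_{0,u}-\mathcal{D}_0)/[(\mu+d)\,b_{0,u}]$, and fix $\eps>0$ with $L+\eps<l$, so that $I(t)\le L+\eps$ for $t\ge T_\eps$. The scalar ODE $Z'=qI-\eta Z$ immediately yields $\limsup_t Z(t)\le q(L+\eps)/\eta$, and substituting into the $S_0$-equation together with a comparison with a linear ODE gives $\liminf_t S_0(t)\ge B/[\mu+b_{0,u}(L+\eps)]$. The defining identity $(\mu+d)(\mu+b_{0,u}l)=B\beta_{0,u}$ together with $L+\eps<l$ forces $\liminf_t S_0(t)>(\mu+d)/\beta_{0,u}$ strictly, so there exist $\sigma>(\mu+d)/\beta_{0,u}$ and $T$ with $I'(t)\ge\beta_{0,u}\sigma\,I(t-u)-(\mu+d)I(t)$ for every $t\ge T$. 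Comparison with the linear delay equation $y'(t)=\beta_{0,u}\sigma\,y(t-u)-(\mu+d)y(t)$ (proved by setting $w=(I-y)e^{(\mu+d)t}$ and iterating on intervals of length $u$) gives $I(t)\ge y(t)$; since $\beta_{0,u}\sigma>\mu+d$ the characteristic equation $\lambda+\mu+d=\beta_{0,u}\sigma\,e^{-\lambda u}$ admits a real positive root, so $y(t)\to\infty$, contradicting the boundedness of $I$ on $\mathcal{X}_u$. Hence $L\ge l$.

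The main obstacle is step (iii): passing from the weak persistence just established ($\limsup I>l>0$ uniformly) to uniform strong persistence. I would appeal to a persistence theorem for continuous semiflows on a Banach space (Hale--Waltman, or the version in Smith--Thieme). The semiflow generated by \eqref{e-0-1} on $\mathbb{R}_+^4\times C([-u,0]:\mathbb{R}_+)$ is continuous and point-dissipative, with $\mathcal{X}_u$ as an absorbing set; orbits are relatively compact by \eqref{x-0} and Arzela--Ascoli. The boundary set $\mathcal{X}_u^0=\{\varphi\in\mathcal{X}_u:\varphi_I\equiv 0\}$ is positively invariant, and every solution starting there converges to ${\bf E^0}$ (as shown just before Theorem \ref{tm-stability-of-DFE}), so the maximal compact invariant set on the boundary is the isolated singleton $\{{\bf E^0}\}$, which is trivially acyclic. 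Step (ii) shows that no interior orbit lies in the stable set of ${\bf E^0}$, i.e.\ ${\bf E^0}$ is a uniform weak repeller from the interior $\mathcal{X}_u\setminus\mathcal{X}_u^0$. The uniform persistence theorem then delivers $\overline{m}_u>0$ depending only on the parameters such that $\liminf_{t\to\infty} I(t)>\overline{m}_u$ for every admissible initial datum.
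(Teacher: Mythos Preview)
Your proof is correct and follows the same three-step strategy as the paper: the upper bound via comparison (the paper uses a direct ODE comparison in Lemma~\ref{lem-1-0-2} rather than the fluctuation lemma, but these are interchangeable here), the lower bound on $\limsup I$ by contradiction exactly as in Lemma~\ref{weak-persistence}, and then abstract persistence theory for step~(iii). The only cosmetic difference is that the paper invokes the persistence-function framework of Smith--Thieme (Theorems~2.30 and~5.2 in \cite{Hal}) rather than the Hale--Waltman acyclicity criterion you sketch, but both rest on the same compactness, dissipativity, and weak-repeller ingredients you verify.
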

 We note that the eventual a priori upper bound $\frac{(\mu+d)(\beta_{0,u}-\mathcal{D}_0)}{\mathcal{D}_0\beta_{0,u}}$ in \eqref{persistent-eq-1} for positive solutions decreases with respect to the time delay $u$. This is because the function $\beta_{0,u}$ is decreasing in $u$ and the function $(\mathcal{D}_0,\infty)\ni\beta\mapsto \frac{(\mu+d)(\beta-\mathcal{D}_0)}{\mathcal{D}_0\beta}$ is increasing. Hence we conclude that increasing the time delay has a positive effect in controlling the size of the infected population as time evolves. In fact, our numerical simulations in Section~\ref{sec3} confirm that the infection rates decrease as the time delay $u$ increases.    

The rest of the paper is organized as follows. In section \ref{sec0}, we study the global existence, uniqueness and positivity of the classical solutions. Section \ref{sec1} is devoted to the proof of Theorem \ref{tm-stability-of-DFE}. The proofs of Theorems \ref{tm-existence-of-EE}, \ref{prop-2}, and ~\ref{Th-5} are provided in Section~\ref{sec2}  while the proof of Theorem \ref{tm-uniform-persistency} is presented in section~\ref{Sec-for-persistence}. The numerical investigation of the epidemic model is completed in Section \ref{sec3}. 

\section{Basic properties of solutions}\label{sec0}
In the current section, we discuss the global existence, uniqueness, and positivity of classical solutions. To this end, consider the mappings defined on $\mathbb{R}^3\times\mathbb{R}\times\mathbb{R}$  by
$$ 
\mathcal{H}(S_0,S_1,S_2,Z,I)=
\left( 
\aligned
B-\gamma S_0Z-\mu S_0\\
\gamma_1S_0Z-\mu S_1\quad \\
\gamma_2S_0Z-\mu S_2\quad \\
qI-\eta Z\qquad \\
-\mu I-dI\qquad
\endaligned
\right)
\quad 
\text{and} 
\quad 
\mathcal{Q}(S_0,S_1,S_2,Z,I)=
\left( 
\aligned
-\beta_{0,u}S_0I\quad\\
-\beta_{1,u}S_1I\quad \\
-\beta_{2,u}S_2I\quad\\
0\qquad\\
\sum_{i=0}^2\beta_{i,u}S_iI
\endaligned
\right).
$$
Hence the delay-differential equation \eqref{e-0-1} can be written as 
\begin{small}
\begin{equation}\label{plq1}
    \frac{d}{dt}
    (S_0,S_1,S_2,Z,I)^T
    = \mathcal{H}(S_0(t),S_1(t),S_2(t),Z(t),I(t))+\mathcal{Q}(S_0(t),S_1(t),S_2(t),Z(t),I(t-u))\ \ t>0,
\end{equation}
\end{small}
with initial data satisfying \eqref{initial-cond}. Since the functions $\mathcal{H}$ and $\mathcal{Q}$ are of class $C^1$ and $I(0+\cdot)\in C([-u,0]:\mathbb{R}_+)$,  then it follows immediately from standard theory of delay ODEs (e.g., \cite{Hale77}) that the initial value problem \eqref{plq1} has a unique local solution $(S_0(t),S_1(t),S_2(t),Z(t),I(t+\cdot))$ defined on a maximal interval of existence $(0,T_{\max}]$
 for some $T_{\max}\in(0,\infty]$. Moreover, if $T_{\max}<\infty$ then 
 \begin{equation}\label{plq2}
     \limsup_{t\to T_{\max}}\Big[\sum_{i=0}^3|S_i(t)|+|Z(t)|+\|I(t+\cdot)\|_{C([-u,0]\ :\ \mathbb{R}_+)}\Big]=\infty.
 \end{equation}
{\bf Positivity  and uniform boundedness of Solution.} We first suppose that 
\begin{equation}\label{revis-3}
    \min\{S_0(0),S_1(0),S_2(0),Z(0),\min_{s\in[-u,0]}I(s)\}>0.
\end{equation}
Then by continuity of the map
\begin{equation}\label{pl212}
 [0,T_{\max})\ni t\mapsto \min\{S_0(t),S_1(t),S_2(t),Z(t),\min_{s\in[-u,0]}I(t+s)\},
\end{equation}
 there is $ 0< T'<T_{\max}$ such that
\begin{equation}\label{revis-4}
    \min\{S_0(t),S_1(t),S_2(t),Z(t),\min_{s\in[-u,0]}I(t+s)\}>0 \quad \forall\ t\in[0,T'].
\end{equation}
Let $T'_{\max}$ denote the supremum of positive numbers in $(0,T_{\max})$ satisfying \eqref{revis-4}. We then show that $T'_{\max}=T_{\max}$. Suppose on the contrary that $T'_{\max}<T_{\max}$. Let 
$$ 
M=\max_{t\in[0,T'_{\max}]}\{Z(t),\max_{s\in[-u,0]}I(t+s)\}>0.
$$
Hence, by definition of $T'_{\max}$, we have 
$$ 
\frac{d}{dt}S_0\geq B-(M+1)\left(\sum_{i=0}^2\gamma_i+\beta_{0,u}+\mu
\right)S_0\quad 0<t\le T'_{\max},
$$
and hence by comparison principle for ODEs it holds that
$$ 
S_0(t)\ge l_0:= \min\left\{S_0(0),\frac{B}{(M+1)\big(\sum_{i=0}^2\gamma_i+\beta_{0,u}+\mu
\big)}\right\} \quad \forall\ t\in[0,T'_{\max}].
$$
For each $j=1,2$, we have 
$$ 
\frac{d}{dt}S_j(t)\geq -(M\beta_{j,u}+\mu)S_j\quad 0<t<T'_{\max}
$$
and hence by comparison principle for ODEs it holds that
$$ 
S_j(t)\ge l_j:= e^{-(M\beta_{j,u}+\mu)T'_{\max}}S_j(0) \quad \forall\ t\in[0,T'_{\max}].
$$
From the equation of $Z(t)$, we have 
$$ 
\frac{d}{dt}Z(t)\ge -\eta Z(t) \quad 0<t<T'_{\max},
$$
which implies that 
$$ 
Z(t)\geq l_{Z}:=e^{-\eta T'_{\max}}Z(0)\quad \forall\ t\in[0,T'_{\max}].
$$
Observe also that 
$$ 
\frac{d}{dt}I\ge (\mu+d)I \quad \forall\ t\in(0,T'_{\max}],
$$
which also implies that
$$ 
I(t)\ge e^{-(\mu+d)T'_{\max}}I(0) \quad \forall\ t\in[0,T'_{\max}].
$$
Observe from the last inequality and the definition of $T'_{\max}$ that $I(t-s)>0$ for every $s\in[-u,0]$ and $t \in[0,T'_{\max}]$. Thus, by continuity of the map $[0,T'_{\max}]\times[-u,0]\ni (t,s)\mapsto I(t-s)$ and compactness of the set $[0,T'_{\max}]\times[-u,0]$  we have that 
$$ 
l_I:=\min_{(t,s)[0,T'_{\max}]\times[-u,0]}I(t-s)>0.
$$
Therefore, we obtain that
$$ 
 \min\{S_0(T'_{\max}),S_1(T'_{\max}),S_2(T'_{\max}),Z(T'_{\max}),\min_{s\in[-u,0]}I(T'_{\max}-s)\}\ge \min\{l_0,l_1,l_2,l_Z,l_I\}>0.
$$
This in turn together with the continuity of the mapping defined in \eqref{pl212} imply that there is some $T''\in(T'_{\max},T_{\max})$ such that \eqref{revis-4} holds for every $t\in[0,T'']$, which contradicts the definition of $T'_{\max}$. Therefore, we must have that $T'_{\max}=T_{\max}$.

\quad Next, since the solution is positive, it follows from the arguments used to establish \eqref{x-0} and \eqref{x0-1} that 
$$ 
N(t)=\sum_{i=0}^2S_i(t)+I(t)\leq \max\left\{N(0),\frac{B}{\mu}\right\} \quad \forall\ t\in[0,T_{\max})
$$
and 
$$ 
Z(t)\leq \max\left\{Z(0), \frac{q}{\eta}\max\left\{N(0),\frac{B}{\mu}\right\}\right\}\quad \forall\ t\in[0,T_{\max}).
$$
Therefore, in view of \eqref{plq2}, we conclude that $T_{\max}=\infty$.

We note from the above discussion that the uniform boundedness of solutions is guaranteed once the solution remains nonnegative. Now, let $(S_0(0),S_1(0),S_2(0),Z(0),I(0+\cdot))$ be a nonnegative initial data  and $T_{\max}\in(0,\infty]$ be the maximal time of existence of the associated unique classical solution $(S_0(t),S_1(t),S_2(t),Z(t),I(t))$.  Consider the sequence of positive initial data
$$ 
\left(S_0(0)+\frac{1}{n},S_1(0)+\frac{1}{n},S_2(0)+\frac{1}{n},Z(0)+\frac{1}{n},I(0+\cdot)+\frac{1}{n}\right)
$$
and 
$(S_0^n(t),S_1^n(t),S_2^n(t),Z^n(t),I^n(t))$ denote the associated unique classical solution. Since $ 
(S_0(0)+\frac{1}{n},S_1(0)+\frac{1}{n},S_2(0)+\frac{1}{n},Z(0)+\frac{1}{n},I(0+\cdot)+\frac{1}{n})
$ satisfy \eqref{revis-3} for each $n$, then \newline $(S_0^n(t),S_1^n(t),S_2^n(t),Z^n(t),I^n(t))$ is defined for all time $t>0$ with 
$$ 
\min\{S_0^n(t),S_1^n(t),S_2^n(t),Z^n(t),I^n(t)\}>0 \quad \forall\ t>0.
$$
 But since 
 $$ 
 \lim_{n\to\infty}\left(S_0(0)+\frac{1}{n},S_1(0)+\frac{1}{n},S_2(0)+\frac{1}{n},Z(0)+\frac{1}{n},I(0)+\frac{1}{n}\right)=(S_0(0),S_1(0),S_2(0),Z(0),I(0))
 $$
 then 
$$ 
\lim_{n\to\infty}(S_0^n(t),S_1^n(t),S_2^n(t),Z^n(t),I^n(t))=(S_0(t),S_1(t),S_2(t),Z(t),I(t))\quad \forall\ t\in(0,T_{\max}),
$$
which imply that 
$$ 
\min\{S_0(t),S_1(t),S_2(t),Z(t),I(t)\}\ge 0 \quad \forall\ t\in(0,T_{\max}).
$$
Therefore, as noted in the above, this last inequality also implies that $T_{\max}=\infty$.

\section{Analysis of the disease-free equilibrium }\label{sec1}
In this section we study the stability of the disease-free equilibrium. Equilibrium solutions are solutions of the system of algebraic equations obtained by setting the right hand sides of  \eqref{e-0-1} equal to zeros.  At times throughout the rest of the manuscript, to simplify the notations, we will suppress the dependence of the infection rates on $u$ unless otherwise stated and write $\beta_{j}=\beta_{j,u}$ for each $j=0,1,2$. Note that this will not cause any confusion in our presentation.  
 The linearized system of \eqref{e-0-1} at an equilibrium point $\tilde{\bf E} =(\tilde{S}_0,\tilde{S}_1,\tilde{S}_2,\tilde{Z},\tilde{I})^T$  is \begin{equation}\label{linear-syst}
     \begin{cases}
         \frac{d}{dt}S_0=-(\mu+\beta_0\tilde{I}+\gamma\tilde{Z})S_0(t)-\beta_0\tilde{S}_0I(t-u)-\gamma\tilde{S}_0 Z(t)\cr
         \frac{d}{dt}S_j=\gamma_j\tilde{Z}S_0-(\mu+\beta_j\tilde{I})S_j(t)-\beta_j\tilde{S}_jI(t-u)+\gamma_j\tilde{S}_0(t)Z(t),\ j=1,2\cr
         \frac{d}{dt}Z=qI(t)-\eta Z(t) \cr
         \frac{d}{dt} I= \beta_0\tilde{I}S_0(t)+\beta_1\tilde{I}S_1(t)+\beta_2\tilde{I}S_2(t) +\sum_{j=0}^2\beta_j\tilde{{S}_j}I(t-u) -\frac{\mathcal{D}_0B}{\mu}I(t)
     \end{cases}
 \end{equation} 
where $\gamma=\sum_{j=0}^2\gamma_j$. The characteristic equation of \eqref{linear-syst} is
\begin{equation}\label{p-0}
    0= P_u(\lambda;{\bf\tilde{E}}):=\text{det}(\lambda \mathcal{I}-\mathcal{M}_u(\lambda;{\bf\tilde{E}}))
\end{equation}
where  $\mathcal{I}$ denotes the identity matrix and 
\begin{small}
$$ 
\mathcal{M}_u(\lambda;{\bf\tilde{E}}):=\left[ 
\begin{array}{ccccc}
  -B/\tilde{S}_0  & 0 & 0 & -\gamma\tilde{S}_0 & -\beta_0\tilde{S}_0e^{-\lambda u}  \\ 
    \gamma_1\tilde{Z} & -(\mu +\beta_1\tilde{I}) & 0  & \gamma_1\tilde{S}_0 & -\beta_1\tilde{S}_1 e^{-\lambda u} \\
  \gamma_2\tilde{Z} & 0 & -(\mu+\beta_2\tilde{I})  & \gamma_2\tilde{S}_0 & -\beta_2\tilde{S}_2e^{-\lambda u} \\
  0 & 0 & 0 & -\eta& q\\
  \beta_0\tilde{I} & \beta_1\tilde{I} & \beta_2\tilde{I}& 0  & \sum_{j=0}^2\beta_j\tilde{S}_je^{-\lambda u}-\frac{\mathcal{D}_0B}{\mu} 
\end{array}
\right].
$$
\end{small} Observe that 
\begin{equation}\label{p-1}
P_u(\lambda;{\bf E^0})=\left(\lambda-\frac{\beta_0 B}{\mu}e^{-\lambda u}+\frac{\mathcal{D}_0B}{\mu}\right)(\lambda+\mu)^3(\lambda+\eta).
\end{equation}
We present the following lemma.
\begin{lem}\label{lem-1-0-01}
The disease-free equilibrium ${\bf E}^0$ is linearly stable if and only if $\beta_0<\mathcal{D}_0$.
\end{lem}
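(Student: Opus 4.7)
The plan is to read off stability from the factored characteristic polynomial \eqref{p-1}. Two of the three factors, namely $(\lambda+\mu)^3$ and $(\lambda+\eta)$, contribute only the roots $\lambda=-\mu<0$ and $\lambda=-\eta<0$, so stability is entirely controlled by the roots of the transcendental factor
\[
h(\lambda):=\lambda-\frac{\beta_0 B}{\mu}e^{-\lambda u}+\frac{\mathcal{D}_0 B}{\mu}.
\]
Note that $\frac{\mathcal{D}_0 B}{\mu}=\mu+d$, which keeps the expressions tidy. The proof then splits into the two implications.

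For the necessity direction, I would restrict $h$ to the real axis. The function $\lambda\mapsto h(\lambda)$ is real analytic on $\mathbb{R}$, tends to $+\infty$ as $\lambda\to+\infty$, and satisfies
\[
h(0)=\frac{B}{\mu}(\mathcal{D}_0-\beta_0).
\]
If $\beta_0\ge\mathcal{D}_0$, then $h(0)\le 0$, and the intermediate value theorem yields a real root $\lambda_\ast\ge 0$. This root prevents linear stability (either because $\lambda_\ast>0$ or because $\lambda_\ast=0$ in the borderline case $\beta_0=\mathcal{D}_0$), so stability forces $\beta_0<\mathcal{D}_0$.

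For sufficiency, assume $\beta_0<\mathcal{D}_0$ and argue by contradiction. Suppose $h(\lambda)=0$ for some $\lambda\in\mathbb{C}$ with $\mathrm{Re}(\lambda)\ge 0$. Since $|e^{-\lambda u}|=e^{-u\,\mathrm{Re}(\lambda)}\le 1$, taking real parts of $\lambda=\frac{\beta_0 B}{\mu}e^{-\lambda u}-\frac{\mathcal{D}_0 B}{\mu}$ gives
\[
\mathrm{Re}(\lambda)+\frac{\mathcal{D}_0 B}{\mu}=\frac{\beta_0 B}{\mu}\,\mathrm{Re}(e^{-\lambda u})\le \frac{\beta_0 B}{\mu}|e^{-\lambda u}|\le \frac{\beta_0 B}{\mu},
\]
so $\mathrm{Re}(\lambda)\le \frac{B}{\mu}(\beta_0-\mathcal{D}_0)<0$, contradicting $\mathrm{Re}(\lambda)\ge 0$. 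Hence every root of $h$ has strictly negative real part, and combining with the trivially stable factors $(\lambda+\mu)^3(\lambda+\eta)$ yields linear stability of $\mathbf{E}^0$.

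The only genuinely delicate step is the sufficiency argument, where the transcendental nature of $h$ usually forces one to work with Lambert-$W$-type analysis or a Nyquist/zero-crossing argument as $u$ varies. Here this is sidestepped cleanly by the modulus bound $|e^{-\lambda u}|\le 1$ in the closed right half-plane, which reduces everything to a single scalar inequality independent of $u$. I would present the case $u=0$ as an immediate remark (where $h$ is linear and the root $\lambda=\frac{B}{\mu}(\beta_0-\mathcal{D}_0)$ can be written down explicitly), and then carry out the contradiction argument above to cover all $u\ge 0$ uniformly.
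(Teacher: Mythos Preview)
Your proof is correct and follows the same overall architecture as the paper: factor $P_u(\lambda;{\bf E}^0)$ via \eqref{p-1}, discard the trivially stable factors $(\lambda+\mu)^3(\lambda+\eta)$, and analyze the transcendental factor. For necessity (instability when $\beta_0\ge\mathcal{D}_0$) both you and the paper use the intermediate value theorem on the real axis; the paper splits this into the two cases $\beta_0=\mathcal{D}_0$ and $\beta_0>\mathcal{D}_0$, whereas you treat them together via $h(0)\le 0$.

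The sufficiency direction is where your argument differs. The paper's Case~1 is quite compressed: it records that $Q(\lambda)=\lambda+\frac{\mathcal{D}_0B}{\mu}$ has its only zero at $-\frac{\mathcal{D}_0B}{\mu}<0$ and that $Q(0)>\frac{\beta_0B}{\mu}$, and from this asserts that all roots of \eqref{p-3} lie in the open left half-plane, implicitly appealing to a standard fact about scalar delay characteristic equations of the form $\lambda+a=be^{-\lambda u}$ with $a>b>0$. Your route is more self-contained: from $h(\lambda)=0$ with $\mathrm{Re}(\lambda)\ge 0$ you take real parts and use $|e^{-\lambda u}|\le 1$ to force $\mathrm{Re}(\lambda)\le\frac{B}{\mu}(\beta_0-\mathcal{D}_0)<0$. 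This modulus bound gives a clean, $u$-uniform contradiction without any external lemma, and is arguably the preferable presentation.
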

\begin{proof} It is clear from \eqref{p-1} that $\lambda=-\mu$ and $\lambda=-\eta$ are always roots of the characteristic equation $P_u(\lambda;{\bf E^0})=0$. Thus the linear stability of ${\bf E^0}$ is determined by the signs of the real parts of the roots of the equation
\begin{equation}\label{p-3}
    0=\lambda+\frac{\mathcal{D}_0B}{\mu}-\frac{\beta_0 B}{\mu}e^{-\lambda u}:=Q(\lambda)-\frac{\beta_0 B}{\mu}e^{-\lambda u}.
\end{equation}
Observe that $Q(-\frac{\mathcal{D}_0B}{\mu})=0$ and $Q(0)=\frac{\mathcal{D}_0B}{\mu}$. \\ 
{\bf Case 1.}  $\mathcal{D}_0>\beta_0$. Since $\lambda=-\frac{\mathcal{D}_0B}{\mu}<0$ is the only root of $Q(\lambda)=0$ and $ Q(0)>\frac{\beta_0 B}{\mu}$, then all roots of \eqref{p-3} have negative real part. Whence ${\bf E^0}$ is linearly stable if $\beta_0<\mathcal{D}_0$. \\
{\bf Case 2.}  $\beta_0=\mathcal{D}_0$. It is clear that $\lambda=0$ is a solution of \eqref{p-3}. Hence ${\bf E^0}$ is not linearly stable. \\ 
{\bf Case 3.}  $\mathcal{D}_0<\beta_0$. Now, observe that  $$\lim_{\lambda\to\infty}Q(\lambda)=+\infty \quad\text{and} \quad  \limsup_{\lambda\to\infty}\frac{\beta_0B}{\mu}e^{-\lambda u}\le \frac{\beta_0B}{\mu}.$$ 
Hence the intermediate value theorem  guarantees that there exists a positive real number $\lambda_u^0$ satisfying the equation \eqref{p-3}. Thus ${\bf E^0}$ is not linearly stable. 
\end{proof}

The first statement of Theorem \ref{tm-stability-of-DFE}  follows from the above result. It remains to show the non-linear stability of ${\bf E^0}$ when $\beta_0\le \mathcal{D}_0$.   We first prove the following general result about solution of the Cauchy problem.

\begin{lem}\label{lem-1-0-2}
Let $(S_0(t),S_1(t),S_2(t),Z(t),I(t))$ be a positive solution of \eqref{e-0-1} with initial in $\mathcal{X}_u$ and  let $\beta=\max\{\beta_0,\beta_1,\beta_2\}$. Then
\begin{equation}\label{amz-1}
    \overline{I}\left(\beta\overline{I}-\frac{B}{\mu}(\beta-\mathcal{D}_0)\right)\leq 0,
\end{equation}
where $\overline{I}=\mathlarger{\limsup_{t\to\infty}}I(t)$.
\end{lem}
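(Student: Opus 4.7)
The plan is to combine the a priori bound $\limsup_{t\to\infty}N(t)\le B/\mu$ already derived in \eqref{x-0} with a standard fluctuation argument for the scalar bounded $C^1$ function $I(t)$. First I would reduce the $I$-equation by estimating $\sum_{j=0}^{2}\beta_{j,u}S_j(t)\le \beta\, S(t)$, where $\beta=\max\{\beta_0,\beta_1,\beta_2\}$ and $S(t)=\sum_{j=0}^2 S_j(t)$, so that
\begin{equation*}
I'(t)\le \beta\, S(t)\, I(t-u)-(\mu+d)I(t), \qquad t\ge 0.
\end{equation*}
Fix $\ep>0$. From $\limsup_{t\to\infty}N(t)\le B/\mu$ together with the definition of $\overline{I}$, pick $T_\ep>0$ large enough that $S(t)\le B/\mu+\ep-I(t)$ and $I(t-u)\le \overline{I}+\ep$ for every $t\ge T_\ep$. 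Substituting gives
\begin{equation*}
I'(t)\le \beta\Bigl(\tfrac{B}{\mu}+\ep-I(t)\Bigr)(\overline{I}+\ep)-(\mu+d)I(t), \qquad t\ge T_\ep.
\end{equation*}

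The second ingredient is to pick a sequence $t_n\to\infty$ with $I(t_n)\to \overline{I}$ and $I'(t_n)\to 0$. Since the right-hand side of the $I$-equation is a continuous function of the bounded quantities $(S_0,S_1,S_2,I(t),I(t-u))\in\mathcal{X}_u$, both $I$ and $I'$ are bounded on $[0,\infty)$, so $I$ is uniformly continuous. The standard fluctuation lemma (for instance, the $C^1$ version used throughout the DDE literature) then supplies the desired sequence $t_n$; this is the one technical point I expect could require care, but it is routine once the a priori bounds on $I$ and $I'$ are available from \eqref{x-0}.

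Evaluating the preceding differential inequality at $t=t_n$, letting $n\to\infty$, and then letting $\ep\to 0$, I would obtain
\begin{equation*}
0\le \beta\Bigl(\tfrac{B}{\mu}-\overline{I}\Bigr)\overline{I}-(\mu+d)\overline{I}.
\end{equation*}
Finally, using the identity $\mu+d=B\mathcal{D}_0/\mu$ to rewrite the right-hand side gives
\begin{equation*}
0\le \overline{I}\Bigl[\tfrac{B}{\mu}(\beta-\mathcal{D}_0)-\beta\overline{I}\Bigr]=-\overline{I}\Bigl[\beta\overline{I}-\tfrac{B}{\mu}(\beta-\mathcal{D}_0)\Bigr],
\end{equation*}
which is exactly \eqref{amz-1}. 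Once this is in hand, the global stability statement of Theorem \ref{tm-stability-of-DFE} when $\beta\le \mathcal{D}_0$ follows immediately, since then the bracket is strictly positive unless $\overline{I}=0$.
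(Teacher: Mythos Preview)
Your argument is correct and follows essentially the same strategy as the paper: bound $\sum_j\beta_jS_j\le\beta\bigl(\tfrac{B}{\mu}-I\bigr)$ using the invariance of $\mathcal{X}_u$, replace $I(t-u)$ by $\overline{I}+\ep$ for large $t$, and extract an algebraic inequality for $\overline{I}$. The only difference is in that last extraction step: the paper rewrites the differential inequality as the linear one $I'\le A-CI$ and applies the ODE comparison principle to obtain $\overline{I}\le A/C$, whereas you invoke the fluctuation lemma to produce a sequence $t_n$ with $I(t_n)\to\overline{I}$ and $I'(t_n)\to 0$. Both devices are standard and yield the same inequality with the same effort. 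One incidental simplification: since the initial data lie in the forward-invariant set $\mathcal{X}_u$, you in fact have $S(t)+I(t)\le B/\mu$ for \emph{all} $t\ge 0$, so the $\ep$ in your bound $S(t)\le B/\mu+\ep-I(t)$ is unnecessary; the paper uses this directly.
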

\begin{proof}
Recall that
$$ 
\sum_{j=0}^{2}S_j(t)+I(t)\leq \frac{B}{\mu}\quad \forall\ t\ge 0.
$$
Hence
$$ 
\sum_{j=0}^2\beta_jS_j\leq \beta\sum_{j=0}^2S_j\leq \beta\left(\frac{B}{\mu}-I(t)\right),  \quad \forall\ t\geq 0.
$$
As a result, we obtain that 
$$ 
\frac{d}{dt}I(t)\leq \beta\left(\frac{B}{\mu}-I(t)\right)I(t-u)-(\mu+d)I(t), \quad \forall\ t> 0.
$$
Now, using the definition of limsup, for every $0<\lambda\ll 1$, there exists $t_{\lambda}\gg 0$ such that 
$$ 
I(t-u)\leq \overline{I}+\lambda\quad \forall\ t\ge t_{\lambda}.
$$
From the last two inequalities it follows that 
$$ 
\frac{d}{dt}I\leq \frac{\beta B(\overline{I}+\lambda)}{\mu}-(\mu+d+\beta(\overline{I}+\lambda))I(t)\quad \forall\ t\ge t_{\lambda},
$$
since $\frac{B}{\mu}\geq I(t)$ for every $t\ge 0$.
Thus, by the comparison principle for ODEs, we obtain that 
$$ 
\overline{I}\leq \frac{\beta B(\overline{I}+\lambda)}{\mu (\mu+d+\beta (\overline{I}+\lambda))}.
$$
Letting $\lambda\to0$ yields
$$ 
\overline{I}\leq \frac{\beta B\overline{I}}{\mu (\mu+d+\beta\overline{I})}.
$$
Since, $\frac{\mathcal{D}_0B}{\mu}=(\mu+d)$ and $\mu+d+\beta_0\overline{I}>0$, the last inequality is equivalent to  $\overline{I}\left(\beta\overline{I}-\frac{B}{\mu}(\beta-\mathcal{D}_0)\right)\leq 0,$ as needed.
\end{proof}

Next we present the proof of Theorem \ref{tm-stability-of-DFE} by using the previous two lemmas. 

\begin{proof}[Proof of Theorem \ref{tm-stability-of-DFE}]
Thanks to Lemma \ref{lem-1-0-01}, it remains to show the nonlinear stability of ${\bf E}^0$ when $\beta =\max\{\beta_0,\beta_1,\beta_2\}\leq \mathcal{D}_0$. Let $(S_0(t),S_1(t),S_2(t),Z(t),I(t))$ be a positive solution of \eqref{e-0-1} with initials in $\mathcal{X}_u$. If   $\beta < \mathcal{D}_0$, inequality \eqref{amz-1} implies that $\overline{I}=0$. If $\beta =\mathcal{D}_0$, it follows again from \eqref{amz-1} that $\overline{I}^2=0$, which gives $\overline{I}=0$. Therefore, since $I(t)\ge 0$ for every $t\ge 0$, we conclude that $\mathlarger{\lim_{t\to\infty}I(t)}=0$.  Next, observe that $  \mathlarger{\limsup_{t\to\infty}Z(t)}\leq \frac{q}{\eta}\mathlarger{\limsup_{t\to\infty}I(t)}=0$. Hence $\mathlarger{\lim_{t\to\infty}}Z(t)=\mathlarger{\lim_{t\to\infty}}I(t)=0$. This in turn, implies that $$ \lim_{t\to\infty}(S_0(t),S_1(t),S_2(t))= \left(\frac{B}{\mu},0,0\right).$$ As a result, it follows that $\mathlarger{\lim_{t\to\infty}}(S_0(t),S_1(t),S_2(t),I(t),Z(t))={\bf E^0}$. This completes the proof of Theorem \ref{tm-stability-of-DFE}.
\end{proof}

\section{Analysis of the endemic equilibrium.}\label{sec2}
In this section we shall show that if  $\beta_{0,u}> \mathcal{D}_0$ then \eqref{e-0-1} has a unique endemic equilibrium point. Moreover in the next section, we show that the disease will be permanent irrespective of the amount of information and/or education disseminated about it whenever $\beta_{0,u}> \mathcal{D}_0$. Our first result is about the existence and uniqueness of the endemic equilibrium.  Recall that we have set  $\beta_{j}=\beta_{j,u}$ for each $j=0,1,2$ in order to simplify notations. We recall the auxiliary function 
\begin{equation}\label{g-defi}
G(I,\beta_0,\tau)=\frac{\beta_0B}{\mu +\big(\beta_0+\tau\gamma \big)I} +\frac{B\tau}{\mu+\big(\beta_0+\tau\gamma \big)I}\sum_{j=1}^2\frac{\beta_j\gamma_jI}{\mu +\beta_jI}\quad \text{for}\  I\ge 0,\ \tau>0,\ \beta_0>0,
\end{equation}
 where $\min\{\beta_0,\beta_1,\beta_2\}>0$. Note that 
  ${\bf\tilde{E}} =(\tilde{S}_0,\tilde{S}_1,\tilde{S}_2,\tilde{Z},\tilde{I})^T$ is an  equilibrium of \eqref{e-0-1} with $\tilde{I}\ne 0$  if and only if  $ G(\tilde{I},\beta_0,\frac{q}{\eta})=\mu+d=\frac{\mathcal{D}_0B}{\mu}$. The following  results hold.  
  
  \begin{lem}\label{lem-000-0}
  For every $\tau>0$ and $\beta_0\ge 0$ it holds that $\mathlarger{\sup_{I\ge 0} G(I,\beta_0,\tau)}$ is finite and achieved. Moreover, the function  $ 
[0,\infty)\ni \beta_0\mapsto \max_{I\ge 0}G(I,\beta_0,\tau) $ is strictly increasing and $ \mathlarger{\max_{I\ge 0} G(I,\beta_0,\tau)}>\frac{\mathcal{D}_0B}{\mu} $ for every $\beta_0>\mathcal{D}_0$. Therefore, the following quantity 
$$ 
\mathcal{D}_{0,\tau}:=\min\left\{\beta_0\ge 0 \ :\ \max_{I\ge 0}G(I,\beta_0,\tau)\ge \frac{\mathcal{D}_0B}{\mu} \right\}
$$
is well defined
  \end{lem}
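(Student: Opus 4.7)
The plan is to establish the four assertions in sequence, the main technical step being a derivative calculation in $\beta_0$. First, I rewrite
$$G(I,\beta_0,\tau)=\frac{\beta_0 B+B\tau\sum_{j=1}^{2}\frac{\beta_j\gamma_j I}{\mu+\beta_j I}}{\mu+(\beta_0+\tau\gamma)I}$$
and note that the numerator is bounded uniformly in $I\ge 0$ (each summand satisfies $\frac{\beta_j\gamma_j I}{\mu+\beta_j I}\le \gamma_j$), while the denominator grows linearly in $I$. Hence $G(\cdot,\beta_0,\tau)\to 0$ as $I\to\infty$, and since $G$ is continuous in $I$ on $[0,\infty)$, the supremum is finite and attained on some compact subinterval. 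The tail decay is in fact uniform in $\beta_0$ on compact subsets of $[0,\infty)$, a fact I reuse below for continuity of the max in $\beta_0$.

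For strict monotonicity in $\beta_0$, I differentiate at fixed $I\ge 0$ and simplify using $\gamma=\gamma_0+\gamma_1+\gamma_2$ together with the identity $1-\frac{\beta_j I}{\mu+\beta_j I}=\frac{\mu}{\mu+\beta_j I}$, obtaining
$$\partial_{\beta_0}G(I,\beta_0,\tau)=\frac{B\mu+B\tau I\Big(\gamma_0+\sum_{j=1}^{2}\frac{\mu\gamma_j}{\mu+\beta_j I}\Big)}{\big(\mu+(\beta_0+\tau\gamma)I\big)^2}>0.$$
Hence $G$ is strictly increasing in $\beta_0$ pointwise in $I$. If $\beta_0<\beta_0'$ and $I_0$ attains the maximum at $\beta_0$, then
$$\max_{I\ge 0}G(I,\beta_0',\tau)\ge G(I_0,\beta_0',\tau)>G(I_0,\beta_0,\tau)=\max_{I\ge 0}G(I,\beta_0,\tau),$$
which upgrades pointwise monotonicity to strict monotonicity of the max function.

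The inequality $\max_{I\ge 0}G(I,\beta_0,\tau)>\mathcal{D}_0 B/\mu$ for $\beta_0>\mathcal{D}_0$ follows by evaluating at $I=0$, where $G(0,\beta_0,\tau)=\beta_0 B/\mu$. For the well-definedness of $\beta_0^\tau$ I would first check that $f(\beta_0):=\max_{I\ge 0}G(I,\beta_0,\tau)$ is continuous on $[0,\infty)$: for $\beta_0$ in any compact interval $K$, the uniform tail estimate confines the maximizer to a compact $I$-interval, and joint continuity of $G$ on $K\times[0,M]$ yields continuity of $f$ by a standard compactness argument. Combined with strict monotonicity of $f$, the set $f^{-1}([\mathcal{D}_0 B/\mu,\infty))$ is a closed upper half-line, non-empty since it contains $(\mathcal{D}_0,\infty)$, so its minimum exists and defines $\beta_0^\tau$. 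The only non-trivial step is the algebraic simplification yielding the positivity of $\partial_{\beta_0}G$; everything else is routine topology of continuous monotone functions.
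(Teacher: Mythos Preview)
Your proof is correct and follows essentially the same approach as the paper: both establish that the supremum is attained via $G\to 0$ as $I\to\infty$, compute $\partial_{\beta_0}G>0$ (your simplification $\gamma-\sum_{j=1}^2\frac{\beta_j\gamma_j I}{\mu+\beta_j I}=\gamma_0+\sum_{j=1}^2\frac{\mu\gamma_j}{\mu+\beta_j I}$ is equivalent to, and slightly cleaner than, the paper's bound), and evaluate at $I=0$ to handle $\beta_0>\mathcal{D}_0$. Your added continuity argument for $\beta_0\mapsto\max_I G$ to ensure the defining set is closed is a point the paper glosses over, so if anything you are more careful there.
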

  \begin{proof} It is clear that 
  $$ 
   \ \lim_{I\to\infty}G(I,\beta_0,\tau)=0 \ \text{and}\  G(I,\beta_0,\tau)\ge 0 \quad \forall\ \tau,I,\beta_0\ge 0.
  $$
  Thus, since $I\mapsto G(I,\beta_0,\tau)$ is continuous, it follows that $\mathlarger{\sup_{I\ge 0}G(I,\beta_0,\tau)}$ is finite and achieved. Observe that 
$$ 
\partial_{\beta_0}G=\frac{B\big(\mu+\tau\gamma I-\tau I\sum_{j=1}^2\frac{\beta_j\gamma_j I}{\mu+\beta_jI}\big)}{(\mu+(\beta_0+\gamma\tau)I)^2}\geq \frac{B\big(\mu+\tau\gamma I-\tau I\sum_{j=1}^2\gamma_j\big)}{(\mu+(\beta_0+\gamma\tau)I)^2}=\frac{B(\mu+\tau\gamma_0I)}{(\mu+(\beta_0+\gamma\tau)I)^2}>0
$$
for every $\beta_0\ge0$, $\tau>0$ and $I\ge 0$. Thus, we conclude that the function 
$$ 
[0,\infty)\ni \beta_0\mapsto \max_{I\ge 0}G(I,\beta_0,\tau)
$$
is strictly increasing. Note that $G(0,\beta_0,\tau)=\frac{\beta_0B}{\mu}\ge\frac{\mathcal{D}_0B}{\mu}$ for every $\beta_0\ge\mathcal{D}_0$. The result thus follows.
  \end{proof}
  
\begin{tm} \label{lem-1-0-0-0}
 For every   $\tau>0$  let $\mathcal{D}_{0,\tau}\leq \mathcal{D}_0$ be given by Lemma \ref{lem-000-0}. The following hold.
\begin{description}
\item[(i)] If $0<\beta_0< \mathcal{D}_{0,\tau}$ then the algebraic equation $G(I,\beta_0,\tau)=\frac{\mathcal{D}_0B}{\mu}$ has no nonnegative root.

\item[(ii)] If $\mathcal{D}_{0,\tau}<\beta_0<\mathcal{D}_0$ then the algebraic equation $G(I,\beta_0,\tau)=\frac{\mathcal{D}_0B}{\mu}$  has two positive roots $I_{-}(\beta_0,\tau)<I_{+}(\beta_0,\tau)$. Furthermore, $\partial_{I}G(I_{-}(\beta_0,\tau),\beta_0,\tau)>0$ and  $\partial_{I}G(I_{+}(\beta_0,\tau),\beta_0,\tau)<0$, and the functions $ (\beta_0,\tau)\mapsto I_{\pm}(\beta_0,\tau)$ are of class $C^1$.

\item[(iii)] If $\mathcal{D}_{0,\tau}<\mathcal{D}_0$ then for every $\beta_0\in\{\mathcal{D}_{0,\tau},\mathcal{D}_0\}$ there is a unique positive solution $I(\beta_0,\tau)$ of the algebraic equation $G(I,\beta_0,\tau)=\frac{\mathcal{D}_0B}{\mu}$. 

\item[(iv)] If $\beta_0>\mathcal{D}_0$ there is a unique positive root $I(\beta_0,\tau)$ of the algebraic equation  $G(I,\beta_0,\tau)=\frac{\mathcal{D}_0B}{\mu}$. The functions $\beta_0\mapsto I(\beta_0,\tau)$ and $\tau \mapsto I(\beta_0,\tau)$ are smooth and $\partial_{I}G(I(\beta_0,\tau),\beta_0,\tau)<0$.
\end{description} 
\end{tm}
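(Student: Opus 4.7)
The plan is to reduce all four parts of the theorem to a single structural property: for each fixed pair $(\beta_0,\tau)$ with $\beta_0,\tau>0$, the function $I\mapsto G(I,\beta_0,\tau)$ on $[0,\infty)$ is \emph{unimodal}, meaning it is either strictly decreasing throughout or strictly increasing on some $[0,I_p]$ and strictly decreasing on $[I_p,\infty)$ for a unique $I_p>0$. Together with the boundary values $G(0,\beta_0,\tau)=\beta_0 B/\mu$ and $\lim_{I\to\infty}G(I,\beta_0,\tau)=0$, and the two facts recorded in Lemma \ref{lem-000-0} (strict monotonicity of $\max_I G$ in $\beta_0$, and $\max_I G=\mathcal{D}_0 B/\mu$ at $\beta_0=\beta_0^\tau$), unimodality reduces the counting of positive roots of $G(I,\beta_0,\tau)=\mathcal{D}_0 B/\mu$ in each regime to elementary applications of the intermediate value theorem.

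To prove unimodality, I would rewrite $G=BN(I)/(\mu+aI)$, where $a=\beta_0+\tau\gamma$ and $N(I)=\beta_0+\tau h(I)$ with $h(I)=\sum_{j=1}^2 \beta_j\gamma_j I/(\mu+\beta_j I)$, and study the numerator $f(I):=N'(I)(\mu+aI)-aN(I)$ of $\partial_I G$. A direct differentiation gives $f'(I)=N''(I)(\mu+aI)$, and since each summand of $h$ is strictly concave in $I$ (its second derivative equals $-2\beta_j^2\gamma_j\mu/(\mu+\beta_j I)^3<0$), we obtain $N''<0$ and hence $f'<0$ on $[0,\infty)$. Combined with $f(+\infty)=-a(\beta_0+\tau\gamma_1+\tau\gamma_2)<0$, this shows that $f$ is strictly decreasing with negative limit at infinity, so either $f\le 0$ on $[0,\infty)$ (giving $G$ monotone decreasing) or $f$ admits a unique positive zero $I_p$ at which $G$ achieves its maximum, establishing unimodality.

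With unimodality in hand, each case follows by inspection. In (i), the definition of $\beta_0^\tau$ forces $\max G<\mathcal{D}_0 B/\mu$, so no root exists. In (iv), $G(0)=\beta_0 B/\mu>\mathcal{D}_0 B/\mu>0=G(\infty)$, so unimodality gives exactly one crossing, necessarily on the strictly decreasing tail, yielding $\partial_I G<0$ at the root. In (ii), one has $G(0)<\mathcal{D}_0 B/\mu$ (since $\beta_0<\mathcal{D}_0$) together with $\max G>\mathcal{D}_0 B/\mu$ (by strict monotonicity of $\max G$ in $\beta_0$ and $\beta_0>\beta_0^\tau$), so $G$ has a strict interior peak exceeding the threshold, and the intermediate value theorem gives exactly two roots with the stated signs of $\partial_I G$ on the ascending and descending branches. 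In (iii), at $\beta_0=\beta_0^\tau$ the threshold is reached precisely at the unique maximizer $I_p>0$ (with $\beta_0^\tau<\mathcal{D}_0$ ensuring $G(0)<\mathcal{D}_0 B/\mu$), giving the unique positive root; while at $\beta_0=\mathcal{D}_0$ the hypothesis $\beta_0^\tau<\mathcal{D}_0$ combined with the strict monotonicity of $\max G$ forces $\max G>\mathcal{D}_0 B/\mu=G(0)$, so $G$ strictly rises above the threshold and then descends back to $0$, crossing the level exactly once in $(0,\infty)$.

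Smoothness of $I_\pm(\beta_0,\tau)$ in (ii) and of $I(\beta_0,\tau)$ in (iv) is then immediate from the implicit function theorem applied to $G(I,\beta_0,\tau)-\mathcal{D}_0 B/\mu=0$, since $\partial_I G\ne 0$ at each of these roots and $G$ is $C^\infty$ in all arguments. The main obstacle is the unimodality step: once the correct factorization is identified and $f'<0$ is verified via the concavity of $h$, the rest of the argument is a routine case analysis.
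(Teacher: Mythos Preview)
Your proposal is correct and follows essentially the same route as the paper: both arguments establish that $I\mapsto G(I,\beta_0,\tau)$ has at most one critical point (necessarily a maximum), then combine this with the boundary values $G(0)=\beta_0 B/\mu$, $G(\infty)=0$, the information from Lemma~\ref{lem-000-0}, and the intermediate value theorem to handle each case, finishing with the implicit function theorem for smoothness. The only notable difference is in how unimodality is established. The paper computes $\partial_I G$ explicitly as in \eqref{azam-1} and rewrites $\partial_I G=0$ as an equation whose right-hand side is strictly increasing in $I$; you instead factor $G=BN/(\mu+aI)$ and observe that the numerator $f$ of $\partial_I G$ satisfies $f'(I)=N''(I)(\mu+aI)<0$ by concavity of $h$. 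Your route is a bit cleaner since it bypasses the algebraic expansion, but the two arguments are equivalent in substance.
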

\begin{proof}
$(i)$ It is clear from Lemma \ref{lem-000-0} that $\mathlarger{\max_{I\ge 0}G(I,\beta_0,\tau)}<\frac{\mathcal{D}_0B}{\mu}$ for every $0< \beta_0<\mathcal{D}_{0,\tau}$. Hence the result follows.

$(ii)$ Let $ \beta_0\in (\mathcal{D}_{0,\tau}, \mathcal{D}_0)$. Then $G(0,\beta_0,\tau) =\frac{\beta_0B}{\mu}<\frac{\mathcal{D}_0B}{\mu}<\mathlarger{\max_{I\ge 0}G(I,\beta_0,\tau)}$. This shows that $\mathlarger{\max_{I\ge 0}G(I,\beta_0,\tau)}$ is achieved at an interior point $ I_{\max}$. Next, observe that 
\begin{align}\label{azam-1}
 \partial_IG=&\frac{B}{\big( \mu + b_0I \big)^2 }\left( - \beta_0 (\beta_0 + \tau \gamma) + \tau\sum_{j=1}^2\frac{\gamma_j\beta_j(\mu^2-\beta_jb_0  I^2)}{\big(\mu+\beta_jI\big)^2} \right).
 \end{align}
  Hence 
$$
\partial_IG(I,\beta_0,\tau)=0 \Longleftrightarrow   \tau\sum_{j=1}^2\gamma_j\beta_j-\beta_0(\beta_0+\tau\gamma) = \tau\sum_{j=1}^2\frac{\gamma_j\beta_j(2\mu\beta_jI+(\beta_jI)^2+\beta_jb_0  I^2)}{\big(\mu+\beta_jI\big)^2}. 
$$
An easy computation shows that $[0,\infty)\ni I\mapsto \frac{\gamma_j\beta_j(2\mu\beta_jI+(\beta_jI)^2+\beta_jb_0  I^2)}{\big(\mu+\beta_jI\big)^2}$ is strictly increasing for every $j=1,2$. As a result, the equation $\partial_IG=0$ has exactly a unique positive root, which is $I_{\max}$. Moreover, the functions $(0,I_{\max})\ni I \mapsto G(I,\beta_0,\tau)$ and $(I_{\max},\infty)\ni I \mapsto G(I,\beta_0,\tau)$ are strictly increasing and decreasing, respectively. The intermediate value theorem then implies that  there exist unique elements $I_{-}(\beta_0,\tau)\in(0,I_{\max})$ and $I_{+}(\beta_0,\tau)\in(I_{\max},\infty)$ such that $G(I_{\pm}(\beta_0,\tau),\beta_0,\tau)=\frac{\mathcal{D}_0B}{\mu}$. Furthermore, $\partial_{I}G(I_{-}(\beta_0,\tau),\beta_0,\tau)>0$ and  $\partial_{I}G(I_{+}(\beta_0,\tau),\beta_0,\tau)<0$, and the implicit function theorem guarantees that both the functions $ (\beta_0,\tau)\mapsto I_{\pm}(\beta_0,\tau)$ are of class $C^1$.

$ (iii)$ If  $\mathcal{D}_0>\mathcal{D}_{0,\tau}$ then as in case $(ii)$ $\mathlarger{\max_{I\ge 0}G(I,\mathcal{D}_0,\tau)}$ is achieved at an interior point $I_{\max} > 0.$ In this case, we have $I_{-}(\mathcal{D}_0,\tau)= 0$ and $I_{+}(\mathcal{D}_0,\tau)\in (I_{\max},\infty)$ are the two roots of $G=\frac{\mathcal{D}_0B}{\mu}$. Similarly we have $I(\mathcal{D}_{0,\tau},\tau)=I_{\max}>0$.

$(iv)$ Suppose that $\beta_0>\mathcal{D}_0$. Note from above that $\frac{\mathcal{D}_0B}{\mu}<G(0,\beta_0,\tau)=\mathlarger{\min_{0\leq I\leq I_{\max}}G(I,\beta_0,\tau)}$. Thus, as in the above, we can employ the intermediate value theorem to infer that there is exactly a unique solution $I(\beta_0,\tau)>0$ of the equation  $G(I,\beta_0,\tau)=\frac{\mathcal{D}_0B}{\mu}$, with $I(\beta_0,\tau)\in(I_{\max},\infty).$ 
\end{proof}

 The next result complements Theorem \ref{lem-1-0-0-0} and provides necessary and sufficient condition on the parameters for which $\mathcal{D}_{0,\tau}<\mathcal{D}_0$.
 
 \begin{prop}\label{prop-p-1}
 For every $\tau>0$, let $\mathcal{D}_{0,\tau}$ be given by Lemma \ref{lem-000-0} and define   
 \begin{equation}\label{d-tau-equ}
 \mathcal{D}_{1,\tau}=\frac{\frac{2}{\gamma}\sum_{j=1}^2\gamma_j\beta_j}{1+\sqrt{1+\frac{4\tau}{(\tau\gamma)^2}\sum_{j=1}^2\beta_j\gamma_j}}.
 \end{equation} 
The function $\tau\mapsto\mathcal{D}_{1,\tau}$ is strictly increasing and  $\mathcal{D}_0=\mathcal{D}_{0,\tau}$ if and only if $\mathcal{D}_0\ge\mathcal{D}_{1,\tau}$. Therefore if
   $ \mathcal{D}_0\geq \frac{1}{\gamma}\sum_{j=1}^2\gamma_j\beta_j$  then $\mathcal{D}_0=\mathcal{D}_{0,\tau}$ for every $\tau>0$, while if  $ \mathcal{D}_0< \frac{1}{\gamma}\sum_{j=1}^2\gamma_j\beta_j$  there is $\tau^*>0$ such that $\mathcal{D}_0=\mathcal{D}_{0,\tau}$ for every $0<\tau<\tau^*$ and $\mathcal{D}_0>\mathcal{D}_{0,\tau}$ for every $\tau\ge\tau^*$.
 \end{prop}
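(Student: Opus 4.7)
The plan is to reduce the identity $\beta_0^{\tau}=\mathcal{D}_0$ to the sign of $\partial_I G(0,\mathcal{D}_0,\tau)$, then to match the resulting threshold to $\mathcal{D}_{1,\tau}$ by direct algebra.

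Since $G(0,\beta_0,\tau)=\beta_0 B/\mu$, one always has $\max_{I\ge 0}G(I,\mathcal{D}_0,\tau)\ge \mathcal{D}_0 B/\mu$; combined with the strict monotonicity of $\beta_0\mapsto \max_I G$ supplied by Lemma \ref{lem-000-0}, this forces $\beta_0^\tau\le\mathcal{D}_0$ and yields the equivalence $\beta_0^\tau=\mathcal{D}_0$ if and only if $\max_{I\ge 0}G(I,\mathcal{D}_0,\tau)=G(0,\mathcal{D}_0,\tau)$. The latter is in turn equivalent to $I\mapsto G(I,\mathcal{D}_0,\tau)$ being non-increasing on $[0,\infty)$, since the analysis of the critical-point equation already carried out in the proof of Theorem \ref{lem-1-0-0-0}(ii) showed that $\partial_I G(\cdot,\beta_0,\tau)=0$ admits at most one positive root. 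A direct computation yields
$$\partial_I G(0,\beta_0,\tau)=\frac{B}{\mu^2}\Bigl(\tau\sum_{j=1}^2\beta_j\gamma_j-\beta_0(\beta_0+\tau\gamma)\Bigr),$$
which is non-positive precisely when $\beta_0$ is at least the positive root of the quadratic $x^2+\tau\gamma x-\tau\sum_{j=1}^2\beta_j\gamma_j=0$; rationalising this root produces $\mathcal{D}_{1,\tau}$ and gives the biconditional $\beta_0^\tau=\mathcal{D}_0\iff\mathcal{D}_0\ge\mathcal{D}_{1,\tau}$.

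Monotonicity of $\tau\mapsto\mathcal{D}_{1,\tau}$ follows by implicit differentiation of its defining relation. Writing $y(\tau)=\mathcal{D}_{1,\tau}$ and $S=\sum_{j=1}^2\beta_j\gamma_j$, differentiating $y^2+\tau\gamma y=\tau S$ gives $(2y+\tau\gamma)y'=S-\gamma y$, and re-inserting the quadratic identifies the right-hand side with $y^2/\tau>0$. The same relation yields the endpoint values $y(0^+)=0$ and $y(\tau)\nearrow S/\gamma$ as $\tau\to\infty$. The two consequences then fall out immediately: if $\mathcal{D}_0\ge S/\gamma$, then $\mathcal{D}_0>\mathcal{D}_{1,\tau}$ for every $\tau>0$, giving $\beta_0^\tau=\mathcal{D}_0$ throughout; if $\mathcal{D}_0<S/\gamma$, the intermediate value theorem produces the unique $\tau^*>0$ with $\mathcal{D}_{1,\tau^*}=\mathcal{D}_0$, and the two regimes $\tau<\tau^*$ and $\tau\ge\tau^*$ split as in the statement.

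The only delicate step is the upgrade from $\partial_I G(0,\beta_0,\tau)\le 0$ to the global monotonicity of $G(\cdot,\beta_0,\tau)$ on $[0,\infty)$; this would fail without the uniqueness of the interior critical point inherited from Theorem \ref{lem-1-0-0-0}(ii), so the core algebraic work has already been carried out there. The remainder is the rationalisation of a quadratic root plus an intermediate value argument.
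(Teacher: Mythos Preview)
Your proof is correct and follows essentially the same approach as the paper: both reduce the condition $\beta_0^\tau=\mathcal{D}_0$ to the sign of $\partial_I G(0,\mathcal{D}_0,\tau)$ via the critical-point structure established in Theorem \ref{lem-1-0-0-0}(ii), and then identify $\mathcal{D}_{1,\tau}$ as the unique positive root of $\beta_0(\beta_0+\tau\gamma)=\tau\sum_{j=1}^2\gamma_j\beta_j$. Your version is slightly more explicit in deriving the monotonicity of $\tau\mapsto\mathcal{D}_{1,\tau}$ via implicit differentiation and in computing the endpoint limits $\mathcal{D}_{1,0^+}=0$ and $\mathcal{D}_{1,\tau}\nearrow\frac{1}{\gamma}\sum_{j=1}^2\gamma_j\beta_j$ to deduce the two regimes, whereas the paper simply asserts these facts, but the underlying argument is identical.
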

 \begin{proof} It is easy to see that the function $\tau\mapsto\mathcal{D}_{1,\tau}$ is strictly increasing and that $\mathcal{D}_{1,\tau}$ is the only positive solution of the algebraic equation in $\beta_0$ : 
 $$ 
 \beta_0(\beta_0+\tau\gamma)=\tau\sum_{j=1}^2\gamma_j\beta_j.
 $$
Hence  $\mathcal{D}_0 \geq \mathcal{D}_{1,\tau}$ if and only if $\mathcal{D}_0(\mathcal{D}_0+\tau\gamma)\ge\mathcal{D}_{1,\tau}(\mathcal{D}_{1,\tau}+\tau\gamma)=\tau\sum_{j=1}^2\gamma_j\beta_j$ and it is seen from \eqref{azam-1} that 
 $$ 
 \partial_IG(I,\mathcal{D}_0,\tau)<\partial_IG(0,\mathcal{D}_0,\tau)\leq 0 \quad \forall\ I> 0,
 $$
 which implies that $\mathlarger{\max_{I\ge 0}G(I,\mathcal{D}_0,\tau)}=G(0,\mathcal{D}_0,\tau)=\frac{\mathcal{D}_0B}{\mu}$. Whence, we deduce from Lemma \ref{lem-000-0} that $\mathcal{D}_0=\mathcal{D}_{0,\tau}$ since the map $\beta_0\mapsto \mathlarger{\max_{I\ge 0}G(I,\mathcal{D}_0,\tau)} $ is strictly decreasing. 
 
 On the other hand, if  $\mathcal{D}_0< \mathcal{D}_{1,\tau}$, then using again \eqref{azam-1}, we obtain that 
 $ \partial_IG(0,\mathcal{D}_0,\tau)>0$. This implies that $ \mathlarger{\max_{I\ge 0}G(I,\mathcal{D}_0,\tau)}>G(0,\mathcal{D}_0,\tau)=\frac{\mathcal{D}_0B}{\mu}$. Hence by continuity, there is some $\tilde{\beta}_0<\mathcal{D}_0$ close enough such that $\mathlarger{\max_{I\ge 0}G(I,\tilde{\beta_0},\tau)}>\frac{\mathcal{D}_0B}{\mu}$. This shows that $\mathcal{D}_{0,\tau}\leq \tilde{\beta}_0<\mathcal{D}_0$. The proof of Proposition \ref{prop-p-1} is then complete. 
 \end{proof}

\begin{proof}[Proof of Theorem \ref{tm-existence-of-EE}] 
For  $\beta_{0,u}=\beta_0> \mathcal{D}_0$ and $\tau=\frac{q}{\eta}$, the result  follows from Theorem \ref{lem-1-0-0-0} $(iv)$.
\end{proof}

 \begin{rk}
 We note from the proof of proposition \ref{prop-p-1} that $\mathcal{D}_{0,\tau}= \mathcal{D}_0$ if and only if $\mathcal{D}_0(\mathcal{D}_0+\tau\gamma)\ge \tau\sum_{j=0}^2\beta_{j,u}\gamma_j$. Hence thanks to Theorem \ref{lem-1-0-0-0} the following conclusions hold. 
 \begin{enumerate}
     \item[(1)] Assume that $\mathcal{D}_0\big(\mathcal{D}_0+\frac{q}{\eta}\gamma\big)\geq \frac{q}{\eta}\sum_{j=0}^2\beta_{j,u}\gamma_j$. If $\beta_{0,u}\le \mathcal{D}_0$ then  \eqref{e-0-1} has no endemic equilibrium solution.  However, if $\beta_{0,u}>\mathcal{D}_0$ then \eqref{e-0-1} has a unique endemic equilibrium solution (see Figure \ref{FIG2}).
     \begin{figure}[h]
         \centering
         \includegraphics[width=0.7\textwidth,height=.2\textheight]{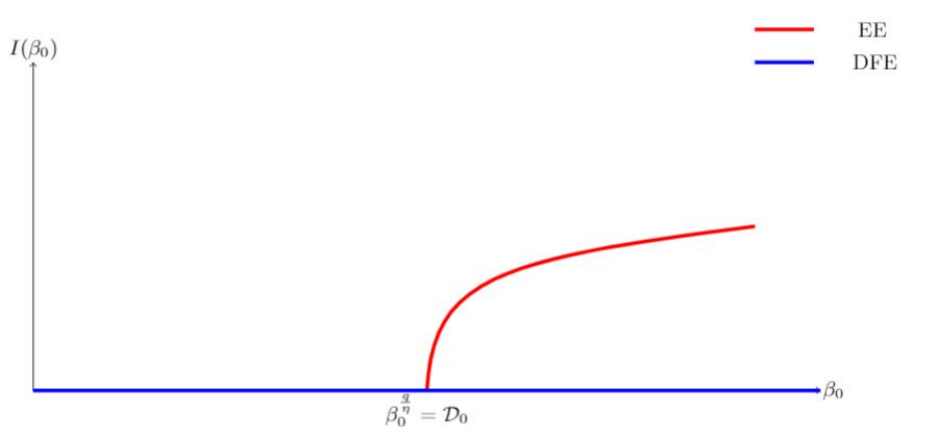}
         \caption{ Schematic graphs of $I$ component  of equilibrium solutions of \eqref{e-0-1} when $\mathcal{D}_0\big(\mathcal{D}_0+\frac{q}{\eta}\gamma\big)\geq \frac{q}{\eta}\sum_{j=0}^2\beta_{j,u}\gamma_j$. This corresponds to case $\mathcal{D}_{0,\frac{q}{\eta}}=\mathcal{D}_0$ in Theorem \ref{lem-1-0-0-0}.}
         \label{FIG2}
\end{figure}

\item[(2)] Assume that $\mathcal{D}_0\big(\mathcal{D}_0+\frac{q}{\eta}\gamma\big)< \frac{q}{\eta}\sum_{j=0}^2\beta_{j,u}\gamma_j$. Then there is $\mathcal{D}_{0,\frac{q}{\eta}}\in[0,\mathcal{D}_0)$ such that if $\mathcal{D}_{0,\frac{q}{\eta}}<\beta_{0,u}<\mathcal{D}_0$ then \eqref{e-0-1} has exactly two endemic equilibrium. And for every $\beta_{0,u}\ge \mathcal{D}_0$, \eqref{e-0-1} has one endemic equilibrium solution (see Figure \ref{FIG3}).
\begin{figure}[h]
         \centering
         \includegraphics[width=0.7\textwidth,height=.2\textheight]{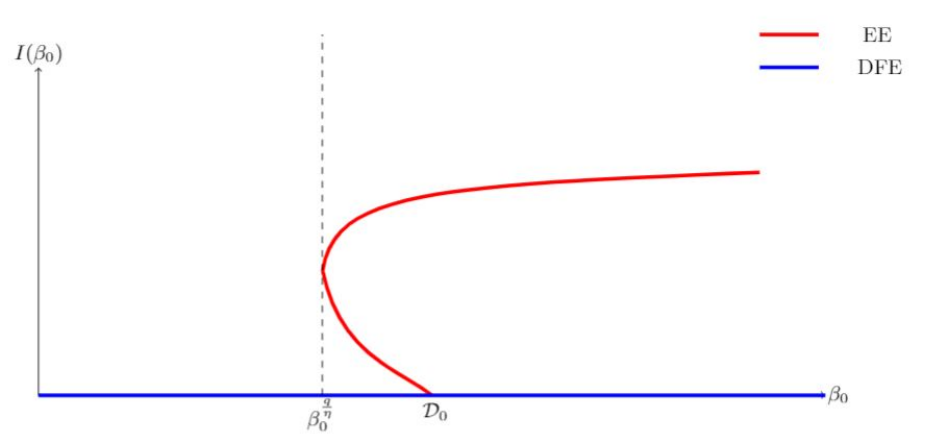}
         \caption{ Schematic graphs of equilibrium solutions of \eqref{e-0-1} when $\mathcal{D}_0\big(\mathcal{D}_0+\frac{q}{\eta}\gamma\big)< \frac{q}{\eta}\sum_{j=0}^2\beta_{j,u}\gamma_j$. This corresponds to the case $0<\mathcal{D}_{0,\frac{q}{\eta}}<\mathcal{D}_0$ in Theorem \ref{lem-1-0-0-0}.}
         \label{FIG3}
\end{figure}
\end{enumerate}
\end{rk}

\begin{proof}[Proof of Theorem \ref{prop-2}]
$(i)$ Recalling the function $G(I,\beta_0,\tau)$ 
 introduced in \eqref{g-defi}, we have 
 $$ 
 \frac{\partial }{\partial\tau }G= \frac{BI\left( \sum_{j=1}^2\frac{\mu\gamma_j(\beta_j-\beta_0)}{\mu+\beta_jI}-\gamma_0\beta_0 \right)}{(\mu +\beta_0I+\tau I\gamma)^2} \quad \forall\ \tau>0, I\ge 0.
 $$
 Whence, since $I^*$ is uniquely determined by the equation $G(I,\beta_0,\frac{q}{\eta})=\frac{\mathcal{D}_0B}{\mu}$ and $\beta_0\geq \max\{\beta_1,\beta_2\}$, the implicit function theorem implies that 
 $$ 
 \frac{\partial}{\partial \tau}I^*=-\frac{\partial_\tau G}{\partial_IG}\begin{cases}
 = 0 &  \text{if}\ \gamma_0= 0 \ \text{and}\ \beta_1=\beta_2=\beta_0\cr
 <0 & \text{if}\ \beta_0>\min\{\beta_1,\beta_2\} \ \text{or}\ \gamma_0> 0.
 \end{cases}\quad 
 $$ 
 Note that we have used Theorem \ref{lem-1-0-0-0} $(iv)$ to conclude that $\partial_IG(I^*,\beta_0,\frac{q}{\eta})<0$. This shows the monotonicity of $I^*$ with respect to $\frac{q}{\eta}$ as stated in the result.  A simple computation shows that 
 $$ 
 G\left(\frac{\mu(\beta_0-\mathcal{D}_0)}{\mathcal{D}_0(\beta_0+\frac{q\gamma}{\eta})},\beta_0,\tau\right)>\frac{\beta_0B}{\mu+\frac{\mu}{\mathcal{D}_0}(\mathcal{\beta}_0-\mathcal{D}_0)}=\frac{\mathcal{D}_0B}{\mu}\quad \forall\ \beta_0,\ \tau>0.
 $$
 
 By \eqref{azam-1}, we have
 \begin{align*}
 \partial_IG=&\frac{B}{\big( \mu + b_0I \big)^2 }\left( - \beta_0 b_0 + \tau\sum_{j=1}^2\frac{\gamma_j\beta_j(\mu^2-\beta_jb_0  I^2)}{\big(\mu+\beta_jI\big)^2} \right)\cr
 < & \frac{B}{\big( \mu + b_0I \big)^2 }\left( - \beta_0^2-\tau\sum_{j=0}^2\gamma_j\beta_0+ \tau\sum_{j=1}^2\gamma_j\beta_j \right)\le 0 \quad\forall\ I,\tau > 0 \quad \text{and} \ \beta_0\geq \max\{\beta_1,\beta_2\}.
 \end{align*}
 Hence, since $\beta_0=\max\{\beta_0,\beta_1,\beta_2\}$ then the function $ (0,\infty)\ni I\mapsto G(I,\beta_0,\frac{q}{\eta})$ is strictly decreasing, we conclude that the left hand side inequality of  \eqref{asymp-of-I-star-1} holds.  Observe that 
 $$ 
 B=\mu\sum_{j=0}^2S_{j}^*+\frac{\mathcal{D}_0B}{\mu}I^*+\gamma_0Z^*S_0^*.
 $$ 
 Hence $I^*\leq \frac{\mu}{\mathcal{D}_0}$. Thus the right hand side of \eqref{asymp-of-I-star-1} follows from \eqref{amz-1}.
 
 $(ii)$ Next we proceed by contradiction to show  that \eqref{asymp-of-I-star} holds under hypothesis $\mathcal{D}_0\geq \frac{1}{\gamma}\sum_{j=1}^2\beta_j\gamma_j$. We first note from the monotonicity  of $I^*(\beta_0,\frac{q}{\eta})$ with respect to $\frac{q}{\eta}$ that there is $I^*_\infty(\beta_0)\in[0,\frac{B}{\mu})$ such that 
 $$ \lim_{\tau\to\infty}I^*(\beta_0,\tau)=I^*_{\infty}(\beta_0).$$
 Now, we claim that $I^*_{\infty}:=I^*_{\infty}(\beta_0)=0$. If not, i.e, $I^*_\infty>0$, letting $\frac{q}{\eta}\to\infty$ in the equation $ 
 G(I^*,\beta_0,\frac{q}{\eta})= \frac{B \mathcal{D}_0}{\mu}$ gives 
 $$ 
 \frac{B\mathcal{D}_0}{\mu}=\frac{B\sum_{j=1}^2\frac{\beta_j\gamma_jI^*_\infty}{\mu+\beta_jI^*_\infty}}{I^*_\infty\gamma}=\frac{B}{\gamma}\sum_{j=1}^2\frac{\beta_j\gamma_j}{\mu+\beta_jI^*_\infty}<\frac{B}{\mu\gamma}\sum_{j=1}^2\beta_j\gamma_j
 $$
 which is in contradiction with hypothesis $\mathcal{D}_0\geq \frac{1}{\gamma}\sum_{j=1}^2\beta_j\gamma_j$. Thus \eqref{asymp-of-I-star} holds under hypothesis $\mathcal{D}_0\geq \frac{1}{\gamma}\sum_{j=1}^2\beta_j\gamma_j$. Next,  suppose that hypothesis $\mathcal{D}_0> \frac{1}{\gamma}\sum_{j=1}^2\beta_j\gamma_j$ holds and choose a positive constant $C_1\gg 1$ such that
 $$ 
 \frac{B}{\mu+\gamma C_1}\left( \beta_0+C_1\sum_{j=1}^2\frac{\beta_j\gamma_j}{\mu} \right)<\frac{\mathcal{D}_0 B}{\mu}.
 $$
 With this choice of the constant $C_1$ we obtain
 $$ 
 \lim_{\tau\to\infty}G(C_1\tau^{-1},\beta_0,\tau)= \frac{B}{\mu+\gamma C_1}\left( \beta_0+C_1\sum_{j=1}^2\frac{\beta_j\gamma_j}{\mu} \right)<\frac{\mathcal{D}_0B}{\mu}.
 $$
 Hence, there is $\tau_*\gg 1$ such that 
 $$ 
 I^*(\beta_0,\frac{q}{\eta})\leq \frac{C_1}{q/\eta} \quad \quad \forall\ \frac{q}{\eta}\geq \tau_*. 
 $$
 Therefore, since $I^*(\beta_0,\frac{q}{\eta})$ is uniformly bounded above by $\frac{B}{\mu}$,  we conclude that the right inequality of \eqref{asymp-of-I-star-3} holds with $C=\max\{C_1,\frac{B\tau_*}{\mu} \}$. Clearly the left inequality of \eqref{asymp-of-I-star-3} follows from \eqref{asymp-of-I-star-1}. 
 
 $(iii)$ Suppose that $ \mathcal{D}_0<\frac{1}{\gamma}\sum_{j=1}^2\beta_j\gamma_j$. Let $I^*_{\infty}(\beta_0)$ denote the unique positive solution of the algebraic equation
 $$ 
 \frac{\mathcal{D}_0B}{\mu}=\frac{B}{\gamma}\sum_{j=1}^2\frac{\beta_j\gamma_j}{\mu+\beta_jI^*_{\infty}}.
 $$
 For every $0<\varepsilon\ll 1$, using the fact that 
 $$ 
 \lim_{\tau\to\infty}G((1\pm\varepsilon)I^*_\infty,\beta_0,\tau)=\frac{B}{\gamma}\sum_{j=1}^2\frac{\beta_j\gamma_j}{\mu+(1\pm\varepsilon)\beta_jI^*_{\infty}},
 $$
 then it follows from the fact that $\frac{\mathcal{D}_0B}{\mu}=G(I^*,\beta_0,\frac{q}{\eta})$ and the fact that $I\mapsto G(I,\beta_0,\tau)$ is strictly decreasing that there is $\tau_\varepsilon\gg 0$ such that 
 $$ 
 (1-\varepsilon)I^*_{\infty}< I^*\left(\beta_0,\frac{q}{\eta}\right)<(1+\varepsilon)I^*_{\infty} \quad \forall\ \frac{q}{\eta}\ge \tau_{\varepsilon}.
 $$
 Thus we conclude that $\mathlarger{\lim_{\frac{q}{\eta}\to\infty}I^*\left(\beta_0,\frac{q}{\eta}\right)}=I^*_\infty(\beta_0)$.
 The proof of the Theorem is complete.
\end{proof}


The proof of Theorem \ref{prop-3} follows similar arguments as those used in the proof of Theorem \ref{prop-2}. So, in order to avoid repeating the same arguments, we do not include its proof. We end this section with the proof of Theorem \ref{Th-5}. Our method to prove this result is based on the bifurcation theory.  Suppose $u=0$ and $\beta_j=\beta_{j,0}$, $j=0,1,2$ and let ${\bf E}= (S_0,S_1,S_2,Z,I)^T $  and 
$F(\beta_0,{\bf E})$ be the vector field to the right hand side of \eqref{e-0-1}, and we consider the parameter $\beta_0$ as our bifurcation parameter. Note that the function $(\beta_0,{\bf E})\mapsto F(\beta_0,{\bf E})$ is of class $C^{\infty}$.  Recalling \eqref{linear-syst}, we have that 
$$ 
D_{\bf E}F(\beta_0,{\bf E}^0)=\mathcal{M}_{0}(0,{\bf E}^0), \quad \forall\ \beta_0>0
$$
and 
\begin{equation}\label{z-z-20}
\mathcal{K}:=D_{\beta_0{\bf E}}F(\beta_0,{\bf E}^0)=\left[
\begin{array}{ccccc}
    0 & 0 &  0  &0 & -\frac{B}{\mu}    \cr
    0 & 0 &  0   &     0           & 0  \cr 
    0 & 0 &  0   &     0           & 0   \cr
    0 & 0 &  0   &     0           & 0  \cr
    0 & 0 &  0   &     0           & \frac{B}{\mu}
\end{array}
\right]
\end{equation}
where $ D_{\bf E}F(\beta_0,{\bf E}^0)$ stands for the Jacobian matrix with respect to $ {\bf E}$ only and $ D_{\beta_0{\bf E}}F(\beta_0,{\bf E}^0)$ the partial derivative of the jacobian matrix with respect to $\beta_0$. Moreover, it follows from \eqref{p-1} that
$$ 
r(\beta_0):=\frac{\beta_0B}{\mu}-(\mu+d)=\frac{B}{\mu}(\beta_0-\mathcal{D}_0)\quad \quad \forall \beta_0>\mathcal{D}_0-\frac{\mu}{B}\min\{\mu,\eta\}
$$
is the maximal eigenvalue of $ D_{\bf E}F(\beta_0,{\bf E}^0)$. The following lemma shows that $r(\mathcal{D}_0)$ is a $\mathcal{K}-$simple eigenvalue of $D_{\bf E}F(\mathcal{D}_0,{\bf E}^0)$. We denote by $\{{\bf e}_0,{\bf e}_1,{\bf e}_2,{\bf e}_3,{\bf e}_4\}$  the canonical basis of $\mathbb{R}^5$. Given a 5 by 5 square matrix $M $, we denote by $\mathcal{N}(M)$ and $\mathcal{R}(M)$ the kernel and range of the linear operator induced by $M$ on $\mathbb{R}^5$, respectively. 

\begin{lem} \label{lem-3-2} Let $\beta_0> \mathcal{D}_0-\frac{\mu}{B}\min\{\mu,\eta\}$ and define $\eta(\beta_0)=\eta+r(\beta_0)$, $\mu(\beta_0)=\mu+r(\beta_0)$ and 
$$ 
{\bf J}^0(\beta_0):=\left(-\frac{B}{\mu\mu(\beta_0)}\left(\beta_0+\frac{\gamma q}{\eta(\beta_0)}\right),\frac{Bq\gamma_1}{\mu\mu(\beta_0)\eta(\beta_0))},\frac{Bq\gamma_2}{\mu\mu(\beta_0)\eta(\beta_0)},\frac{q}{\eta(\beta_0)},1\right)^T.$$
The following hold.
\begin{itemize}
\item[(i)] $r(\mathcal{D}_0)=0$ is a $\mathcal{K}-$simple eigenvalue of $D_{\bf E}F(\mathcal{D}_0,{\bf E}^0)$. In fact, it holds that  
$$ 
\mathcal{N}(D_{\bf E}F(\mathcal{D}_0,{\bf E}^0))=span\{ {\bf J}^0(\mathcal{D}_0)\} \quad \text{and}\quad \mathcal{K}{\bf J}^0(\mathcal{D}_0)\notin \mathcal{R}(D_{\bf E}F(\mathcal{D}_0,{\bf E}^0))=\mathbb{R}^4\times\{0\}. 
$$
\item[(ii)] $r(\beta_0)$ is a $\mathcal{I}-$simple eigenvalue of $D_{\bf E}F(\beta_0,{\bf E}^0)$. More precisely, it holds that 
$$ 
\mathcal{N}(D_{\bf E}F(\beta_0,{\bf E}^0)-r(\beta_0)\mathcal{I})=span\{ {\bf J}^0(\beta_0)\} \quad \text{and}\quad {\bf J}^0(\beta_0)\notin \mathcal{R}(D_{\bf E}F(\beta_0,{\bf E}^0))=\mathbb{R}^4\times\{0\}.
$$
\end{itemize}
\end{lem}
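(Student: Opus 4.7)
The plan is to exploit the fact that $D_{\bf E}F(\beta_0,{\bf E}^0)$ is precisely $\mathcal{M}_0(0;{\bf E}^0)$ from \eqref{p-0}, and to substitute $\tilde S_0 = B/\mu$, $\tilde S_1=\tilde S_2=\tilde Z=\tilde I=0$ directly into that formula. The resulting matrix is upper triangular with diagonal entries $-\mu,-\mu,-\mu,-\eta,r(\beta_0)$, so I can read off its spectrum immediately and, more importantly, I can solve any linear system against it by straightforward back-substitution.

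For part (i), at $\beta_0=\mathcal{D}_0$ we have $r(\mathcal{D}_0)=0$ and the remaining diagonal entries $-\mu,-\eta$ are strictly negative, so $0$ is algebraically and geometrically simple. To produce the null vector, I would solve $\mathcal{M}_0(0;{\bf E}^0){\bf v}=0$ from the bottom up: the last row is identically zero so $v_5$ is free (normalize $v_5=1$); the fourth row gives $v_4=q/\eta$; the second and third rows give $v_2=Bq\gamma_1/(\mu^2\eta)$ and $v_3=Bq\gamma_2/(\mu^2\eta)$; and the first row gives $v_1=-\frac{B}{\mu^2}\bigl(\beta_0+\gamma q/\eta\bigr)$. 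This is exactly ${\bf J}^0(\mathcal{D}_0)$ since $\mu(\mathcal{D}_0)=\mu$ and $\eta(\mathcal{D}_0)=\eta$. The range of the matrix coincides with $\mathbb{R}^4\times\{0\}$ because its fifth row vanishes and a dimension count (rank $4$) gives equality. For transversality I multiply: using the explicit form \eqref{z-z-20} of $\mathcal{K}$, only the last coordinate of ${\bf J}^0(\mathcal{D}_0)$ contributes, producing $\mathcal{K}{\bf J}^0(\mathcal{D}_0)=\bigl(-B/\mu,0,0,0,B/\mu\bigr)^T$. Its fifth coordinate is $B/\mu\neq 0$, so it does not lie in $\mathbb{R}^4\times\{0\}$, completing the $\mathcal{K}$-simplicity verification.

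For part (ii), I apply exactly the same back-substitution argument to $D_{\bf E}F(\beta_0,{\bf E}^0)-r(\beta_0)\mathcal{I}$, whose diagonal entries are $-\mu(\beta_0),-\mu(\beta_0),-\mu(\beta_0),-\eta(\beta_0),0$. The hypothesis $\beta_0>\mathcal{D}_0-\frac{\mu}{B}\min\{\mu,\eta\}$ is precisely what guarantees $\mu(\beta_0),\eta(\beta_0)>0$, so the non-zero diagonal entries are genuinely non-zero and the same ranking gives a one-dimensional kernel. Solving with $v_5=1$ yields ${\bf J}^0(\beta_0)$ after carrying the factors $\mu(\beta_0)$ and $\eta(\beta_0)$ through the recursion. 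The fifth row of the shifted matrix vanishes, so its range is again $\mathbb{R}^4\times\{0\}$ (I read the statement's identification $\mathcal{R}(D_{\bf E}F(\beta_0,{\bf E}^0))=\mathbb{R}^4\times\{0\}$ as a shorthand for the shifted operator, since that is the relevant object for $\mathcal{I}$-simplicity); and because ${\bf J}^0(\beta_0)$ has fifth coordinate $1$, clearly ${\bf J}^0(\beta_0)\notin \mathbb{R}^4\times\{0\}$.

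There is essentially no obstacle beyond careful bookkeeping. The only subtle point is making sure the normalization and signs in the back-substitution match the stated formula for ${\bf J}^0(\beta_0)$, and keeping $r(\beta_0)$, $\mu(\beta_0)$, $\eta(\beta_0)$ straight as $\beta_0$ varies; the hypothesis on $\beta_0$ is exactly what makes $\mu(\beta_0)$ and $\eta(\beta_0)$ non-vanishing so that back-substitution does not blow up. Everything else is direct linear algebra enabled by the triangular structure of the Jacobian at the disease-free equilibrium.
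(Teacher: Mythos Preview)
Your proposal is correct and follows essentially the same approach as the paper, which simply asserts the kernel and range formulas ``are easily seen'' and that part (ii) ``can also be verified by inspection''; you have merely supplied the explicit back-substitution that the paper omits. Your reading of the range identity in (ii) as referring to the shifted operator $D_{\bf E}F(\beta_0,{\bf E}^0)-r(\beta_0)\mathcal{I}$ is the correct one (the unshifted matrix is invertible when $r(\beta_0)\neq 0$), and this is indeed what $\mathcal{I}$-simplicity requires.
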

\begin{proof}$(i)$
It is easily seen that with $ 
{\bf J}^0(\mathcal{D}_0)=\Big(-\frac{B}{\mu^2}(\mathcal{D}_0+\frac{\gamma q}{\eta}),\frac{Bq\gamma_1}{\mu^2\eta},\frac{Bq\gamma_2}{\mu^2\eta},\frac{q}{\eta},1\Big)^T$ 
it holds that $$ 
\mathcal{N}(D_{\bf E}F(\mathcal{D}_0,{\bf E}^0))=span\{ {\bf J}^0(\mathcal{D}_0)\} \ \text{and}\ \mathcal{R}(D_{\bf E}F(\mathcal{D}_0,{\bf E}^0))=\{\bf{E}\in\mathbb{R}^5\ ;\ <{\bf E},{\bf e}_4>=0\}. 
$$
 Thus $\mathcal{K}{\bf J}^0(\mathcal{D}_0)=(-\frac{B}{\mu},0,0,0,\frac{B}{\mu})^T\notin \mathcal{R}(D_{\bf E}F(\mathcal{D}_0,{\bf E}^0))$.

 $(ii)$ It can also be verified by inspection.
\end{proof}
Next, using  Lemma \ref{lem-3-2}, Lemma \ref{lem-1-0-0-0}, and the theory of bifurcation from simple eigenvalues \cite{Crandall1,crandall2}, we can now present the proof of Theorem \ref{Th-5}.

\begin{proof}[Proof of Theorem \ref{Th-5}]
Let us suppose that all the parameters $q,\eta,\mu,d,\gamma_1,\gamma_2,\gamma_0>0$ and $\beta_1,\beta_2\in(0,1)$ are fixed and emphasize only on  the dependence of the endemic equilibrium ${\bf E}^*(\beta_0)$ with respect to $\beta_0>\mathcal{D}_0-\frac{\mu}{B}\min\{\mu,\eta\}$. Thanks to Lemma \ref{lem-3-2} $(i)$ and \cite[Theorem 1.7]{crandall2}, there exist some $\varepsilon>0$ and smooth functions 
$$ 
\varphi \ : \ (-\varepsilon, \varepsilon) \to \mathbb{R} \ \ \text{with}\ \varphi(0)=0
$$
and 
$$ 
\Phi \ : \ (-\varepsilon,\varepsilon)\ \to span\{{\bf J}^0(\mathcal{D}_0)\}^T \quad \text{with}\ \Phi(0)=0
$$
such that $ F(\beta_0(s),{\bf E}(s))=0$ for every $|s|<\varepsilon$, where 
\begin{equation}\label{last 1}
\beta_0(s)=\mathcal{D}_0+\varphi(s)\quad \text{and}\quad {\bf E}(s)={\bf E}^0+s{\bf J}^0(\mathcal{D}_0)+s\Phi(s)
\end{equation}
with $span\{{\bf J}^0(\mathcal{D}_0)\}^T$ is the orthogonal complement of $ span\{{\bf J}^0(\mathcal{D}_0)\}$. Moreover Lemma \ref{lem-3-2} $(ii)$ and \cite[Corollary 1.13 \& Theorem 1.16]{Crandall1} imply that there exist smooth functions $r^* : (-\varepsilon, \varepsilon)\to \mathbb{R}$ and ${\bf J}^* \ :\ (-\varepsilon,\varepsilon) \to \mathbb{R}^5 $ satisfying 
\begin{align*}
& (i)\quad  D_{{\bf E}}F(\beta_0(s),{\bf E}(s))({\bf J}^*(s))=r^*(s){\bf J}^*(s)\quad \forall\ s\in(-\varepsilon,\varepsilon)\cr
& (ii) \quad {\bf J}^*(0)={\bf J}^0(\mathcal{D}_0) \ \text{and}\ r^*(0)=r(\mathcal{D}_0)=0\cr
& (iii) \quad r^*(s)\ \text{and}\ -s\varphi'(s)r'(\mathcal{D}_0) \ \text{have the same zeros and, same sign whenever } r'(\mathcal{D}_0)\ne0,
\end{align*}
where $r'(\cdot)$ is the derivative of $r(\cdot)$.
From the expression $r(\beta_0)=\frac{B}{\mu}(\beta_0-\mathcal{D}_0)$ it easily follows that $r'(\beta_0)=\frac{B}{\mu}$ for every $\beta_0$. Thus it follows from $(iii)$ that $r^*(s)$ has the same sign as $-s\varphi'(s)$ for $|s|$ small.  Next, we determine the sign of $\varphi'(s)$ for $|s|\ll 1$.  Observe from \eqref{last 1} that 
$$ 
\frac{1}{s}<{\bf E}(s),{\bf e}_4>=1+<\Phi(s),{\bf e}_4>\ \longrightarrow 1\  \text{as}\ s\to 0.
$$ This shows that $I'(0)=1$ and hence ${\bf E}(s)$ is an equilibrium solution of \eqref{e-0-1} for which $I(s)\ne 0$ for $0<|s|\ll 1$. As a result, recalling the function $G$ introduced in \eqref{g-defi}, we must have that $G(I(s),\beta_0(s),\frac{q}{\eta})=\frac{\mathcal{D}_0B}{\mu}$ for $0<|s|\ll 1$. This is equivalent to saying that 
$$ 
\left(\mu+\left(\beta_0(s)+\frac{q}{\eta}\gamma\right)I(s)\right)\frac{\mathcal{D}_0}{\mu}-\beta_0(s)= \frac{q}{\eta}\sum_{j=1}^2\frac{\beta_j\gamma_jI(s)}{\mu+\beta_jI(s)} \quad \forall |s|\ll 1.
$$
Taking the derivative of both sides with respect to $s$ yields 
\begin{equation}\label{last-2} 
\frac{\mathcal{D}_0}{\mu}\left( \beta_0'(s) I(s) +\left(\beta_0(s)+\frac{q}{\eta}\gamma\right)I'(s)\right)-\beta_0'(s)=\frac{\mu qI'(s)}{\eta}\sum_{j=1}^2\frac{\beta_j\gamma_j}{(\mu+\beta_jI(s))^2}.
\end{equation}
Evaluating this equation at $s=0$ and using the fact that  $\beta_0(0)=\mathcal{D}_0$, $I(0)=0$ and $I'(s)=1$ yield
$$ 
\varphi'(0)=\beta_0'(0)=\frac{1}{\mu}\left(\mathcal{D}_0\left( \mathcal{D}_0+\frac{q}{\eta}\gamma\right)-\frac{q}{\eta}\sum_{j=1}^2\beta_j\gamma_j\right).
$$
From this point, we distinguish two cases.\\
{\bf Case 1}. $\varphi'(0)>0$, so that  there is a transcritical bifurcation  at $\beta_0=\mathcal{D}_0$.  Then  $\beta_0(s)>\mathcal{D}_0$ for $0<s\ll 1$ and ${\bf E}(s)$ is an endemic equilibrium of \eqref{e-0-1} corresponding to $\beta_0=\beta_0(s)$ for $0<s\ll 1$. Using $(iii)$ we conclude that $r^*(s)<0$ and $\varphi'(s)>0$ for every $0<s\ll 1$. It is clear from $(i)$ that $r^*(s)$ is an eigenvalue of $D_{{\bf E}}F(\beta_0(s),{\bf E}(s))$. Hence, recalling  that the eigenvalues of $ D_{{\bf E}}F(\beta_0,{\bf E}^0(\beta_0))$ consist of the set $\{-\mu,-\eta,r(\beta_0)\}$ with $r(\beta_0)>0$ as the largest eigenvalue, we conclude that there is some $0<\varepsilon^*<\varepsilon$ such that $r^*(s)$ is also the largest eigenvalue of $D_{{\bf E}}F(\beta_0(s),{\bf E}(s))$ for $s\in (-\varepsilon^*,\varepsilon^*)$. Therefore, ${\bf E}(\beta_0(s))$ is linearly stable for every $s\in (0,\varepsilon^*)$. Now, recalling Theorem \ref{lem-1-0-0-0}, we know that the only endemic equilibrium  of \eqref{e-0-1} corresponding to $ \beta_0(s)$ is ${\bf E}^*(\beta_0(s))$. Thus we have that ${\bf E}^*(\beta_0(s))={\bf E}(s)$ and  $s\in (0,\varepsilon^*)$. Since the function $(0,\varepsilon^*)\ni s\mapsto \beta_0(s)$ is strictly increasing and hence invertible, then for every $\beta_0\in(\mathcal{D}_0,\beta_0(\varepsilon^*))$, we have that ${\bf E}^*(\beta_0)$  is linearly stable. 
\\
{\bf Case 2.} $\varphi'(0)=0$, that is, we have a pitchfork bifurcation at $\beta_0=\mathcal{D}_0$. Differentiating again \eqref{last-2} with respect to $s$ gives 
\begin{align*}
&\frac{\mathcal{D}_0}{\mu}\left(\beta_0''(s)I(s)+2\beta_0'(s)I'(s)+\left(\beta_0(s)+\frac{q}{\eta}\gamma\right)I''(s)\right)-\beta_0''(s)\cr
=&\frac{\mu q I''(s)}{\eta}\sum_{j=1}^2\frac{\beta_j\gamma_j}{(\mu+\beta_jI)^2}-2\frac{\mu q[I'(s)]^2}{\eta}\sum_{j=1}^2\frac{\beta_j^2\gamma_j}{(\mu+\beta_jI(s))^3}.
\end{align*}
Evaluating the last equation at $s=0$ and using the fact that $\beta'(0)=0$, i.e, $\mathcal{D}_0\big( \mathcal{D}_0+\frac{q}{\eta}\gamma\big)=\frac{q}{\eta}\sum_{j=1}^2\beta_j\gamma_j$, we obtain 
$$ 
\beta_0''(s)=2\frac{q}{\eta \mu^2}\sum_{j=1}^2\beta_j^2\gamma_j>0.
$$
Thus we obtain that $\varphi'(s)>0$ for $0<s\ll 1$, so we are back to {\bf case 1} and we can employ the same arguments to deduce that ${\bf E}(\beta_0)$ is linearly stable for $\beta_0\in(\mathcal{D}_0,\beta_0(\varepsilon^*))$.

In either case  we may now take  $$\beta_{0,\max}:=\sup\{\tilde{\beta}>\mathcal{D}_0\ :\ \max\{Re(\lambda)\ :\ \lambda\in\sigma(D_{{\bf E}}F(\beta_0,{\bf E}(\beta_0)))\} <0 \ \ \forall\ \mathcal{D}_0<\beta_0<\tilde{\beta}_0  \},$$
where $ \sigma(D_{{\bf E}}F(\beta_0,{\bf E}(\beta_0)))$ is the spectrum of $D_{{\bf E}}F(\beta_0,{\bf E}(\beta_0))$  and $Re(\lambda)$ is the real part of the complex number $\lambda$. This completes the proof of Theorem \ref{Th-5}.

\end{proof}
\section{Permanence of disease  when $\beta_{0,u}>\mathcal{D}_0$.}\label{Sec-for-persistence}
In this section,  we discuss the permanence of the disease when $\beta_{0,u}>\mathcal{D}_0$. Recall again that we have set  $\beta_{j}=\beta_{j,u}$ for each $j=0,1,2$ in order to simplify notations. To this end, we first prove the following lemma.
 
 \begin{lem}[Uniform weak permanence]\label{weak-persistence} Let $u\ge 0$ be fixed and
 suppose that $\beta_0>\mathcal{D}_0$. Then 
 for every solution  $(S_0(t),S_1(t),S_2(t),Z(t),I(t))$ of \eqref{e-0-1} with initial in  $\mathcal{X}_u$ satisfying $I(s)>0$ for every $s\in[-u,0]$,
 it holds that
 \begin{equation}\label{l-2}
 \overline{I}:=\limsup_{t\to\infty}I(t)\geq \underline{m}:= \frac{\mu(\beta_0-\mathcal{D}_0)}{\mathcal{D}_0(\beta_0+\frac{q\gamma}{\eta})} >0.
 \end{equation}
 
 \end{lem}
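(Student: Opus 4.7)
The plan is to argue by contradiction: suppose $\overline{I}:=\limsup_{t\to\infty}I(t)<\underline{m}$. The strategy is to propagate this assumed smallness of $I$ through the $Z$ and $S_0$ equations to obtain a tail lower bound on $S_0(t)$ that keeps $\beta_0 S_0$ strictly above $\mu+d$, then substitute this back into the $I$-equation to produce a delay-differential inequality of the form $\dot I(t)\geq a\,I(t-u)-b\,I(t)$ with $a>b$ on a tail $[T_0+u,\infty)$, and finally extract a contradiction via a standard Lyapunov-type functional.

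From $\dot Z=qI-\eta Z$ and the comparison principle for ODEs, $\limsup_{t\to\infty}Z(t)\leq \tfrac{q}{\eta}\overline{I}$. Rewriting the $S_0$-equation as
\begin{equation*}
\dot S_0(t)\geq B-\bigl(\mu+\gamma Z(t)+\beta_0 I(t-u)\bigr)S_0(t)
\end{equation*}
and applying comparison again yields
\begin{equation*}
\liminf_{t\to\infty}S_0(t)\geq S_0^{\inf}:=\frac{B}{\mu+\bigl(\beta_0+\tfrac{q\gamma}{\eta}\bigr)\overline{I}}.
\end{equation*}
A short algebraic manipulation shows that the hypothesis $\overline{I}<\underline{m}$ is exactly equivalent to $\beta_0 S_0^{\inf}>\mu+d$. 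One can therefore fix $\epsilon>0$ small enough that $a:=\beta_0(S_0^{\inf}-\epsilon)>b:=\mu+d$, and choose $T_0\gg 1$ so that $S_0(t)\geq S_0^{\inf}-\epsilon$ for every $t\geq T_0$. Dropping the non-negative terms $\beta_1 S_1 I(t-u)$ and $\beta_2 S_2 I(t-u)$ from the right-hand side of the $I$-equation then delivers the delay inequality $\dot I(t)\geq a\,I(t-u)-b\,I(t)$ on $[T_0+u,\infty)$.

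For the concluding step, introduce the functional
\begin{equation*}
V(t):=I(t)+a\int_{t-u}^{t}I(s)\,ds,
\end{equation*}
so that $\dot V(t)\geq (a-b)I(t)\geq 0$ on the same tail. Thus $V$ is non-decreasing, and since $I$ is uniformly bounded on $\mathcal{X}_u$, $V$ is bounded as well and converges to a finite limit $V_\infty$. Integrating $\dot V\geq (a-b)I$ shows that $I\in L^1([T_0+u,\infty))$, and uniform continuity of $I$ (from the a priori boundedness of $\dot I$, all of whose constituent factors are uniformly bounded) forces $I(t)\to 0$. But then $V(t)\to 0$, which contradicts both the monotonicity of $V$ and $V(T_0+u)>0$; the latter follows from the strict positivity of $I(t)$ for all $t\geq 0$, propagated from $I(s)>0$ on $[-u,0]$ via $\dot I\geq -(\mu+d)I$. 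The main technical point is the algebraic equivalence $\overline{I}<\underline{m}\Leftrightarrow\beta_0 S_0^{\inf}>\mu+d$, which pinpoints the specific constant $\underline{m}$; once this is established, the Lyapunov functional argument runs smoothly.
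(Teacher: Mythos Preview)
Your proof is correct and follows the same overall contradiction strategy as the paper: assume $\overline{I}<\underline{m}$, propagate this smallness through the $Z$- and $S_0$-equations to obtain a tail lower bound on $S_0$, and arrive at the delay inequality $\dot I(t)\ge a\,I(t-u)-b\,I(t)$ with $a>b$. The paper's $\lambda_1,\lambda_2,\lambda_3$ slack parameters play exactly the role of your single $\epsilon$, and the key algebraic identification of $\underline{m}$ is the same.

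The only genuine difference is in how the contradiction is extracted from the delay inequality. The paper invokes the characteristic equation $\lambda+b-ae^{-\lambda u}=0$, notes it has a positive root when $a>b$, and then asserts that the solution of the linear comparison problem with positive initial data blows up exponentially, whence $I(t)\to\infty$ by comparison. Your route via the functional $V(t)=I(t)+a\int_{t-u}^{t}I(s)\,ds$ and a Barbalat-type argument is more self-contained: it sidesteps the (standard but not entirely trivial) fact that positive solutions of the linear delay equation grow unboundedly when $a>b$. The paper's approach is quicker once that fact is taken for granted; yours requires no appeal to the spectral theory of delay equations and works directly from the monotonicity of $V$ and integrability of $I$. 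Both are perfectly valid endings to the same argument.
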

\begin{proof}
We proceed by contradiction to establish the result. So, we suppose that there is a positive  solution  $(S_0(t),S_1(t),S_2(t),Z(t),I(t))$ of \eqref{e-0-1} with initial in  $\mathcal{X}_u$ satisfying $I(s)>0$ for every $s\in[-u,0]$ and $\lambda_1\in(0,1)$ such that 
\begin{equation}\label{l-3} 
  \overline{I}:=\limsup_{t\to\infty}I(t)< \lambda_1\underline{m}.
\end{equation}
We fix $\lambda_2\in(0,1-\lambda_1)$. Using the definition of limsup, there is $t_1\gg 1$ such that 
\begin{equation}\label{l-4} 
\sup_{t\geq t_1}I(t)\le (\lambda_1+\lambda_2)\underline{m}.
\end{equation}
Recalling the equation satisfied by $\frac{d}{dt}Z(t)$, we obtain that 
\begin{equation*}\label{l-4-1}
    \frac{d}{dt}Z(t)\leq (\lambda_1+\lambda_2)q\underline{m}-\eta Z(t)\quad \forall\ t\geq t_1.
\end{equation*}
Thus, if we fix $\lambda_3\in(0,1-\lambda_1+\lambda_2)$,
there is some $t_{2}>t_1+u$ such that \begin{equation}\label{l-4-2}
    Z(t)\leq \frac{(\lambda_1+\lambda_2+\lambda_3)q\underline{m}}{\eta}\quad  \forall\ t\geq t_{2}.
\end{equation}
It then follows from \eqref{l-4} and \eqref{l-4-2} that 
$$ 
\frac{d}{dt}S_{0}(t)\geq B-\left(\mu+\left((\lambda_1+\lambda_2)\beta_0+\frac{(\lambda_1+\lambda_2+\lambda_3)q\gamma}{\eta}\right)\underline{m}\right)S_{0}, \quad\ \forall\ t\geq t_{2}.
$$
As a result, 
there is $t_{3}>t_{2}$ such that 
\begin{equation}\label{l-4-3}
    S_{0}(t)\geq \frac{B}{\mu+(\lambda_1+\lambda_2+\lambda_3)\left(\beta_0+\frac{q\gamma}{\eta}\right)\underline{m}}\quad \forall\ t\geq t_{3}.
\end{equation}
This in turn implies that 
\begin{equation}\label{l-4-5}
\frac{d}{dt}I(t)\geq \left(\frac{\beta_0B}{\mu+(\lambda_1+\lambda_2+\lambda_3)\left(\beta_0+\frac{q\gamma}{\eta}\right)\underline{m}}\right)I(t-u)-\frac{B\mathcal{D}_0}{\mu}I\quad t\geq t_{3}.
\end{equation}
Observe from the formula of $\left(\beta_0+\frac{q\gamma}{\eta}\right)\underline{m}=\frac{\mu({\beta}_0-\mathcal{D}_0)}{\mathcal{D}_0} $  and the fact that $\lambda_1+\lambda_2+\lambda_3<1 $ that 
\begin{align*}
\frac{\beta_0B\mu}{\left(\mu+(\lambda_1+\lambda_2+\lambda_3)\left(\beta_0+\frac{q\gamma}{\eta}\right)\underline{m}\right)B\mathcal{D}_0}>&\frac{\beta_0 B\mu}{ \left(\mu+\left(\beta_0+\frac{q\gamma}{\eta}\right)\underline{m}\right)\mathcal{D}_0B}\cr
=&\frac{\beta_0B \mu}{\left(\mu +\frac{\mu(\beta_0-\mathcal{D}_0)}{\mathcal{D}_0}\right)B\mathcal{D}_0}=1.
\end{align*}
Thus, similar arguments as in {\bf Case 3} of the proof of Lemma \ref{lem-1-0-01} show that the algebraic equation
$$ 
\lambda- \left(\frac{\beta_0B}{\mu+(\lambda_1+\lambda_2+\lambda_3)\left(\beta_0+\frac{q\gamma}{\eta}\right)\underline{m}}\right)e^{-\lambda u}+\frac{\mathcal{D}_0B}{\mu}=0
$$  
has a unique positive root $\lambda_0>0$. 
Hence, 
the solution $\underline{I}(t)$ to the first-order linear delay differential equation
\begin{equation} \label{l-4-4}
\begin{cases}
\frac{d}{dt}\underline{I}=\left(\frac{\beta_0B}{\mu+(\lambda_1+\lambda_2+\lambda_3)\left(\beta_0+\frac{q\gamma}{\eta}\right)\underline{m}}\right)\underline{I}(t-u)-(\mu+d)\underline{I} & t>t_{3},\cr
\underline{I}(s)=I(s)>0 & s\in[t_3-u,t_3],
\end{cases}
\end{equation}
blows up exponentially as $t\to\infty$. From \eqref{l-4-5} and \eqref{l-4-4}, we conclude that 
$$ 
I(t)\ge \underline{I}(t)\quad \forall\ t\geq t_{3}. 
$$
As a result, we obtain that 
$$ 
\limsup_{t\to\infty}I(t)\ge \lim_{t\to\infty}\underline{I}(t)=\infty.
$$

This clearly contradicts \eqref{l-3}. Therefore, the statement of the Lemma must hold.
\end{proof}
We first remark that both inequalities \eqref{amz-1} and \eqref{l-2} complete the proof of \eqref{persistent-eq-1}. Thanks to Lemma \ref{weak-persistence} and the theory developed in \cite{Hal}, we can now conclude our next result on the uniform persistence of the disease whenever $\beta_0>\mathcal{D}_0$. We first introduce few terminologies.  Note that \eqref{e-0-1} generates a continuous  semiflow on the set 
$$ 
\tilde{\mathcal{X}}_u:=\{(S_0,S_1,S_2,Z_0,I_0(\cdot)) \in\mathcal{X}_u\ :\ I(s)>0\ \forall\ s\in[-u,0] \ \text{and}\ S_0>0\},
$$
which we denote by $\Phi_t$, i.e, $\Phi_t(S_0,S_1,S_2,Z_0,I_0(\cdot))=(S_0(t),S_1(t),S_2(t),Z(t),I_t(\cdot))$ for every $t\ge 0$ with $I_t(s):=I(t+s)$ for every $s\in[-u,0]$ and $t\ge 0$. We note from the Arzela-Ascoli's Theorem that $\Phi_{t}$ is compact for every $t>u$. For every $(S_0,S_1,S_2,Z_0,I_0(\cdot)) \in\tilde{\mathcal{X}}_u$, define the persistence function
\begin{equation}
    \rho((S_0,S_1,S_2,Z_0,I_0(\cdot)))=I_0(0) \quad \forall\ (S_0,S_1,S_2,Z_0,I_0(\cdot))\in\tilde{\mathcal{X}}_u.
\end{equation}
In particular,
$$ 
\rho(\Phi_t(S_0,S_1,S_2,Z_0,I_0(\cdot)))=I_t(0)=I(t) \quad \forall\ t\geq 0 \ \text{and}\ (S_0,S_1,S_2,Z_0,I_0(\cdot))\in\tilde{\mathcal{X}}_u.
$$
Recall that $I(t)>0$ for every $t>0$ whenever $I_0(0)>0$. Hence 
$$ 
\rho(\Phi_t(S_0,S_1,S_2,Z_0,I_0(\cdot)))>0 \quad \text{whenever}\ \rho(\Phi_0(S_0,S_1,S_2,Z_0,I_0(\cdot)))>0.
$$

\begin{proof}[Proof of Theorem \ref{tm-uniform-persistency}]
Recall that $ \Phi_t$ is compact for every $t>u$. We also recall that 
$$ 
\sum_{j=0}^2|S_j(t)|+\|I_t\|_{\infty}\leq \frac{2B}{\mu} \quad \text{and}\quad |Z(t)|\leq \frac{qB}{\eta\mu} , \quad \forall\ t\ge u, 
$$
which implies that $\Phi$ is pointwise dissipative and uniformy bounded for $t\ge u$. Therefore, 
it follows from \cite[Theorem 2.30 Page 41]{Hal} that \eqref{e-0-1} has a compact attractor $\mathcal{A}$ of $\tilde{\mathcal{X}}_u$. Next, observe that the map $\rho\circ\Phi$ is continuous, and  Lemma \ref{weak-persistence} guarantee that $\Phi$ is uniformly weak $\rho-$ persistent. Therefore, it follows from \cite[Theorem 5.2 Page 126]{Hal} that $\phi$ is uniformly  $\rho-$persistent, i.e, there exists $\overline{m}_u>0$ such that 
$$ 
\liminf_{t\to\infty}\rho(\Phi_t(S_0,S_1,S_2,Z_0,I_0(\cdot)))>\overline{m}_u\quad \forall \, (S_0,S_1,S_2,Z_0,I_0(\cdot))\in \tilde{\mathcal{X}}_u.
$$
This completes the proof of the Theorem since $\rho(\Phi_t(S_0,S_1,S_2,I_0,Z_0))=I(t)$ for every $t>0$ and $(S_0,S_1,S_2,Z_0,I_0(\cdot))\in\tilde{\mathcal{X}}_u$.
\end{proof}

\section{Numerical investigations}\label{sec3}
In this section we consider investigating the numerical studies related to the theoretical results outlined in previous sections.  We will consider  model~\eqref{e-0} with $u=3k$ for $k=0,1,2,3,4$. 

\subsection{Model and Data}\label{without_delay}

While any set of data related to compartment classes in our model could be used to illustrate numerical simulations, we decide to use the data from Uganda within the period 1992-2005.  Using such a real data  has the advantage of testing our mathematical model, and we can  estimate the different infection rates with respect to the time delay. As a result, we are able to confirm that the infection rates decrease as functions of the time delay. The data is reported in \cite{joshietal2008} and is given in Table \ref{tab:uganda_data}.
\begin{table}[htbp]
\caption{Data table}
    \centering
    \begin{tabular}{cccc}
    \hline
     Year & Susceptible & Infected & Information \\\hline
     1997 &             &          &  600/1,200    \\
     1999 & 6,700,000   & 606,000  &               \\
     2000 & 6,597,470   & 573,693  &  700/1,200    \\
     2001 & 7,130,000   & 383,000  &  717/1,200    \\
     2003 & 7,462,000   & 544,000  &               \\
     2005 & 7,636,000   & 548,261  &  778/1,200    \\\hline
    \end{tabular}
    \label{tab:uganda_data}
\end{table}

We recall that the data in Table~\ref{tab:uganda_data} was collected from UN, UNAIDS, and the Uganda AIDS Commission for population, which included death rates, percentage of adults (ages 15 to 59) per year, growth of the adult class, the adult prevalence rates, and the percentage of adult population infected. Moreover, subject matter experts, literature, and surveys were used to determine organizational estimates of the information and education campaigns involved, rates for interaction of $Z$ to susceptible classes, and efficacy of abstinence and faithfulness and the use of condom behavior. We emphasize that our model and the data collected considers only adult individuals ages 15 to 59.

It is important to note that our model~\eqref{e-0} differs from the model considered by the authors in~\cite{joshietal2008} in the sense that in~\cite{joshietal2008}, the logistic growth was used for the $Z$ class with a growth coefficient that increases with the number of infectives and is multiplied by a ratio of the infected to the living population. Our model assumes that the growth of information is proportional to the number of infectives. Moreover, the $R$ class in~\cite{joshietal2008} only collects the number of people that die from the disease, while in our case the $R$ population represents the number of susceptibles that will not contract the disease.  Furthermore, the model in~\cite{joshietal2008} is a system of ordinary differential equations while our model is a system of delay differential equations. Finally, no mathematical analysis was provided in \cite{joshietal2008}.

The authors in~\cite{joshietal2008} calculated values of the parameters $B$, $\gamma_1$, $\gamma_2$, $d$, and the general or natural death rate $\mu$. Also, it was assumed that $\beta_{1,u}$ and $\beta_{2,u}$ are directly proportional to $\beta_{0,u}$, that is, $\beta_{1,u} = 0.05\beta_{0,u}$ and $\beta_{2,u} = 0.40\beta_{0,u}$. They then use parameter estimation to fit the model to the data in order to estimate together $\beta_{0,u}$ and the logistic growth rate for the $Z$ class.

\subsection{Parameter estimation}\label{estimation}
Since the data concerns only adult individuals ages 15 to 59, we use $\mu = \frac{1}{59-15} = 0.0227$ in our model. We choose to fix all  remaining parameters as outlined in the previous section but we decide to fit the model~\eqref{e-0} to the data in Table~\ref{tab:uganda_data} in order to estimate the parameters $\beta_{0,u}$, $\eta$, $\gamma_0$, and $q$. We recall the values of the fixed parameters in  Table~\ref{tab:fixed_parameters}.  

\begin{table}[htbp]
\caption{Values of fixed parameters in model~\eqref{e-0}}
    \centering
    \begin{tabular}{ccccccc}\hline
    B  &  $ \beta_{1,u} $ & $\beta_{2,u}$  & $d$ & $ \gamma_1$ & $ \gamma_2$ &  $\mu$ \\\hline
    0.55 &  $0.05\beta_{0,u}$ & $0.40\beta_{0,u}$ & 0.14  & 0.1  & 0.8    & 0.0227 \\\hline   
    \end{tabular}
    \label{tab:fixed_parameters}
\end{table}

In order to fit the model to the data, we have to determine the initial conditions for the system of differential equations. Now, it is known that the information and educational campaigns against HIV/AIDS in Uganda started in 1992. For this reason, we will start our model with 1992. We will assume that initially no one in the population is practicing abstinence and faithfulness or the use of condom behavior, or any type of information related to HIV/AIDS. This means that we have $S_1(0) = 0$, $S_2(0) = 0$, and $R(0) = 0$. 

According to the UN population reports, there were 18.38 million individuals in Uganda in 1992, of which $32.1\%$ represented the adult population ages 15 to 59. This allows us to determine that there were 5,899,980 adult individuals for ($S_0(0) + I(0)$) class for that year. At the same time, it is also reported that the adult prevalence rate was about $15\%$, giving us 884,997 infected adult individuals initially. That is, $I(0) = 884,997$. Thus, we conclude that there were 5,014,983 susceptible adult individuals initially for that year. That is, $S_0(0) = 5,014,983$. For $Z(0)$, the authors in~\cite{joshietal2008} estimated that initially there were 240 organizations out of 1,200 (that is, $20\%$) that were involved in the abstinence and faithfulness, and the use of condom behaviors. Hence, $Z(0) = 0.20$.

Next, it is important to choose the unit under which we will run the numerical simulations. Indeed, if population numbers are too large or too small, the rounding error may affect the efficiency of the results. To this end, we choose to work with the unit of a millionth. We can summarized the initial conditions as follows
\begin{equation*}
(S_0(0), S_1(0),S_2(0),I(0),Z(0),R(0))^T =( 5.014983, 0,0, 0.884997, 0.20, 0)^T.
\end{equation*}

We use MATLAB to run the numerical simulations. We develop and implement an algorithm to fit our model to the data. Specifically, for the delay differential system~\eqref{e-0}, our program creates three nested functions within a main function. One nested function specifies the history of the solution $(S_0(t), S_1(t), S_2(t), I(t), Z(t), R(t))^T$ for $t\le 0$. In our case we use the initial conditions for this history. The other nested function implements the delay system to evaluate the right side of the differential equations~\eqref{e-0}. Finally, a third nested function uses the delay differential equations (DDE) solver {\it dde23} to solve the system~\eqref{e-0}, then calculates and returns the relative sum of squared errors (SSE) between the data in Table~\ref{tab:uganda_data} and the model solution~\eqref{e-0}.

On the other hand, we will also be interested in the solution of system~\eqref{e-0} 
  with $u=0$, i.e., there is no delay. In this case, our program includes two nested functions within a main function. One nested function uses the ODE solver \textit{ode45} to implement and solve the system~\eqref{e-0} with $u = 0$. The other nested function receives as input arguments the parameters to be estimated and time $t$, then calculates and returns the relative sum of squared errors (SSE) between the data in Table~\ref{tab:uganda_data} and the model solution~\eqref{e-0} with $u = 0$. The relative SSE is calculated as follows:
\begin{align*}
    \text{Model error $(\beta_{0,0},\eta,\gamma_0,q)$} &= \sum_j\left[\frac{|\text{Model$(S_0 + S_1 + S_2)_j$} - \text{Data$(S)_j$}|^2}{|\text{Data$(S)_j$}|} + \frac{|\text{Model$(I)_j$} - \text{Data$(I)_j$}|^2}{|\text{Data$(I)_j$}|}\right] \\
    & + \sum_k\left[ \frac{|\text{Model$(Z)_k$} - \text{Data$(Z)_k$}|^2}{|\text{Data$(Z)_k$}|}\right], 
\end{align*}
where $j = 1999, 2000, 2001, 2003, 2005$ and $k = 1997, 2000, 2001, 2005$.

Moreover, within the main functions for both the model~\eqref{e-0} with $u>0$ and the same model with $u=0$, we use the MATLAB nonlinear programming solver \textit{fminsearch}, which finds minimum of unconstrained multi-variable function using derivative-free method. In our case, \textit{fminsearch} receives two input arguments, a handle to the nested error function and the initial guesses of the parameters to be estimated, then it returns the values of the fitted parameters. 

We also emphasize that in all of our simulations, we use mass action incidence terms like $\beta_{0,u}S_0(t)I(t)$. The standard incidence terms like $\beta_{0,u}S_0(t)I(t)/N$, where $N$ is the total population didn't seem to provide a good fit between data and the model solution.

Now, obtaining pre-estimates or initial guesses of the parameters to be determined is not an easy task. In order to obtain ``good" pre-estimates, we reached out to Wolfram Mathematica's \textit{Manipulate} command. This command allows, after setting up the model~\eqref{e-0} with quantities to be estimated treated as parameters, to plot both the data and the model solutions on the same graph. Then ``manipulate" them until an agreement between the data and the model solutions is achieved. We then select the parameter values for which this agreement is made and take them as initial guesses. 




\subsection{Numerical Results}\label{numerics}
We will now investigate the numerical behavior of the model~\eqref{e-0}. 
To demonstrate the importance of the information $Z$, we will first describe model~\eqref{e-0} without such information, i.e., with $Z = 0$. In this case the system reduces into the simplest HIV epidemic model, the following SI model 
\begin{equation}\label{no-Z}
\begin{cases}
\frac{d}{dt} S_{0}(t)=  B -\beta_{0,u} S_{0}(t) I(t-u)  -\mu S_{0}(t),\cr
\frac{d}{dt} I(t)=\beta_{0,u} S_{0}(t) I(t-u)  -(\mu + d) I(t).
\end{cases}
\end{equation}

It is important to note that the model~\eqref{no-Z} does not fit HIV data well. This could possibly be explained from the fact that for a simple SI model, the time spent in the infectious stage decreases exponentially, which is an unrealistic assumption for HIV, for which the infectious stage is long and the duration is affected by several variations. See for instance,~\cite[page 131]{Maia2015}. For this reason we will not fit the model~\eqref{no-Z} to the data. However, for the purpose of illustration to show how useful information campaigns are, we will provide numerical solutions to model~\eqref{no-Z}. To this end, we use parameter values for $B$, $d$, and $\mu$ in Table~\ref{tab:fixed_parameters}. For the choice of $\beta_{0,u}$, we have to make sure that neither the $I$ class nor the $S_0$ class fits their respective data. For if this happens, it will result in favoring one class over the other. Thus, $\beta_{0,u}= 0.023549$ as in Table~\ref{tab:parameterDelay6} is an interesting choice. Also, we choose to run the model with the time delay $u = 6$. The graphs of the two classes of the model together with their data are given in the first two figure windows in Figure~\ref{fig:no_Z}. The last figure window provides only the graphs of the two classes of the model~\eqref{no-Z} plotted together.
\begin{figure}[hbtp]    
\centering    
\includegraphics[width=1\textwidth,height=.25\textheight]{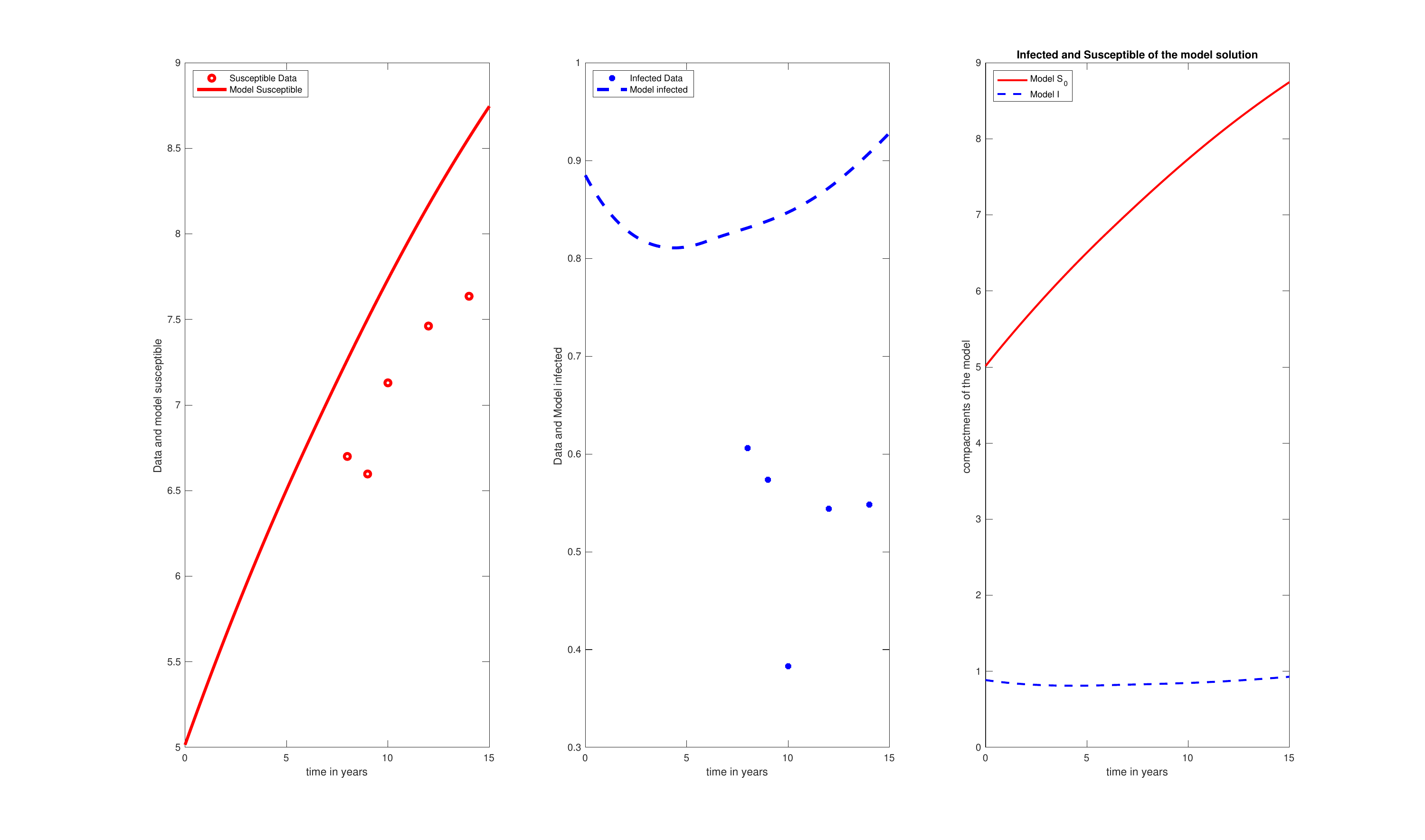}
\caption{The graphs of the infected and susceptible model solutions and their data for model~\eqref{no-Z} calculated with parameter values from Table~\ref{tab:fixed_parameters} and $\beta_{0,6}$ in Table~\ref{tab:parameterDelay6}.}   
\label{fig:no_Z}
\end{figure}
Clearly, we can observe that in the absence of information, more and more people joins the at-risk population, and as a consequence, the number of infected individuals increases.

 Now, we will fit the model~\eqref{e-0} to the data in Table~\ref{tab:uganda_data} for one time delay value $u = 6$ and obtain the resulting fitted model solutions over the interval $[0,15]$ representing the years from 1992 to 2006. We use the Wolfram Mathematica's \textit{Manipulate} command to obtain the initial guesses of the parameters. The reader interested in this valuable process can learn from our code and the accompanying graph in Figure~\ref{fig:fithivddeDelay6}.
\begin{figure}[hbtp]    
\centering    
\includegraphics[width=.85\textwidth,height=.3\textheight]{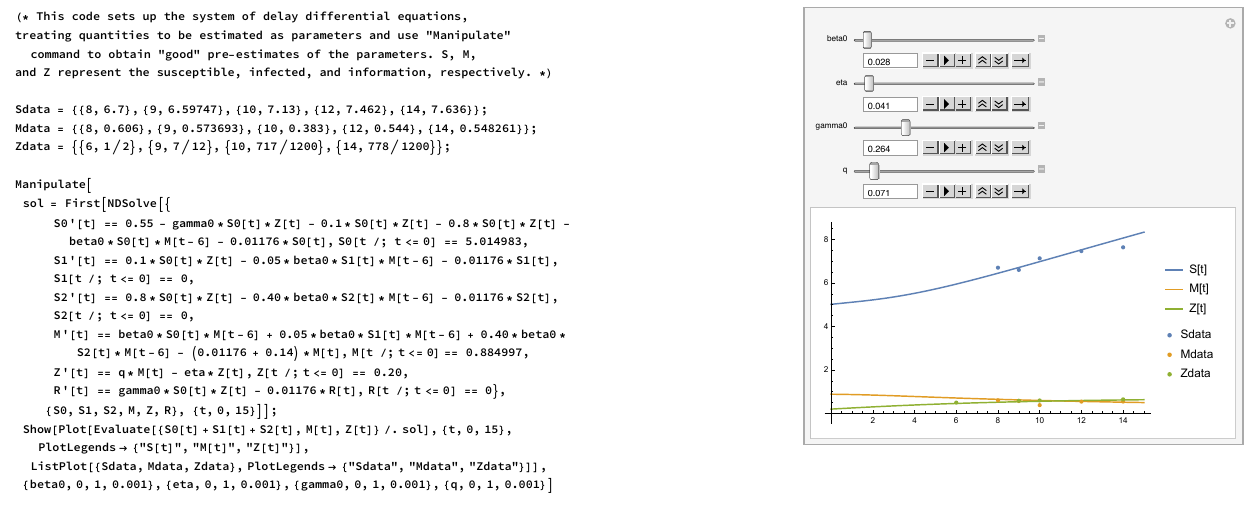}
\caption{Wolfram Mathematica code using the {\it Manipulate} Command to pre-estimate parameters with delay $u = 6$ in model~\eqref{e-0}.}   
\label{fig:fithivddeDelay6}
\end{figure}

We obtain the initial guesses $\beta_{0,6} = 0.028$, $\eta = 0.041$, $\gamma_0 = 0.264$, and $q = 0.071$.
After running our program with these initial guesses, we obtain the estimates and their resulting SSE error given in Table~\ref{tab:parameterDelay6}.
\begin{table}[htbp]
    \centering
    \caption{Estimated parameters with corresponding SSE error for delay $u = 6$}
    \begin{tabular}{cccccc}\hline
    Delay $u$ & $\beta_{0,6}$ & $\eta$   & $\gamma_0$ & $q$      & SSE error \\\hline
    6 & 0.023549  & 0.088131 & 0.141355  & 0.128940   & 1.195502 \\\hline
    \end{tabular}
    \label{tab:parameterDelay6}
\end{table}
Figure~\ref{fig:fitalldelay6} shows the graphs of the fit between the data and the model solutions with $u = 6$, where, by the total susceptible population we mean the sum $S_0 + S_1 + S_2$.
\begin{figure}[hbtp] 
\centering    
\includegraphics[width=.8\textwidth,height=.3\textheight]{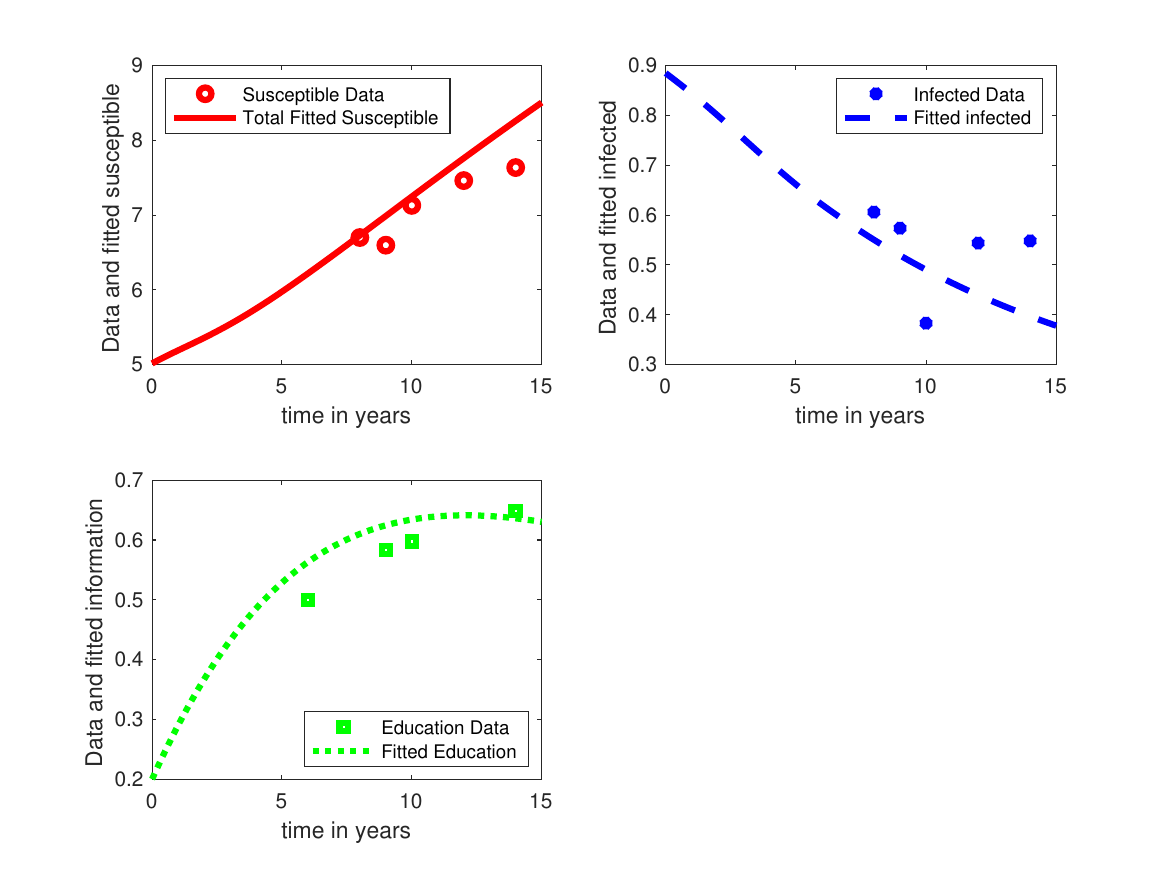}
\caption{Plots of the graphs of the fit between the model solutions and the data with time delay $u = 6$ and total susceptible population $S_0+S_1+S_2$.}
\label{fig:fitalldelay6} 
\end{figure}

To see the effect of information campaigns on the control of the disease, we also plot in Figure~\ref{fig:allgroupsDelay6} the graphs of the solutions for all subgroups of the model obtained with time delay $u = 6$. Note that this choice of $u=6$ is arbitrary and it has no biological meaning. In fact the simulations are similar when using different time delay. 
\begin{figure}[hbtp]    
\centering    
\includegraphics[width=.7\textwidth,height=.3\textheight]{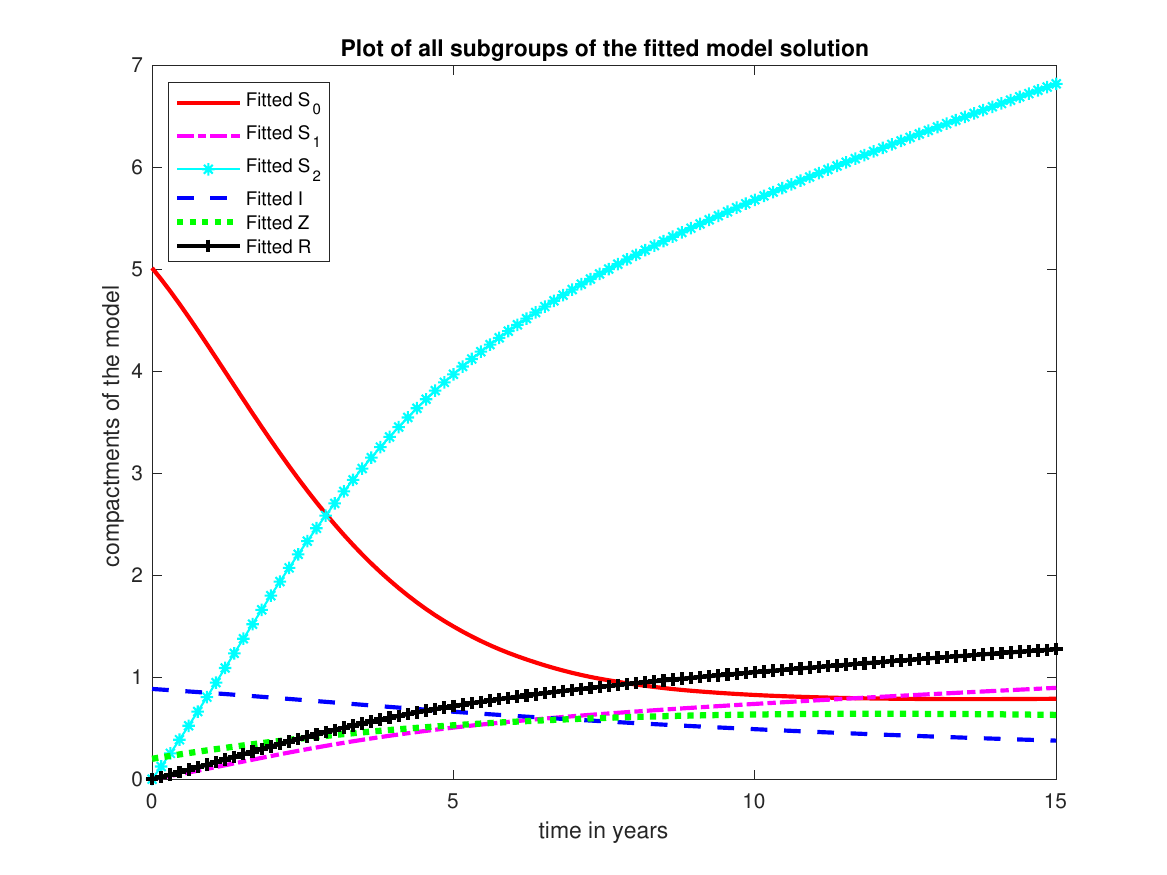}
\caption{The fitted model with each of its population subgroup obtained with delay $u = 6$.}   
\label{fig:allgroupsDelay6}
\end{figure}

The numerical results in Figure~\ref{fig:allgroupsDelay6} suggest that thanks to the information about the disease, the number $S_0$ of general susceptible people that has not yet changed their behavior before information decreases from 5 million to about a million once they are aware of the information. At the same time, once they acquire knowledge about the disease, 
 the numerical simulations show that more people will tend to use condoms than practicing abstinence and faithfulness. 
  As a consequence, the number of infected people decreases considerably. The fitted model here also shows that the number of removed people will surpass the number of susceptible practicing abstinence and faithfulness but will be smaller compared to the condom users. In light of this, it is clear that the information $Z$ plays a central role in decreasing the number of ignorant susceptible and the infected within this population in contrast to the numerical results from the last figure window in Figure~\ref{fig:no_Z} where there was no information education within the population.  

To understand the effect of the delay on the number of infected individuals, we use two approaches.
We will fix the estimated parameter values obtained by fitting the model to the data with $u = 6$ to obtain the model solutions for different other chosen time delay values and then look at the large time behavior of solutions.  So, in this first step we are assuming that the infection rates are constant with respect to the time delay.  Next, we  will fit the model for different time delays $u=3k$, $k=0,1,2,3,4$, and compare the infections rates $\beta_{0,u}$ obtained. This will help to check whether the rates decrease with the time delay. 

For the first approach, we fix the estimated parameter values obtained from fitting the model to the data over the interval $[0,15]$ with the time delay $u = 6$ as reported in Table~\ref{tab:parameterDelay6} and run the model solutions for the time delays $u = 0$, $u = 3$, $u = 6$, $u = 9$, and $u = 12$ with these estimated parameter values without doing another parameter estimation  over the time interval $[0,400]$. This guarantees that we have the same set of parameters for all chosen delays. We then compare the corresponding infected classes. The results are plotted in Figure~\ref{fig:bigtimeInfectedAllDelayNs}. 

\begin{figure}[hbtp]    
\centering    
\includegraphics[width=.7\textwidth,height=.3\textheight]{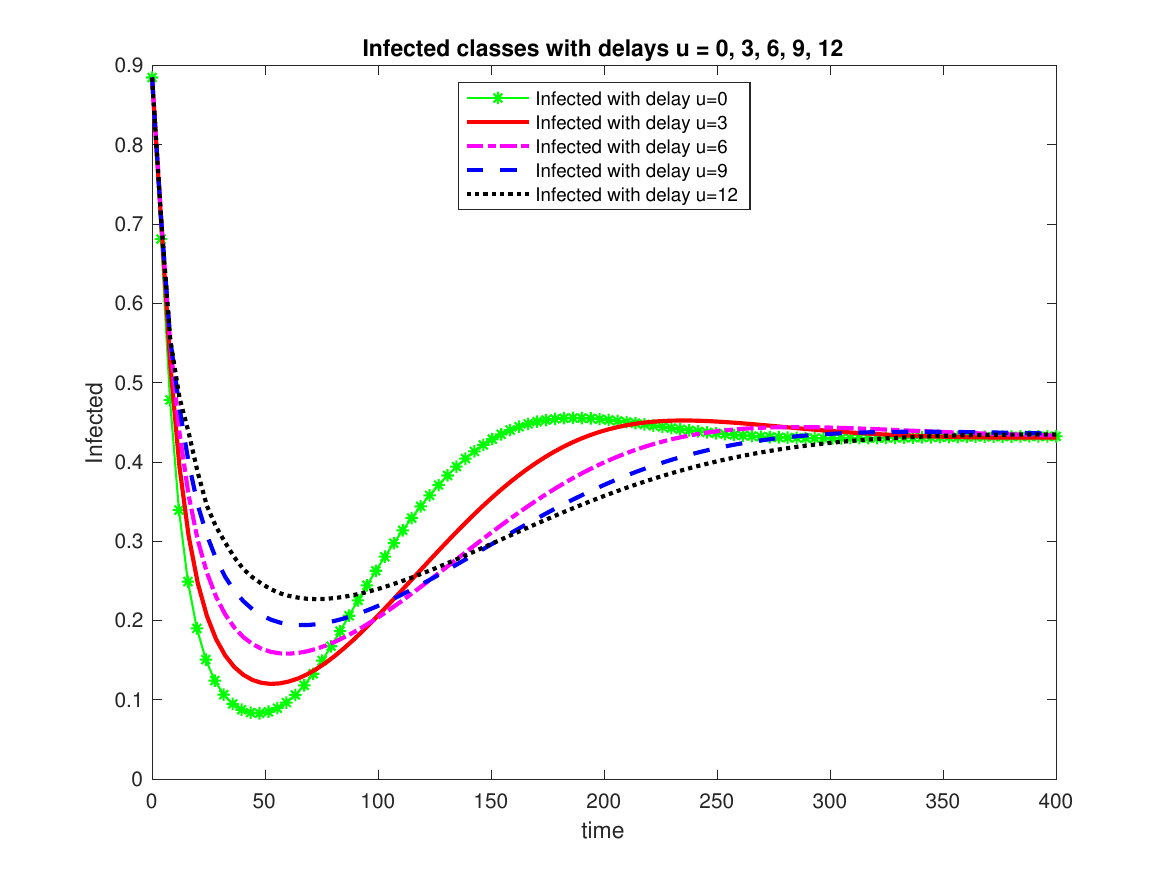}
\caption{The infected model solutions for the time delays $u = 0,3,6, 9,12$ over  $[0,400]$ obtained with estimated parameters from fitting the model to the data with delay $u = 6$ over $[0,15]$.}   
\label{fig:bigtimeInfectedAllDelayNs}
\end{figure}

 As noted earlier, the numerical simulations in Figure~\ref{fig:bigtimeInfectedAllDelayNs} are conducted with the same transmission rates $\beta_j=\beta_{j,u}, j=0,1,2$, for all delays. In other words, the endemic equilibrium of the system in this case is independent of the delay $u$. The simulations show that, irrespective of the time delay, the solutions converge to the unique endemic equilibrium in the long run. In particular all infected groups for all considered time delays converge to the infected component  of  the endemic equilibrium in the long run. Furthermore, it is clear from Figure~\ref{fig:bigtimeInfectedAllDelayNs} that the larger the delay the longer it takes for the infected population to stabilize at the endemic equilibrium. In other words, the infected group with smaller time delay converges faster to the unique infected component $I^*$ of the endemic equilibrium as given in Figure~\ref{fig:Istar}. This emphasizes one more time the importance of the time delay.

In the second approach, we try to understand the effect of the time delay on the infection rates and check whether our standing assumption on the non-increasing of rates with respect to the delay holds. We follow the procedures outlined in Section~\ref{without_delay} and Section~\ref{estimation} and fit the model to the data over the time interval $[0,15]$ for each delay $u=3k$, $k=0,1,2,3,4$. The initial guesses used for all delays are the same as reported in Figure~\ref{fig:fithivddeDelay6}. At the same time, we also record the effective rate of education dissemination $\tau = q/\eta$, the corresponding SSE error, and the corresponding basic reproductive number $\mathcal{R}_{0,u}$ in Table~\ref{tab:differentdelays}.
\begin{table}[htbp]
\caption{Estimated parameters with their corresponding errors for different delay values}
    \centering
    \begin{tabular}{cccccccc}\hline
    Delay $u$ & $\beta_{0,u}$ & $\eta$   & $\gamma_0$ & $q$      & $\tau$    & SSE error & $\mathcal{R}_{0,u}$ \\\hline
    0         & 0.028391      & 0.069813 & 0.138839   & 0.112060 & 1.605134  & 1.272334      & 4.222200 \\ \hline
    3         & 0.025545      & 0.079767 & 0.140867   & 0.120447 & 1.509985  & 1.239273      & 3.798930 \\\hline 
    6         & 0.023549      & 0.088131 & 0.141355   & 0.128940 & 1.463605  & 1.195502      & 3.502159 \\\hline
    9         & 0.022223      & 0.094835 & 0.141984   & 0.136189 & 1.435820  & 1.150862      & 3.304834 \\\hline
    12        & 0.021369      & 0.098370 & 0.142581   & 0.140660 & 1.429794  & 1.120674      & 3.177886 \\\hline 
    \end{tabular}
    \label{tab:differentdelays}
\end{table}

 As expected, we see from Table \ref{tab:differentdelays} that the infection rates decrease with respect to the time delay. Consequently, the basic reproductive number $\mathcal{R}_{0,u}$ and the infection rate $\beta_{0,u}$ decrease with $u$. Specifically, the numerical simulations suggest that it is possible to save more lives if there is a way to increase the time delay between the initial infection of cells by HIV and when a cell becomes infectious.
Furthermore, we expect that the effective education dissemination
rate $\tau:=\frac{q}{\eta}$ should decrease as the time delay increases. This is in fact confirmed by the estimates values for $\tau$ obtained in Table \ref{tab:differentdelays}. Therefore, the numerical results  illustrate our theoretical results.   

 We conclude this section with an illustration of Theorem~\ref{tm-existence-of-EE} on the existence and uniqueness of the endemic equilibrium, Theorem~\ref{prop-2} on the asymptotic behavior of $I^*$ with respect to $\tau = \frac{q}{\eta}$,  Theorem~\ref{prop-3} on the asymptotic behavior of $(S_0^*,S_1^*,S_2^*)$ with respect to $\tau=\frac{q}{\eta}$, and Theorem~\ref{tm-uniform-persistency} about the permanence of the disease. Using the values of the model parameters described in Table~\ref{tab:fixed_parameters} and in  Table~\ref{tab:parameterDelay6}, we have that
\[
\mathcal{D}_0 = \frac{\mu(\mu + d)}{B} = 0.0067, \quad \frac{1}{\gamma}\sum_{j=1}^2\beta_{j,u}\gamma_j = 0.0073.
\]
Clearly, we have that $\beta_{0,6}>\mathcal{D}_0$ and $\mathcal{D}_0<\frac{1}{\gamma}\sum_{j=1}^2\beta_{j,u}\gamma_j$. Therefore:
\begin{itemize}
    \item Theorem~\ref{tm-existence-of-EE} guarantees the existence and uniqueness of the endemic equilibrium ${\bf E^*} = (S_0^*, S_1^*, S_2^*, Z^*, I^*)^T = (0.7954, 2.1648, 15.0162, 0.6324, 0.4322)^T$, where $S_0^*, S_1^*, S_2^*,Z^*$ are given by~\eqref{equilibrium-def} with $\tilde{I} = I^*$ and $I^*$ is the unique positive solution of Equation~\eqref{I-star}.
    \item Theorem~\ref{prop-2} $(i)$ implies that $I^*$ is non-increasing  with respect to $\tau=\frac{q}{\eta}$ and 
  \[
     \frac{B(\beta_{0,u}-\mathcal{D}_0)}{(\mu+d)(\beta_{0,u}+\frac{q\gamma}{\eta})} = 0.0368 \leq I^*  \le \min\{17.2898, 3.3799\} =  \min\left\{\frac{B(\beta_{0,u}-\mathcal{D}_0)}{\mu\beta_{0,u}},\frac{\mu}{\mathcal{D}_0}\right\}.
  \]
  \item Theorem~\ref{prop-2} $(iii)$ implies that
 \[
     \lim_{\frac{q}{\eta}\to\infty}I^*=I^*_{\infty} = 0.2276,
 \]
 where $I^*_\infty$ is the unique positive solution of the equation $ 
 \mathcal{D}_0=\frac{\mu}{\gamma}\sum_{j=1}^2\frac{\beta_{j,u}\gamma_j}{\mu+\beta_{j,u}I^*_\infty}$. 
 \item Theorem~\ref{prop-3} $(iii)$ implies that 
 \[
\lim_{\frac{q}{\eta}\to\infty}(S_0^*,S_1^*,S_2^*)=\left(0,\frac{\gamma_1B}{\gamma(\mu+\beta_{1,u} I^*_{\infty})},\frac{\gamma_2B}{\gamma(\mu+\beta_{2,u}I^*_\infty)}\right) = (0, 2.2968, 16.9882).
 \]
 
    \item The hypotheses of Theorem~\ref{tm-uniform-persistency} are fulfilled and inequality~\eqref{persistent-eq-1} is satisfied and becomes
    \[
    \frac{\mu(\beta_{0,6} - \mathcal{D}_0)}{\mathcal{D}_0\left(\beta_{0,6} + \frac{q}{\eta}\gamma\right)} = 0.0368\le\limsup_{t\to\infty}I(t)\le 17.2898= \frac{(\mu + d)(\beta_{0,6} - \mathcal{D}_0)}{\mathcal{D}_0\beta_{0,6}},
    \]
    confirming one more time that, the disease will be permanent within this population.
\end{itemize}

\section{Discussion/Conclusion} 
 A close look at the results established in this paper suggests several directions to consider in the future. First, note that Theorem~\ref{Th-5} provides the linear stability of the endemic equilibrium for the model without delay under the condition stated there. However, it will be interesting to establish this stability for the model with delay. In particular, constructing a Lyapunov function for the endemic equilibrium solution to establish global stability would be valuable.
 
 Recalling that the time delay $u>0$ accounts for the intracellular time delay between the initial infection of cells and when a cell becomes infectious, it is natural to suppose that $\beta_{0,u}$ decreases in $u$ and that $\mathlarger{\lim_{u\to\infty}\beta_{0,u}}=0$. In addition, due to practical application of the model, we can also suppose that $\beta_{0,u}>\max\{\beta_{1,u},\beta_{2,u}\}$ for every $u\ge 0$. Under such hypotheses it then becomes natural to take the time delay $u$ as the extra bifurcation parameter in the model. In this case, by Theorem \ref{tm-stability-of-DFE}, we note that the disease-free equilibrium  ${\bf E}^0$ is the global attractor for solutions of \eqref{e-0-1}  for every $u\ge 0$ if $\beta_{0,0}\leq \mathcal{D}_0$. Now, let us suppose that $\beta_{0,0}>\mathcal{D}_0$. Then there is a unique $u^*>0$ such that $\beta_{0,u}>\mathcal{D}_0$ for every $u\in[0,u^*)$ and $\beta_{0,u}\leq \mathcal{D}_0$ for every $u\ge u^*$. As a consequence of Theorem \ref{tm-stability-of-DFE} we conclude that ${\bf E}^0$ is the global attractor for solutions of \eqref{e-0-1} for every $u\ge u^*$. Moreover by Theorem \ref{tm-existence-of-EE}, for every $u\in[0,u^*)$, \eqref{e-0-1} has a unique endemic equilibrium ${\bf E}^*(u)$. As a result, $u=u^*$ is a bifurcation point. It is then interesting to know whether the endemic equilibrium ${\bf E}^*(u)$ is stable for every $u\in[0,u^*)$. 

Another important aspect to consider is the notion of spreading speeds of the disease. One way to study this problem  is through the existence of  traveling  wave solutions with  a certain speed $c$ and connecting the disease-free equilibrium and the endemic equilibrium solutions. We plan to study this question and how the time delay affects the spreading speeds in our future work.

\section*{Acknowledgment} 
The authors are grateful to the anonymous referees for the valuable comments and suggestions.

\bibliographystyle{plain}
\bibliography{ref}
\end{document}